\newtheorem{theorem}{Theorem}[section]
\newtheorem{lemma}[theorem]{Lemma}
\newtheorem{definition}[theorem]{Definition}
\newtheorem{remark}{Remark}
\numberwithin{equation}{section}\allowdisplaybreaks
\begin{document}

\title{\large\bf  The propagation of regularity and dispersive  blow-up phenomenon to higher-order generalized KdV equations}
\author{\normalsize Minjie  Shan$^{a,*}$\\
\footnotesize
\it $^a$ College of Science, Minzu University of China, Beijing 100081, P.R. China. \\  \\ \footnotesize
\it E-mail:  smj@muc.edu.cn   \\
}
\date{} \maketitle
\thispagestyle{empty}

\begin{abstract}
Some special properties of smoothness and  singularity concerning to the initial value problem associated with higher-order generalized KdV equations are investigated. On one hand, we show the propagation of regularity phenomena. More precisely,
the regularity of initial data on the right-hand side of the real line is propagated to the left-hand side with infinite speed under the higher-order KdV flow. On the other hand, we show that the dispersive blow-up phenomenon will occur by constructing a class of smoothing initial data  such that global solutions with the given initial data keep smooth at positive generic irrational times, while  global solutions display singularity  at each time-space positive rational point. The blow-up phenomenon is exclusively caused by the linear part of solutions due to the focusing of short or long waves. 
\\
{\bf Keywords:}  Higher-order generalized KdV, dispersive blow-up, propagation of regularity, weighted Sobolev spaces 
 \\
{\bf MSC 2020:}  primary 35Q53; secondary 35B44 
\end{abstract}
\section{Introduction}
This paper is concerned with the initial value problem (IVP) associated to higher-order $k$-generalized KdV equations
\begin{equation}
	\left\{
	\begin{aligned}
		&\partial_{t}u +\partial_{x}^{2j+1} u +u^k\partial^j_{x} u = 0, \quad  j, k\in\mathbb{N}^+,\\
		&u(0,x)=u_0(x),\ \ \ x\in \mathbb{R}, \ t\in\mathbb{R} \label{hKdV} \\
	\end{aligned}
	\right.
\end{equation}
which is a particular case of the class of IVPs
\begin{equation}
	\left\{
	\begin{aligned}
		&\partial_{t}u +\partial_{x}^{2j+1} u +Q(u,\partial_{x} u, \cdots, \partial_{x}^{2j} u) = 0, \quad  j\in\mathbb{N}^+,\\
		&u(0,x)=u_0(x),\ \ \ x\in \mathbb{R}, \ t\in\mathbb{R}  \label{OhKdV} \\
	\end{aligned}
	\right.
\end{equation}
where
$Q: \mathbb{R}^{2j+1}\to \mathbb{R}$
 is a polynomial without constant or linear terms.   Lax \cite{Lax1965} first introduced \eqref{OhKdV} to generalize the KdV hierarchy. \eqref{OhKdV} is also a common higher-order models arising in water waves problems, elastic media with microstructure and in other physical problems \cite{KO92}.

 Kenig, Ponce and Vega  \cite{KPV94Hi1}  proved that  \eqref{OhKdV} is well-posed in weighted Sobolev spaces for small initial data by taking advantage of local smoothing effects associated to the unitary group of the linear equation. Later, utilizing  several sharp estimates for solutions of the associated linear problem and a change of dependent variable, the smallness assumption on the initial data was removed in \cite{KPV94Hi2}.  By using weighted Besov spaces, Pilod \cite{Pilod08Hi}  refined well-posedness  results for \eqref{OhKdV} with a special class of  nonlinearity and small initial data.

It was Kato who first studied well-posedness for the KdV equation (with $j=k=1$ in \eqref{hKdV}) in weighted Sobolev spaces 
$$
Z_{s,r}=H^s(\mathbb{R})\cap L^2\big(|x|^{2r}dx\big)
$$
where $s,r\in \mathbb{R}$. Kato \cite{Kato83} showed that persistent properties hold for solutions to the KdV equation for any $r\in \mathbb{N}^+$ and $s\geq 2r$. The notion of persistence properties is if the initial data locate in $Z_{s,r}$, then the associated IVP is
locally or globally well-posed (which means that solutions keep in $Z_{s,r}$ for any time $t$). In \cite{Kato83},  Kato used the following commutative property of operators
$$\Gamma=x-3t\partial_x^2, \hspace{5mm} \mathcal{L}=\partial_t+\partial_x^3, \hspace{5mm} [\Gamma, \mathcal{L}]=0$$
which deduces that
	\begin{align}
		xU(t)v_0= U(t)(xv_0)+3tU(t)(\partial^2_x v_0)\nonumber  
	\end{align}
where $U(t)$ is the unitary operator semigroup for the linear KdV equation. From the identity  above, one can easily see that the regularity of solutions to the KdV equation is twice
the decay rate of the solution. Kato's result was improved in \cite{Nahas12,FLP15} to $Z_{s,r}$ with $r>0$, $s\geq 2r$ and $s>\max\{s_k, 0\}$ where $s_k$ is the critical indicator of well-posedness for KdV in Sobolev spaces (for further details see \cite{KPV1993} and references therein).  The hypothesis $s\geq 2r$ is necessary \cite{ILP13}. In other word, if $u_0\in Z_{s,r}$ with $2r>s$, then the solution  $u(t)$ stays only in $ Z_{s,s}$ at any time $t\neq 0$ which means that the extra decay $2r-s$ is not preserved by the solution flow.

Why weighted Sobolev spaces is used?  In fact, to complete the local smoothing estimate, a maximal (in time) function estimate is needed in $L^1_x$. It was observed in \cite{KPV1993}  that the $L^1_x$-maximal function estimate fails without weight. 

As mentioned above, weighted Sobolev space is an useful tool to study regularity and decay properties of solutions to the IVPs for dispersive equations. Next, we are concerned with  a special type of regularity properties for dispersive equations. 

In \cite{ILP15}, the propagation of regularity phenomena for $k$-generalized KdV equations are described. To be specific, if initial data $u_0\in  H^{3/4+}$ and $u_0\in H^{l}((b, \infty))$ for some $l\in \mathbb{Z}^+$ (which is later extended to $l>3/4$ in \cite{KLPV18}) and $b\in\mathbb{R}$, then corresponding  solutions $u(t,x)$ are in $H^{l}((\beta, \infty))$ for any $\beta\in\mathbb{R}$ and any $t\in (0, T)$ where $T$ is the maximum existence interval.  This result indicates that  the propagation of regularity in the right hand side of the data  moves to its left with infinite speed  as time evolves. Moreover, by using weighted Sobolev spaces, it was also showed in \cite{ILP15} that corresponding solutions to $k$-generalized KdV equations possess  some persistence properties and regularity effects for positive times if the initial data $u_0\in  H^{3/4+}$  have polynomial decay in the positive real line. Subsequently, analogical properties are established for the Benjamin-Ono equation with negative dispersion \cite{ILP16a}, the dispersive generalized Benjamin-Ono  equation \cite{Mendez20a}, the fractional KdV equation  \cite{Mendez20},  the Benjamin equation \cite{GuoQin18}, the fifth-order dispersive equation  \cite{SeSm15}, the Kadomtsev-Petviashvili equation \cite{ILP16b}, the Zakharov-Kuznetsov(ZK) equation  \cite{LP18ZK, Mendez24} and the intermediate long-wave equation  \cite{MPS19}.

Relative to regularity, singularity of solutions  has aroused intense interest. Next, let us recall the dispersive blow-up phenomenon for dispersive models. Dispersive singularity was first raised for solutions to the linear KdV equation by Benjamin, Bona and Mahony \cite{1972_benjamin}. From a physics perspective, it is a type of focusing phenomenon which describes propagating waves with different speed might present strange singularities by gathering somewhere. Mathematically, the regularity for solutions to dispersive equations with smooth initial value is destroyed at some points in time-space. Dispersive blow-up phenomena widely exist in various dispersive models, such as KdV, gKdV equations \cite{1993_Bona_Dispersive_gKdV}, Schr\"odinger equations \cite{2010_Bona_dispersive_Schroinger, 2016_Hong_dispersive_NLS}. By using the smoothing effect properties, Linares and Scialom \cite{1993_LinaScia} showed dispersive blow-up for the nonlinear generalized KdV equation. Dispersive blow-up for the KdV equation was shown by Linares, Ponce and  Smith \cite{2017_Linares} via taking advantage of fractional weighted spaces. Similar results were obtained for the ZK equation in two dimensional case \cite{2020_Linares_blowupforZK} and in three dimensional case \cite{BS2024}, and for the Schr\"odinger–KdV system \cite{2019_LinaPala}. The main idea to show dispersive blow-up  is that the Duhamel term associated to solutions is smoother than the linear evolution component. To achieve this, the smoothing effects and 
weighted Sobolev spaces are combined elaborately to gain more regularity.

The purpose of this article is threefold. Firstly, we  show that persistent properties hold for solutions to the IVP \eqref{hKdV} in weighted Sobolev spaces.  Secondly, we investigate the propagation of regularity and decay of solutions. Lastly, it is  proved that the dispersive blow-up solutions exist for higher-order generalized KdV equations.

Now we state the main results. The first one is about persistence properties of solutions. This well-posedness result in weighted Sobolev space will be used to establish dispersive blow-up for higher-order generalized KdV equations later.

\begin{theorem} \label{lwp}
	Let $s\geq j+1/2$, $r\in(0, 1)$ and $s>2jr$. Assume that $u_0\in Z_{s,r}$, then there exists a positive time $T=T(\|u_0\|_{Z_{s,r}})>0$, such that \eqref{hKdV} with $k=1$ has a unique solution
	$$ u(t,x)\in C\big([0,T]; Z_{s,r})$$
which depends continuously upon  $u_0$. Moreover, we have
		\begin{align}
\Big\|J^{s-\frac{2j+1}{4}-}u\Big\|_{L_x^{2}L^{\infty}_{T}}+\big\|J^{s} \partial_x^j u\big\|_{L_x^{\infty}L^2_{T}}+ \Big\|J^{j+1/2}D_x^{\frac{2j-1}{4}}u\Big\|_{L_T^{2}L^{\infty}_{x}}< \infty.\label{lwp001} 
		\end{align}
 and 
		\begin{align}
\Big\|J^{s}D^{\frac{2j-1}{8}}_{x}  u\Big\|_{L^{8}_{T}L^{4}_{x}}+\Big\|J^{s}D^{\frac{2j-1}{6}}_{x}  u\Big\|_{L^{6}_{T}L^{6}_{x}}< \infty.\label{lwp002} 
		\end{align}
\end{theorem}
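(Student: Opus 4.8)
The plan is to prove Theorem \ref{lwp} by the classical energy/contraction scheme in weighted Sobolev spaces, following the approach of Kenig--Ponce--Vega \cite{KPV94Hi2} and Kato \cite{Kato83}, but carrying along the weight $|x|^r$ via the operator $\Gamma = x - (2j+1)t\,\partial_x^{2j}$, which commutes with the linear operator $\partial_t + \partial_x^{2j+1}$. First I would record the linear estimates for the unitary group $U(t) = e^{-t\partial_x^{2j+1}}$: the sharp Kato-type local smoothing gain of $j$ derivatives in $L^\infty_x L^2_T$, the maximal function estimate placing $J^{s - (2j+1)/4-}u$ in $L^2_x L^\infty_T$, the Strichartz-type bounds feeding \eqref{lwp002}, and a Kato--Smith-type fractional commutator estimate controlling $|x|^r U(t)u_0$ in terms of $\|u_0\|_{Z_{s,r}}$ when $s \geq 2jr$ (the index balance $s \geq 2jr$ is exactly what makes $\Gamma$-iteration consistent, since each weight power $r$ costs $2jr$ derivatives). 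These estimates are either in the cited literature or are routine oscillatory-integral/interpolation arguments, so I would state them as a preliminary lemma and cite \cite{KPV94Hi1,KPV94Hi2,2017_Linares}.

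Next I would set up the function space: for $k=1$ the nonlinearity is $u\,\partial_x^j u$, and the worst term, under $\partial_x^j$, is $u\,\partial_x^{2j}u$, which loses $j$ derivatives relative to $u$ — precisely what the local smoothing gain of $j$ derivatives in $L^\infty_x L^2_T$ recovers. So I would define $X_T^{s,r}$ as the space of functions with finite norm given by the $L^2_x L^\infty_T$, $L^\infty_x L^2_T$, $L^2_T L^\infty_x$, $L^8_T L^4_x$, and $L^6_T L^6_x$ pieces appearing in \eqref{lwp001}--\eqref{lwp002}, augmented by the weighted piece $\|\,|x|^r u\|_{L^\infty_T L^2_x}$ (and by $C([0,T];H^s)$ to close the basic energy estimate). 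On this space I would run a fixed-point argument for the Duhamel map $\Psi(u)(t) = U(t)u_0 - \int_0^t U(t-t')\big(u\,\partial_x^j u\big)(t')\,dt'$. The linear term is handled by the preliminary lemma; for the Duhamel term one uses the inhomogeneous (dual) versions of the smoothing and maximal function estimates, distributes derivatives via the fractional Leibniz rule, and uses Hölder in time to extract a small factor $T^\theta$ (after possibly splitting into low and high frequencies so that the endpoint smoothing estimate is only used on high frequencies). Propagating the weight through the Duhamel term is done by commuting $\Gamma$ inside, writing $\Gamma(u\partial_x^j u)$ in terms of $\Gamma u$ and derivatives of $u$, and closing via $s \geq 2jr$. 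Contraction then gives a unique solution on a time interval $T$ depending only on $\|u_0\|_{Z_{s,r}}$, and the bounds \eqref{lwp001}--\eqref{lwp002} are read off from membership in $X_T^{s,r}$.

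The main obstacle I anticipate is the weighted estimate when $r$ is fractional: unlike Kato's integer-weight case, $|x|^r$ with $r\in(0,1)$ does not commute nicely with $U(t)$, so one must use the fractional-weight machinery (Stein derivatives / the $A_2$-weight boundedness of the relevant singular integrals, or the pointwise interpolation identities of \cite{2017_Linares}) to make sense of $\Gamma^{r}$-type quantities and to control the commutator between $|x|^r$ and $\partial_x^{2j}$ arising in the Duhamel term. Getting the correct derivative count here — and in particular verifying that the hypothesis $s > 2jr$ (strict, to absorb logarithmic losses) together with $s \geq j+1/2$ (needed for the $H^s$ energy estimate and for the maximal function index $s - (2j+1)/4- > 0$ at $j=1$, and more generally to dominate the nonlinear terms) is exactly sufficient — is the technical heart of the argument. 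Continuous dependence on $u_0$ follows from the same estimates applied to differences, in the usual way.
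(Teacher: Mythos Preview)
Your overall architecture---a contraction for the Duhamel map in a space built from the $L^\infty_T H^s$, weighted $L^\infty_T L^2$, maximal-function, local-smoothing, and Strichartz norms, with a small power of $T$ extracted from the nonlinear estimate---matches the paper's proof exactly. The difference lies in how the fractional weight is propagated.

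You frame the weighted estimate through the commuting vector field $\Gamma = x - (2j+1)t\,\partial_x^{2j}$ and then propose to ``commute $\Gamma$ inside'' the Duhamel term. For $r\in(0,1)$ this is not really a viable route: $\Gamma$ is an integer-weight object, there is no clean meaning to $\Gamma^{r}$, and the sentence ``writing $\Gamma(u\partial_x^j u)$ in terms of $\Gamma u$ and derivatives of $u$'' does not correspond to an actual computation when the weight is fractional. You correctly flag this as the main obstacle and point toward Stein derivatives, but the paper does not patch the $\Gamma$ picture---it bypasses $\Gamma$ entirely. It proves (Lemma~\ref{lem:weightZK}) the identity
\[
|x|^{r} W(t) f \;=\; W(t)(|x|^{r} f) \;+\; W(t)\big(\{\Phi_{\xi,t,r}(\widehat f)\}^{\vee}\big),
\qquad \|\Phi_{\xi,t,r}(\widehat f)\|_{L^2}\lesssim (1+|t|)\|f\|_{H^{2jr}},
\]
obtained directly from Stein's fractional derivative $\mathcal D_{r}$ acting on $e^{it(-1)^{j+1}\xi^{2j+1}}\widehat f$ on the Fourier side. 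This reduces the weighted Duhamel estimate to the single line
\[
\big\||x|^{r} W(t-t')(u\partial_x^{j} u)\big\|_{L^2_x}
\;\lesssim\; \big\||x|^{r} u\,\partial_x^{j} u\big\|_{L^2_x} + (1+T)\,\|u\partial_x^{j} u\|_{H^{s}_x},
\]
the first term closing via $\||x|^{r}u\|_{L^\infty_T L^2_x}\|\partial_x^{j}u\|_{L^2_T L^\infty_x}$ and the second by the $H^s$ nonlinear estimate (Lemma~\ref{lwplem}) already established. No commutator between $|x|^{r}$ and $\partial_x^{2j}$, no $\Gamma$-algebra, and no low/high frequency splitting is needed anywhere. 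Your proposal becomes a correct proof once the $\Gamma$-commutation step is replaced by this Stein-derivative identity; as written, the mechanism you describe for pushing the fractional weight through the Duhamel integral is precisely the step that is not actually carried out.
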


The second result  is about  the propagation of regularity in the right hand
side of the data for positive times. It indicates that the regularity of solutions travels to the left with infinite speed as time progresses. 
\begin{theorem} \label{prop-reg}
Let $m\in \mathbb{N}$, $m\geq j+1$ and $x_0\in \mathbb{R}$. Assume that $u_0\in H^{j+1/2}(\mathbb{R})$ and 
		\begin{align}
\big\|\partial_x^{m}u_0(x)\big\|^2_{L^{2}((x_0, \infty))}=\int_{x_0}^{\infty}|\partial_x^{m}u_0(x)|^2dx< \infty, \label{prop-reg0a} 
		\end{align}
 then the solution $u$ to  \eqref{hKdV} on $[0, T]$ satisfies that for any $v> 0$ and  $\varepsilon> 0$
		\begin{align}
\sup_{0\leq t \leq T} \int_{x_0+\varepsilon-vt}^{\infty}|\partial_x^{\ell}u|^2(t,x)dx<c, \label{prop-reg01} 
		\end{align}
for $\ell=0,1,\cdots,m$ with $c=c\big(m; \|u_0\|_{ H^{j+1/2} };\big\|\partial_x^{m}u_0\big\|_{L^{2}((x_0, \infty))};v; \varepsilon; T\big)$. In particular, for all $t\in(0,T]$, we have 
 $$u(t,\cdot)\in H^m((x_0,\infty)).$$
 Moreover, for any $v\geq 0$,  $\varepsilon> 0$ and  $R> 0$
		\begin{align}
\int_0^T  \int_{x_0+\varepsilon-vt}^{x_0+R-vt}|\partial_x^{m+j}u|^2(t,x)dxdt<c  \label{prop-reg02} 
		\end{align}
 with $c=c\big(m; \|u_0\|_{ H^{j+1/2} };\big\|\partial_x^{m}u_0\big\|_{L^{2}((x_0, \infty))};v; \varepsilon; R; T\big)$.
\end{theorem}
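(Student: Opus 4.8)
The plan is to adapt the weighted-energy / local-smoothing machinery of Isaza--Linares--Ponce (as used in \cite{ILP15} for the classical $k$-generalized KdV, and its fifth-order analogue in \cite{SeSm15}) to the $(2j+1)$-order dispersion in \eqref{hKdV}. The key device will be a family of smooth, nondecreasing cutoff weights $\chi_{\epsilon,b}(x)$ that interpolate between $0$ on $(-\infty,\epsilon/2)$ and $1$ on $(\epsilon,\infty)$, together with their translates $\chi_{\epsilon,b}(x+vt)$ so that the moving region $\{x > x_0+\epsilon - vt\}$ is tracked by the support of the cutoff. The mechanism is an induction on $\ell$ from $\ell = j+1/2$ (the base global-in-time regularity, which is available since $k=1$ here and $u\in C([0,T];H^{j+1/2})$ by Theorem~\ref{lwp}-type energy estimates — one should note $\|u_0\|_{H^{j+1/2}}$ is exactly the hypothesis) up to $\ell = m$: at each stage one differentiates the equation $\ell$ times, multiplies by $\partial_x^\ell u\,\chi(x+vt)$, and integrates. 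Because the equation is linear ($k=1$), the nonlinearity $u\,\partial_x^j u$ produces, after $\ell$ derivatives, terms of the schematic form $\partial_x^a u\,\partial_x^{b} u$ with $a+b = \ell+j$; the top-order term with $b=\ell+j$ is handled by integration by parts moving one derivative onto the weight (gaining a factor of $\chi'$, which is where the local smoothing gain lives), and the remaining terms are controlled by the already-established bounds on lower derivatives plus the local-smoothing norm on an intermediate strip.

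The core computation is the integration by parts in the dispersive term: $\int \partial_x^{2j+1}(\partial_x^\ell u)\cdot \partial_x^\ell u\,\chi\,dx$. Integrating by parts $j$ times and using $[\partial_x^{2j+1},\chi]$-type identities, one obtains a ``good'' positive term $c_j\int (\partial_x^{\ell+j}u)^2\,\chi'\,dx$ (with $c_j>0$ a combinatorial constant depending on $j$) which gives the gain of $j$ derivatives on the strip where $\chi' > 0$ — this is precisely \eqref{prop-reg02} — plus error terms of the form $\int \partial_x^{a}u\,\partial_x^{b}u\,\chi^{(c)}\,dx$ with $a+b\le 2\ell+2j$, $a,b\le \ell+j-1$, which by Young's inequality are absorbed or bounded by $\|\partial_x^{\ell+j-1}u\|^2$ on a slightly larger strip. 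The term $\partial_t$ contributes $\frac{d}{dt}\int(\partial_x^\ell u)^2\chi\,dx$ together with $-v\int(\partial_x^\ell u)^2\chi'\,dx$; since $v>0$ and $\chi'\ge 0$ this latter term has a \emph{helpful} sign and can simply be discarded (this is why the estimate holds for every $v>0$, with $v=0$ recovered as a limit). Assembling, one gets a differential inequality $\frac{d}{dt}\int(\partial_x^\ell u)^2\chi\,dx + c_j\int(\partial_x^{\ell+j}u)^2\chi'\,dx \le C\big(1+\|u\|_{H^{j+1/2}}\big)\int(\partial_x^\ell u)^2\tilde\chi\,dx + (\text{lower-order, already controlled})$, and Gronwall closes the induction step, producing \eqref{prop-reg01} for the next value of $\ell$ and, as a byproduct after the final step, \eqref{prop-reg02} with $\ell = m$.

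Two technical points need care. First, the half-integer base regularity $H^{j+1/2}$ is below the integer level $H^{j+1}$ at which the pointwise energy manipulations are classically justified, so the weighted estimates should first be derived for smooth solutions (or for data truncated/mollified to $H^\infty$), with uniform-in-the-approximation constants, and then passed to the limit using the continuous dependence from Theorem~\ref{lwp} and a standard approximation/limiting argument; one must check that the local-smoothing norm controlling the first inductive step is finite, which follows from the $H^{j+1/2}$ local smoothing estimate already recorded in \eqref{lwp001}. Second — and this is the main obstacle — one must correctly track which \emph{intermediate} strips appear: the error terms at level $\ell$ involve derivatives up to order $\ell+j-1$, which is \emph{higher} than $\ell$ whenever $j\ge 2$, so the naive induction ``regularity $\ell$ on a strip $\Rightarrow$ regularity $\ell+1$ on a smaller strip'' does not immediately close. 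The fix, as in \cite{SeSm15}, is to run the induction in steps of one derivative but to have \emph{already} propagated, from the local-smoothing term \eqref{prop-reg02} at the previous level, the needed $H^{\ell+j-1}_{\mathrm{loc}}$-in-$x$, $L^2_t$ control on a strip — i.e. one interleaves the ``pointwise in $t$'' estimate \eqref{prop-reg01} and the ``integrated in $t$'' smoothing estimate \eqref{prop-reg02}, using the latter at level $\ell$ to feed the error terms at level $\ell+1$. Carefully choosing a nested family of weights $\chi_{\epsilon,b}$ with $b$ shrinking at each of the finitely many steps (from $\ell=j+1/2$ to $\ell=m$, hence $m-j$ roughly integer steps, plus the half-step at the bottom) makes the bookkeeping finite and yields the stated constants $c$ depending on $m$, $\|u_0\|_{H^{j+1/2}}$, $\|\partial_x^m u_0\|_{L^2((x_0,\infty))}$, $v$, $\epsilon$, $R$ and $T$.
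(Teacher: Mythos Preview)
Your strategy is essentially the paper's: weighted energy estimates with nondecreasing cutoffs $\chi_{\varepsilon,b}(x+vt)$, an integration-by-parts identity that turns $\int\partial_x^{m+2j+1}u\cdot\partial_x^m u\,\chi\,dx$ into the good smoothing term $c\int(\partial_x^{m+j}u)^2\chi'\,dx$ plus lower-order remainders supported where $\chi'\neq 0$, an induction on $m$ in which the space-time smoothing bound \eqref{prop-reg02} at level $m_0$ (together with the well-posedness norms \eqref{lwp001} for the bottom steps $j+1\le m\le 2j$) controls the intermediate-order errors at level $m_0+1$, and Gronwall to close. The paper records the key dispersive identity explicitly as $\partial_x^{2j+1}u\cdot u=\tfrac12\sum_{\ell=0}^j c_\ell\,\partial_x^{2\ell+1}\bigl((\partial_x^{j-\ell}u)^2\bigr)$ with combinatorially determined $c_\ell$, which is what your ``integrating by parts $j$ times'' amounts to.

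One step as written would fail, though the repair is immediate within your own framework: the transport term $-v\int(\partial_x^\ell u)^2\chi'\,dx$ does \emph{not} have a helpful sign. After rearranging, the identity reads
\[
\tfrac12\tfrac{d}{dt}\!\int(\partial_x^\ell u)^2\chi\,dx \;+\; c\!\int(\partial_x^{\ell+j}u)^2\chi'\,dx \;=\; v\!\int(\partial_x^\ell u)^2\chi'\,dx \;+\;\cdots,
\]
and since $v>0$ and $\chi'\ge 0$, the $v$-term sits on the right with positive sign and must be \emph{estimated}, not dropped. The paper does exactly this (see \eqref{PropRegu0aaa01} and the paragraph after \eqref{11PropRegu0m}): for $\ell\le j$ by the global $H^{j+1/2}$ norm, for $j+1\le\ell\le 2j$ by the local-smoothing norm $\|J^{j+1/2}\partial_x^j u\|_{L^\infty_x L^2_T}$ from \eqref{lwp001} (since $\chi'$ has compact $x$-support), and for $\ell\ge 2j+1$ by the inductive hypothesis \eqref{prop-reg02} at the previous level. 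The restriction to $v>0$ is therefore not a sign condition on this term; it reflects that the cutoff tracks a half-line expanding to the left, the direction in which the $(2j+1)$-order dispersion actually propagates regularity. (Also, the equation with $k=1$ is not linear --- the nonlinearity is $u\,\partial_x^j u$; you presumably meant quadratic.)
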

\begin{remark}
From the local smoothing effect, see \eqref{lwp001}, one only can get
 		\begin{align}
\int_0^T  \int_{x_0+\varepsilon-vt}^{x_0+R-vt}|D_x^{2j+1/2}u|^2(t,x)dxdt\leq(R-\varepsilon)\big\|D_x^{2j+1/2}u\big\|_{L^{\infty}_xL^{2}_T}<\infty  \label{prop-reg03} 
		\end{align}
which tells us that  \eqref{prop-reg02} is an improvement result of \eqref{prop-reg03}.
\end{remark}

\begin{remark}
The persistence properties and regularity effects for solutions to $k$-generalized
KdV equations  with polynomial decay initial data were studied in \cite{ILP15} at length. We would like to make a statement that an analogous result holds for \eqref{hKdV} without trying to prove it here.  Specifically, assume that $u_0\in H^{j+1/2}(\mathbb{R})$ and 
		\begin{align}
\big\|x^{\frac{m}{2j}}u_0(x)\big\|^2_{L^{2}((0, \infty))}=\int_{0}^{\infty}|x^{\frac{m}{2j}}u_0(x)|^2dx< \infty \nonumber 
		\end{align}
for some $m\in \mathbb{N}$ and $m\geq j+1$,  then the solution $u$ to  \eqref{hKdV} on $[0, T]$ satisfies that 
		\begin{align}
\sup_{0\leq t \leq T} \int_{0}^{\infty}|x^{\frac{m}{2j}}u(t, x)|^2dx<c \nonumber 
		\end{align}
with $c=c\big(m; \|u_0\|_{ H^{j+1/2} };\big\|x^{\frac{m}{2j}}u_0\big\|_{L^{2}((0, \infty))};T\big)$. 

 Moreover, for any $\varepsilon, \delta> 0$, $v\geq 0$,  $\ell_1, \ell_2\in \mathbb{N}$, $\ell_1\geq j$ and $\ell_1+ \ell_2\leq m$,
		\begin{align}
\sup_{\delta\leq t \leq T} \int_{\varepsilon-vt}^{\infty}\big|x_{+}^{\frac{\ell_1}{2j} }\partial_x^{\ell_2}u\big|^2dx+\int_{\delta}^T  \int_{\varepsilon-vt}^{\infty}\big|x_{+}^{\frac{\ell_1-j}{2j} }\partial_x^{\ell_2+j}u\big|^2dxdt<c  \nonumber
		\end{align}
 with $c=c\big(m; \|u_0\|_{ H^{j+1/2} }; \big\|x^{\frac{m}{2j}}u_0\big\|_{L^{2}((0, \infty))};T;\delta; \varepsilon; v\big)$.
\end{remark}

The third result is concerned with the existence of dispersive blow-up solution to higher-order generalized KdV equations \eqref{hKdV}. 

In order to state the dispersive blow-up theorem accurately, we need the following definition for generic irrational number. 

In \cite{DGG17}, Deng, Germain and Guth introduced the definition of genericity. 
\begin{definition}[see Definition 1.1 in \cite{DGG17}]\label{DGG17def}
Let $m,\ell$ be positive integers. Assume that   $\beta_{\ell} \in I$
for all $1\leq \ell\leq m$, where $I$ is a fixed interval of $\mathbb{R}$. We will call a property generic in $(\beta_1, \cdots, \beta_m)$ if it is true for all $(\beta_1, \cdots, \beta_m)$
outside of a null set (set with measure zero) of $I\times   \cdots \times I$.
\end{definition}

It is well-known (see \cite{Cassels57}) that,  generically in $(\beta_1, \cdots, \beta_m)$, then
\begin{equation}
|k_1+k_2\beta_2+ \cdots+k_m\beta_m|\gtrsim \frac{1}{(|k_1|+ \cdots+|k_m|)^{m-1} \log(|k_1|+ \cdots+|k_m|)^{2m}}.\label{generic0}
\end{equation}
By using Definition \ref{DGG17def}, we explain what is generic irrational number.
\begin{definition} \label{gene-irrat}
We will call a real number $r_0$ generic irrational number if $r_0$ is a  irrational number and $(r,r_0)$ is outside of a null set of $ (\mathbb{R}\setminus\mathbb{Q})^2$ for all $r\in \mathbb{R}\setminus\mathbb{Q}$. And we denote $r_0\in \mathbb{R}\setminus\mathbb{Q}^{*}$ if $r_0$ is a generic irrational number.
\end{definition}

One can  immediately get from \eqref{generic0} that
\begin{equation}
\left|\frac{k_1}{k_2}-r_0\right|\gtrsim \frac{1}{(|k_1|+|k_2|)^{3}} \label{generic2}
\end{equation}
for all generic irrational number  $r_0\in \mathbb{R}\setminus\mathbb{Q}^{*}$.  Moreover, it is easy to see that $\mathbb{Q}\subset \mathbb{Q}^*$ and $\mathbb{Q}^*\setminus\mathbb{Q}$ is a set with measure zero.

\begin{theorem} \label{dispersiveblow-upsolution}
Assume that  $s\in [j+1, j+3/2)$, $r\in(0, 1)$ and $s>2jr$, then there exists $u_0 \in Z_{s,r}\cap C^{\infty}(\mathbb{R})$ such that the solution  $u(t)$ of \eqref{hKdV} with $k=1$ is global in time satisfying
	\begin{equation}
		\left\{
		\begin{aligned}
			&u(t)\in C^{j+1}(\mathbb{R}), \hspace{43mm} t>0, \ t\in  \mathbb{R}\setminus\mathbb{Q}^{*}, \quad \\
			&u(t)\in C^{j+1}(\mathbb{R}\setminus \mathbb{Q}^+)\setminus C^{j+1}(\mathbb{R}), \hspace{16.2mm}  t>0, \ t\in\mathbb{Q}. \nonumber
		\end{aligned}
		\right.
	\end{equation}
	Moreover, the Duhamel term
	\begin{equation}
		\begin{aligned}
			z_1(t)=\int_0^t W(t-t')(u\partial^j_x u)(t')dt'
			\nonumber
		\end{aligned}
	\end{equation}
	is in $C^{j+1}(\mathbb{R})$ for all $t>0$.
\end{theorem}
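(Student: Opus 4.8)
\emph{The plan.} Write $u(t)=W(t)u_0+z_1(t)$, where $W(t)$ denotes the unitary group associated with the linear part of \eqref{hKdV} and $z_1(t)=\int_0^tW(t-t')(u\partial_x^j u)(t')\,dt'$. The theorem then reduces to two separate points: \emph{(i)} $z_1(t)\in C^{j+1}(\mathbb R)$ for every $t>0$ --- this is the last (``moreover'') assertion, and once it is known all the prescribed regularity and singularity of $u$ is carried entirely by the linear part $W(t)u_0$; and \emph{(ii)} there exists $u_0\in Z_{s,r}\cap C^\infty(\mathbb R)$, which may be taken small in $Z_{s,r}$, whose linear evolution is $C^{j+1}$ at every generic irrational time and is, at each positive rational time, $(j+1)$-times differentiable precisely on $\mathbb R\setminus\mathbb Q^+$.

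\emph{Construction of the data.} For \emph{(ii)} I would fix a profile $\phi$ that is rapidly decreasing away from the origin, belongs to $Z_{s,r}$, is of class $C^{j}$, and has a \emph{corner in its $j$-th derivative}, e.g.\ $\phi^{(j)}(x)=c\,|x|+(\text{smooth})$ near $0$. Such $\phi$ lies in $H^{\sigma}$ exactly for $\sigma<j+\tfrac32$ (this forces $s<j+\tfrac32$) and, since $2jr<s<j+\tfrac32$, it also lies in $H^{2jr}$. Enumerating $\mathbb Q^+\times\mathbb Q^+=\{(q,p)\}$, set
\[
u_0=\sum_{(q,p)\in\mathbb Q^+\times\mathbb Q^+}\lambda_{q,p}\,W(-q)\phi(\cdot-p),
\]
with $\lambda_{q,p}>0$ super-exponentially small in $\mathrm{num}(q)+\mathrm{den}(q)+\mathrm{num}(p)+\mathrm{den}(p)$ and an overall small factor. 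Then the series converges in $Z_{s,r}$, because $W$ is unitary on $H^s$ and the order-$(2j+1)$ Kato identity $x\,W(t)=W(t)\,x+(2j+1)\,t\,W(t)\,\partial_x^{2j}$ (interpolated in the weight) gives $\|W(-q)\phi(\cdot-p)\|_{Z_{s,r}}\lesssim(1+|q|)^{r}(1+|p|)^{r}\big(\|\phi\|_{Z_{s,r}}+\|\phi\|_{H^{2jr}}\big)$; and $u_0\in C^\infty(\mathbb R)$, because each summand with $q>0$ is smooth by dispersive smoothing of the rapidly decaying $\phi$ --- integrating by parts repeatedly in $\int e^{i(x\xi+c_jq\xi^{2j+1})}(i\xi)^k\widehat\phi(\xi)\,d\xi$ via $\partial_\xi e^{ic_jq\xi^{2j+1}}$ yields $\|W(-q)\phi\|_{C^k}\lesssim|q|^{-\ell_k}\le(\mathrm{den}(q))^{\ell_k}$ for a positive exponent $\ell_k$ --- and the super-exponential decay of $\lambda_{q,p}$ dominates all such polynomial factors, so the series converges in $C^k_{\mathrm{loc}}$ for every $k$. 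Global existence of the solution is then obtained from Theorem~\ref{lwp} and its a priori bounds once $\|u_0\|_{Z_{s,r}}$ is taken small.

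\emph{The linear evolution.} At a generic irrational $t^*>0$, \eqref{generic2} gives $|t^*-q|\gtrsim\big((1+|q|)\,\mathrm{den}(q)\big)^{-3}$ for all $q\in\mathbb Q^+$, so every summand $W(t^*-q)\phi(\cdot-p)$ is smooth with $\|W(t^*-q)\phi\|_{C^{j+1}}\lesssim|t^*-q|^{-\ell_{j+1}}$ only polynomially large in the height of $q$; summing against $\lambda_{q,p}$ shows $W(t^*)u_0\in C^{j+1}(\mathbb R)$. At a positive rational $q_0$, decompose $W(q_0)u_0=S_{q_0}+R_{q_0}$, with $S_{q_0}=\sum_p\lambda_{q_0,p}\phi(\cdot-p)$ the resonant part and $R_{q_0}$ the sum over $q\ne q_0$; since $|q_0-q|\ge(\mathrm{den}(q_0)\,\mathrm{den}(q))^{-1}$ the previous reasoning gives $R_{q_0}\in C^{j+1}(\mathbb R)$. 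For $S_{q_0}$, differentiating $j$ times leaves $\sum_p\lambda_{q_0,p}\phi^{(j)}(\cdot-p)$, whose singular part is $c\sum_p\lambda_{q_0,p}|\cdot-p|$ with a.e.\ derivative $c\sum_p\lambda_{q_0,p}\operatorname{sgn}(\cdot-p)$. This last function jumps at every positive rational $p$ (where $\lambda_{q_0,p}>0$) but is \emph{continuous at every point $x_0\in\mathbb R\setminus\mathbb Q^+$}, since the total jump inside $(x_0-h,x_0+h)$ equals $2\sum_{0<|p-x_0|<h}\lambda_{q_0,p}\to0$ as $h\to0$, and a Lipschitz function with a.e.\ derivative continuous at $x_0$ is differentiable at $x_0$. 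Hence $S_{q_0}$, and therefore $u(q_0)$ by \emph{(i)}, is $(j+1)$-times differentiable at every point of $\mathbb R\setminus\mathbb Q^+$ and at no positive rational point. The ``jumps on a dense set, smoothness on the co-dense complement'' mechanism needs no Diophantine condition on $x_0$; the genericity of the \emph{time} enters only to make the series converge in $C^{j+1}$.

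\emph{The Duhamel term --- the main difficulty.} What remains, and what I expect to be the genuinely hard part, is \emph{(i)}: $z_1(t)\in C^{j+1}(\mathbb R)$ for $t>0$. Here the smoothing estimates \eqref{lwp001}--\eqref{lwp002} and the weighted information $u\in C([0,T];Z_{s,r})$ must be combined: the inhomogeneous local smoothing for the order-$(2j+1)$ equation gains $2j$ derivatives in the $L^1_xL^2_T\to L^\infty_xL^2_T$ scale (the dual of Kato's estimate), while the polynomial spatial decay of $u$, hence of $F=u\partial_x^j u$, is converted --- through the same identity $x-(2j+1)t\,\partial_x^{2j}$ --- into the additional fraction of a derivative needed to reach $H^{j+3/2+}_{\mathrm{loc}}\hookrightarrow C^{j+1}$, a gain the smoothing alone cannot provide since $s<j+\tfrac32$. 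The delicate point is the exact distribution of derivatives: how many are absorbed by the propagator kernel (exploiting its decay) and how many by the factors of the nonlinearity (exploiting that $u$ carries $s+j\ge2j+1$ derivatives in the $L^\infty_xL^2_T$ smoothing norm, more than the $s$ it carries pointwise in $L^2_x$), all while keeping the spatial integration $L^1$-controlled by pairing against $\langle x\rangle^{\pm r}$, in the spirit of the dispersive blow-up arguments of \cite{1993_LinaScia, 2017_Linares}. Once this is settled, the construction of $u_0$ and the series manipulations above are, by comparison, routine bookkeeping.
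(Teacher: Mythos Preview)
Your overall architecture---split $u=W(t)u_0+z_1(t)$, build $u_0$ as a double series over positive rationals in time and space, read off the regularity/singularity of $W(t)u_0$ term by term, and then prove nonlinear smoothing for $z_1$---is exactly the paper's. The profile with a corner in the $j$-th derivative is also the paper's choice (there $\varphi=e^{-2|x|^{j+1}}$, whose $j$-th derivative behaves like $c|x|$ near the origin). Two points deserve comment.

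\emph{The smoothing of $W(t)\phi$.} You use oscillatory-integral integration by parts in $\int e^{i(x\xi+c_jt\xi^{2j+1})}(i\xi)^k\widehat\phi\,d\xi$ to get $\|W(t)\phi\|_{C^k}\lesssim|t|^{-m(k)}$. This works once one is careful near $\xi=0$, and it has the advantage of not tying you to a specific profile. The paper instead exploits the exponential decay of $\varphi$: setting $w=e^{(-1)^{j+1}x}W(t)\varphi$ one finds that $w$ solves $\partial_tw+(\partial_x\mp1)^{2j+1}w=0$, whose Fourier symbol $e^{-t(i\xi\mp1)^{2j+1}}$ has real part producing a Gaussian-type factor $e^{-ct\xi^2}$. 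This immediately gives $\|\partial_x^\ell e^{(-1)^{j+1}x}W(t)\varphi\|_{L^2}\lesssim t^{-\ell/2}e^{t}\|e^{(-1)^{j+1}x}\varphi\|_{L^2}$, a cleaner quantitative bound than the one coming from stationary phase, and one that feeds directly into the series estimates at generic irrational times.

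\emph{The Duhamel term.} You correctly locate the difficulty but your description of the mechanism---trading decay for derivatives through the vector field $x-(2j+1)t\partial_x^{2j}$---is not how the paper proceeds. The paper applies the dual smoothing estimate $\big\|\partial_x^j\int_0^tW(-t')g\,dt'\big\|_{L^2_x}\lesssim\|g\|_{L^1_xL^2_T}$, reducing matters to $\|D_x^{3/2+}(u\partial_x^ju)\|_{L^1_xL^2_T}$. This is split as $\|uD_x^{3/2+}\partial_x^ju\|_{L^1_xL^2_T}+\|[D_x^{3/2+},u]\partial_x^ju\|_{L^1_xL^2_T}$. The first piece is handled by H\"older ($L^{6/5}_xL^3_T\times L^6_{xT}$), the $L^6_{xT}$ Strichartz norm, and---here is where the weight enters---$\|u\|_{L^{6/5}_xL^3_T}\lesssim\|\langle x\rangle^{1/2+}u\|_{L^3_{xT}}$, which by Sobolev and the interpolation $\|J^{\theta a}\langle x\rangle^{(1-\theta)b}f\|_{L^2}\lesssim\|J^af\|_{L^2}^\theta\|\langle x\rangle^bf\|_{L^2}^{1-\theta}$ is controlled by $\|u\|_{X_T}$. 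The commutator piece uses the \emph{weighted} Kato--Ponce inequality (Lemma~\ref{Kato-Ponce-weight}) together with the same interpolation. So the weight is spent not through the commutation identity but directly in H\"older and in weighted product estimates; this is the concrete distribution of derivatives and weights you were looking for.
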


\begin{remark}
This theorem  shows that the dispersive blow-up phenomenon is exclusively caused by singularities from the linear part of \eqref{hKdV}.
\end{remark}

The construction of linear dispersive blow-up solutions allows us to extend the result described above to  solutions of \eqref{hKdV} with $k\geq2$. In these cases, weighted Sobolev space is not indispensable. 

\begin{theorem} \label{dispblupkg2}
Let  $k\geq2$, $s=j+3/2-$ and $0<r<1$. Then there exists $u_0 \in H^{s}\cap C^{\infty}(\mathbb{R})$ with $\|u_0\|_{H^s}\ll1$ such that the solution  $u(t)$ of \eqref{hKdV} is global in time satisfying
	$$u(t)\in C(\mathbb{R};H^s(\mathbb{R}))\cap X_T^k$$
where $X_T^k$  is the work space defined via Strichartz estimates, the smoothing effect estimates and maximal functions estimates. Moreover, we have
	\begin{equation}
		\left\{
		\begin{aligned}
			&u(t)\in C^{j+1}(\mathbb{R}), \hspace{43mm} t>0, \ t\in  \mathbb{R}\setminus\mathbb{Q}^{*}, \quad \\
			&u(t)\in C^{j+1}(\mathbb{R}\setminus\mathbb{Q}^+)\setminus C^{j+1}(\mathbb{R}), \hspace{16.2mm} t>0, \ t\in \mathbb{Q}. \nonumber
		\end{aligned}
		\right.
	\end{equation}
\end{theorem}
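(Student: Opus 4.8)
The plan is to write the solution as $u(t)=W(t)u_0+z(t)$ with
$z(t)=\int_0^t W(t-t')(u^k\partial_x^j u)(t')\,dt'$, to choose $u_0$ so that the linear evolution $W(t)u_0$ already displays dispersive blow-up, and then to show that the Duhamel correction $z(t)$ is smoother — of class $C^{j+1}$ for every $t>0$ — so that the singular set of $u(t)$ is exactly that of $W(t)u_0$. For the global well-posedness one exploits that $k\ge2$ makes the nonlinearity super-quadratic: setting up the iteration space $X_T^k$ from the Strichartz bounds of the type \eqref{lwp002}, the Kato local smoothing $\|D_x^{j}J^{s}u\|_{L^\infty_xL^2_T}$ (a gain of $j$ derivatives produced by $\partial_x^{2j+1}$), and the maximal function estimate $\|J^{s-\frac{2j+1}{4}-}u\|_{L^2_xL^\infty_T}$ as in \eqref{lwp001}, one estimates $u^k\partial_x^ju$ by placing $\partial_x^j u$ in the smoothing norm (which absorbs the $j$ lost derivatives), distributing the remaining $s=j+3/2-$ derivatives by the fractional Leibniz rule, and bounding the $k$ low-order factors in the maximal and Strichartz norms; since $k\ge2$ at least one extra factor of $\|u\|_{X_T^k}$ is available, the contraction closes for $\|u_0\|_{H^s}\ll1$, the resulting a priori bound propagates uniformly in $T$, and one gets $u\in C(\mathbb{R};H^s)\cap X_T^k$.

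Second, the initial datum. One adapts the construction carried out for Theorem \ref{dispersiveblow-upsolution} — this is exactly where the constraint $s<j+3/2$ and the threshold $C^{j+1}$ enter, since $H^{j+3/2}(\mathbb{R})\not\hookrightarrow C^{j+1}(\mathbb{R})$. Roughly, one fixes a profile $\psi$ that is smooth away from the origin, rapidly decaying, and not of class $C^{j+1}$ at the origin, enumerates $\mathbb{Q}^+\times\mathbb{Q}^+$ as $\{(t_n,x_n)\}_{n\ge1}$, and sets $u_0=\delta\sum_{n\ge1}c_n\,W(-t_n)\big(\psi(\cdot-x_n)\big)$, where $c_n$ decays fast enough that the series converges in $H^s$ and, by the infinite-smoothing property of $W(\tau)$ for $\tau\neq0$ acting on rapidly decaying data, in $C^\infty(\mathbb{R})$; the factor $\delta>0$ is then taken small so that $\|u_0\|_{H^s}\ll1$. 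For $t\in\mathbb{Q}^+$ the terms with $t_n=t$ contribute a family of genuine $C^{j+1}$-singularities of $W(t)u_0$ located on a dense subset of $\mathbb{R}$, while the terms with $t_n\neq t$ have $t-t_n\neq0$ and sum to a $C^{j+1}$ function, so $W(t)u_0\in C^{j+1}(\mathbb{R}\setminus\mathbb{Q}^+)\setminus C^{j+1}(\mathbb{R})$. For $t\in\mathbb{R}\setminus\mathbb{Q}^*$ one has $t-t_n\neq0$ for every $n$, and the genericity estimate \eqref{generic2} forces $|t-t_n|\gtrsim(|p_n|+|q_n|)^{-3}$ when $t_n=p_n/q_n$; hence the smoothing constants of $W(t-t_n)$ grow only polynomially in the denominators $q_n$, and $\sum_n c_n W(t-t_n)\big(\psi(\cdot-x_n)\big)$ converges in $C^{j+1}(\mathbb{R})$, so $W(t)u_0\in C^{j+1}(\mathbb{R})$.

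Third, the Duhamel term and the conclusion. Exactly as for the $k=1$ Duhamel term $z_1$ in Theorem \ref{dispersiveblow-upsolution} — and more easily, because here $k\ge2$ and $\|u_0\|_{H^s}\ll1$ — one shows that the inhomogeneous smoothing estimates for $\partial_t+\partial_x^{2j+1}$ let $z(t)$ gain more than the missing $1/2+$ derivatives over $u^k\partial_x^ju$ controlled in $X_T^k$, so that $z(t)\in H^{j+3/2+\ep}_{\mathrm{loc}}$ uniformly on bounded time intervals for some $\ep>0$; Sobolev embedding then gives $z(t)\in C^{j+1}(\mathbb{R})$ for all $t>0$. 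Since $u(t)=W(t)u_0+z(t)$ and $z(t)$ is $C^{j+1}$ for all $t>0$, the dichotomy of the second step transfers verbatim: $u(t)\in C^{j+1}(\mathbb{R})$ for $t>0$ with $t\in\mathbb{R}\setminus\mathbb{Q}^*$, and $u(t)\in C^{j+1}(\mathbb{R}\setminus\mathbb{Q}^+)\setminus C^{j+1}(\mathbb{R})$ for $t>0$ with $t\in\mathbb{Q}$.

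The heart of the argument — and the main obstacle — is the summation in the second step: $\psi$ and the coefficients $\{c_n\}$ must be chosen so that simultaneously $u_0\in C^\infty(\mathbb{R})$, the series for $W(t)u_0$ converges in $C^{j+1}(\mathbb{R})$ at \emph{every} generic irrational $t$ (this is precisely where the polynomial-in-$q_n$ lower bound \eqref{generic2} is indispensable, since at a non-generic irrational $t-t_n$ may tend to $0$ too fast for the smoothing of $W(t-t_n)$ to compensate), and no cancellation among the terms destroys the $C^{j+1}$-singularity at the designated rational space--time points. By contrast, the nonlinear estimates of the first and third steps are routine once the smallness $\|u_0\|_{H^s}\ll1$ is in force, which is the whole point of restricting to $k\ge2$ and the reason weighted spaces can be dispensed with here.
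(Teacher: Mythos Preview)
Your proposal is correct and follows essentially the same approach as the paper, which in fact omits the proof of this theorem and simply refers to the argument for Theorem~\ref{dispersiveblow-upsolution}. Your three-step outline (small-data global well-posedness in $X_T^k$ without weights, the explicit linear blow-up datum of Theorem~\ref{linbupthm}, and nonlinear smoothing of the Duhamel term via the Kato smoothing effect) matches the paper's strategy; the only minor remark is that for $k\geq2$ the paper's Theorem~\ref{ghKdVDuhamelkg2} actually gains a full $j$ derivatives on $z_k(t)$ via \eqref{linear estimate:Kato-2a}, so the Duhamel smoothing is in fact stronger (and simpler, requiring no weights) than the $k=1$ argument you reference.
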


\begin{remark}
The proof for this theorem is very similar to that of Theorem  \ref{dispersiveblow-upsolution}, hence we omit the details.
\end{remark}

From the consequences mentioned above, we know that the Duhamel term possesses higher regularity.
\begin{theorem} \label{ghKdVDuhamelkg2}
Let $k\geq2$, $s\geq j+1$ and $s\in \mathbb{N}$. Assume that $u(t)\in C([-T, T];H^s(\mathbb{R}))$ is the solution to \eqref{hKdV} with initial data $u_0 \in H^s(\mathbb{R})$. Denote
$$z_k(t)=\int_0^t W(t-t')(u^k\partial^j_x u)(t')dt',$$
then  we have
	$$z_k(t)\in C([-T, T];H^{s+j}(\mathbb{R})).$$
\end{theorem}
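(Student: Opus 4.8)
The gain of $j$ derivatives for the Duhamel term is nothing but the inhomogeneous version of the Kato local smoothing effect for the free propagator $W(t)=e^{-t\partial_x^{2j+1}}$. Recall that the homogeneous smoothing inequality gains exactly $j$ derivatives, $\|D_x^{\,j}W(t)f\|_{L^\infty_xL^2_T}\lesssim_T\|f\|_{L^2_x}$; by its dual form together with the unitarity of $W(t)$ one obtains the Duhamel estimate
\begin{equation*}
\Big\|J^{\,j}\!\int_0^t W(t-t')G(t')\,dt'\Big\|_{L^\infty_TL^2_x}\ \lesssim_T\ \|G\|_{L^1_xL^2_T}.
\end{equation*}
Taking $G=J^{s}(u^k\partial_x^ju)$ and writing $\|z_k(t)\|_{H^{s+j}}=\|J^{s+j}z_k(t)\|_{L^2_x}$, the whole theorem is reduced to the single nonlinear bound
\begin{equation*}
\big\|\,J^{s}(u^k\partial_x^ju)\,\big\|_{L^1_xL^2_T}\ <\ \infty .
\end{equation*}

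\noindent\textbf{The nonlinear bound.} Being the solution issued from $u_0\in H^s$, the function $u$ lies in the work space of the local theory, so besides $u\in C([-T,T];H^s)$ it satisfies the Kato smoothing gain $\|D_x^{\,\rho}u\|_{L^\infty_xL^2_T}<\infty$ for $0\le\rho\le s+j$, the maximal-function and Strichartz bounds underlying \eqref{lwp001}--\eqref{lwp002} (for instance $\|D_x^{\,\rho}u\|_{L^2_xL^\infty_T}$, $\|D_x^{\,\rho}u\|_{L^4_xL^\infty_T}$ and $\|D_x^{\,\rho}u\|_{L^6_{x,t}}$ finite over the admissible ranges of $\rho$), and the trivial bounds $\|D_x^{\,\rho}u\|_{L^2_xL^2_T}<\infty$ for $\rho\le s$ and $\|u\|_{L^\infty_{x,t}}\lesssim\sup_{[-T,T]}\|u(t)\|_{H^s}<\infty$. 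Since $s\in\mathbb N$, the Leibniz rule (in its fractional form when $s$ is odd) expands $J^{s}(u^k\partial_x^ju)$ into a finite sum of products of $k+1$ factors, each of the form $\partial_x^{m_i}u$ with $m_1+\dots+m_{k+1}=s+j$, and I would bound each such product in $L^1_xL^2_T$ by iterated Hölder in $x$ and $t$. The decisive observation is that, after removing the single factor carrying the largest number of derivatives (which can be as large as $s+j$ and must therefore be placed in the smoothing norm $L^\infty_xL^2_T$ — the only norm reaching the top regularity), every remaining factor has order $\le s$; these $k\ge2$ remaining factors are distributed among the maximal-function/Strichartz norms $L^2_xL^\infty_T$, $L^4_xL^\infty_T$, $L^6_{x,t}$ and the trivial norms $L^2_xL^2_T$, $L^\infty_{x,t}$. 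Because $k\ge2$ there are always at least two ``spare'' low-order factors of $u$, which is precisely what is needed to make the Hölder exponents in $x$ add up to $1$ (turning $L^\infty_xL^2_T\cdot L^1_x(\text{rest})$ into $L^1_xL^2_T$), while $s\ge j+1$ keeps the orders of the spare factors within the admissible ranges; one exploits here that the order-$(2j+1)$ propagator enjoys Strichartz and maximal-function estimates \emph{with} a derivative gain, as reflected in \eqref{lwp002}. Summing over the finitely many Leibniz terms yields the claimed bound.

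\noindent\textbf{Conclusion and time-continuity.} Granting $\|J^{s}(u^k\partial_x^ju)\|_{L^1_xL^2_T}<\infty$, the Duhamel smoothing estimate above gives $z_k\in L^\infty([-T,T];H^{s+j})$. To upgrade $L^\infty_T$ to $C_T$, I would use the decomposition $z_k(t)-z_k(\tau)=\int_{\tau}^{t}W(t-t')F(t')\,dt'+\big(W(t-\tau)-\mathrm{Id}\big)\int_0^{\tau}W(\tau-t')F(t')\,dt'$ with $F=u^k\partial_x^ju$: the first term tends to $0$ in $H^{s+j}$ as $t\to\tau$ because the same estimate bounds it by $\|J^{s}F\|_{L^1_xL^2_{[\tau,t]}}\to0$ (dominated convergence), and the second term tends to $0$ because $\int_0^{\tau}W(\tau-t')F\,dt'\in H^{s+j}$ (again by the estimate) and $\{W(t)\}_{t\in\mathbb R}$ is strongly continuous on $H^{s+j}$. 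Hence $z_k\in C([-T,T];H^{s+j}(\mathbb R))$.

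\noindent\textbf{Main obstacle.} The heart of the matter is the Hölder bookkeeping in the Leibniz expansion: only one factor can be assigned the smoothing norm $L^\infty_xL^2_T$, so whenever two factors carry many derivatives simultaneously one must check that the surplus factors supplied by $k\ge2$ genuinely let the exponents close and, crucially, that the orders of the factors placed in maximal-function/Strichartz norms stay within the (gain-improved) ranges available for $\partial_t+\partial_x^{2j+1}$. This is exactly where the hypotheses $k\ge2$ and $s\ge j+1$ enter; for $k=2$, with only three factors available, it forces the combination ``one smoothing norm plus two maximal-function/Strichartz norms'', which is feasible thanks to the sharp estimates already underlying Theorems~\ref{lwp} and~\ref{dispblupkg2}. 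The purely linear ingredients (homogeneous and dual Kato smoothing, Strichartz and maximal-function estimates, and the Christ--Kiselev lemma to handle the truncated time integral) are classical and are the same tools on which the quoted well-posedness results rest.
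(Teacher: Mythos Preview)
Your proposal is correct and follows essentially the same route as the paper: both apply the dual Kato smoothing estimate \eqref{linear estimate:Kato-2a}/\eqref{linear estimate:Kato-2} to reduce matters to $\|\partial_x^{s}(u^k\partial_x^ju)\|_{L^1_xL^2_T}<\infty$, expand by the Leibniz rule, place the top-order factor in the smoothing norm $L^\infty_xL^2_T$, and absorb the remaining $k\ge2$ low-order factors via the maximal-function and Strichartz bounds from the local theory (the paper uses $L^k_xL^\infty_T$ with Sobolev to $L^2_xL^\infty_T$ for the extreme term $u^k\partial_x^{s+j}u$, and $L^2_xL^\infty_T\times L^4_{xT}\times L^4_{xT}$ for terms like $u^{k-1}\partial_x^su\,\partial_x^ju$). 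Your explicit time-continuity argument via the $W(t-\tau)-\mathrm{Id}$ splitting is a useful addition that the paper leaves implicit; one minor quibble is that since $s\in\mathbb N$ the ordinary Leibniz rule suffices throughout (no fractional version is needed, regardless of the parity of $s$).
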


Our last result is a supplement to Theorem  \ref{prop-reg}. To be precise, we will construct initial data $u_0\in H^{j+1}(\mathbb{R})\cap W^{r, p}(\mathbb{R})$ for some $r$ and $p$, such that the singularities of solutions do not propagate in any direction.
\begin{theorem} \label{DoNotProg}
Let $k\geq2$ and $t^*\neq 0$. 
\begin{itemize}
\item[1.] There exist $u_0\in H^{j+1}(\mathbb{R})\cap W^{j+1, p}(\mathbb{R})$, $p>2$, such that the corresponding solution to \eqref{hKdV} $u\in C(\mathbb{R};  H^{j+1}(\mathbb{R}))$ is global in time and satisfies:
 $$u(t^*)\notin W^{j+1,p}(\mathbb{R}) \hspace{8mm} \text{for every} \hspace{3mm} p>2.$$
\item[2.] There exist $r>j+1$, $p>2$ and an initial datum $u_0\in H^{j+1}(\mathbb{R})\cap W^{r, p}(\mathbb{R})$,  such that the corresponding solution to \eqref{hKdV} $u\in C(\mathbb{R};  H^{j+1}(\mathbb{R}))$ is global in time and satisfies:
 $$u(t_0)\notin W^{r,p}(\mathbb{R}_{+}) \hspace{8mm} \text{and}\hspace{8mm}  u(-t_0)\notin W^{r,p}(\mathbb{R}_{+})$$
for some $t_0>0$, where $\mathbb{R}_{+}:=\{x\in \mathbb{R}: x\geq 0\}$. The same result holds for $\mathbb{R}_{-}$.
\end{itemize}
\end{theorem}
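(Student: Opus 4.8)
The idea is to exploit the fact that, for $k\ge2$, all singularities of the solution are carried by the linear part $W(t)u_0$, and then to build a datum whose linear evolution is regular at $t=0$ while focusing to a point singularity at the prescribed times. First I would reduce everything to the linear flow: by the (small‑data) well‑posedness theory for \eqref{hKdV} with $k\ge2$ (cf.\ \cite{KPV94Hi1,KPV94Hi2} and the framework underlying Theorem \ref{dispblupkg2}), an initial datum of sufficiently small norm yields a solution $u\in C(\mathbb R;H^{j+1}(\mathbb R))$; writing $u(t)=W(t)u_0-z_k(t)$ with $z_k(t)=\int_0^tW(t-t')(u^k\partial_x^ju)(t')\,dt'$ and applying Theorem \ref{ghKdVDuhamelkg2} with $s=j+1$ gives $z_k(t)\in H^{2j+1}(\mathbb R)$ for all $t$. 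Since $H^{2j+1}(\mathbb R)\hookrightarrow W^{r,p}(\mathbb R)$ for every $p\in[2,\infty]$ and $r\le 2j+\tfrac12+\tfrac1p$, the term $z_k$ can never destroy the regularity claimed; hence it suffices to construct $u_0$ with the required regularity at $t=0$ but whose linear evolution $W(t^*)u_0$ (resp.\ $W(\pm t_0)u_0$) escapes the relevant $W^{r,p}$ class.

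For part 1, fix $t^*\ne0$, put the singular point at $x=0$, and let $g(x)=|x|^{-1/2}\big(\log(e/|x|)\big)^{-\theta}\rho(x)$ with $\theta>1/2$ and $\rho\in C_c^\infty$ equal to $1$ near $0$, so that $g\in L^1(\mathbb R)\cap L^2(\mathbb R)$ but $g\notin L^p(\mathbb R)$ for any $p>2$. Choose $\psi\in H^{j+1}(\mathbb R)$ compactly supported with $\partial_x^{j+1}\psi=g$ near $0$ and smooth compactly supported elsewhere (an iterated antiderivative of $g$, cut off), so $\psi\in C^j(\mathbb R)\cap H^{j+1}(\mathbb R)$ and $\psi\notin W^{j+1,p}(\mathbb R)$ for every $p>2$. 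Set $u_0:=\lambda\,W(-t^*)\psi$, $\lambda\ll1$. The linear facts I will use are: the kernel $K_{t}(x):=\mathcal F^{-1}_\xi\big[e^{i(-1)^j t\,\xi^{2j+1}}\big](x)$ belongs to $C^\infty(\mathbb R)\cap L^\infty(\mathbb R)$ with $\|K_t\|_{L^\infty}\lesssim|t|^{-1/(2j+1)}$ (a standard generalized‑Airy/van der Corput estimate, see \cite{KPV94Hi1}), and $W(t)$ is unitary on each $H^s$. From these, $u_0=\lambda K_{-t^*}\ast\psi\in C^\infty(\mathbb R)$ (smooth convolved with a compactly supported distribution), $\partial_x^\ell u_0=\lambda K_{-t^*}\ast\partial_x^\ell\psi\in L^2(\mathbb R)\cap L^\infty(\mathbb R)$ for $0\le\ell\le j+1$ (Plancherel together with Young's inequality, using $\partial_x^\ell\psi\in L^1$), hence $u_0\in W^{j+1,p}(\mathbb R)$ for all $p\in[2,\infty]$, and $u_0\in H^{j+1}(\mathbb R)$ by unitarity. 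Finally $W(t^*)u_0=\lambda\psi$, so $u(t^*)=\lambda\psi-z_k(t^*)$, and since $z_k(t^*)\in W^{j+1,p}(\mathbb R)$ for all $p$ while $\lambda\psi\notin W^{j+1,p}(\mathbb R)$ for $p>2$, we get $u(t^*)\notin W^{j+1,p}(\mathbb R)$ for every $p>2$.

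For part 2 one superposes two such data and keeps track of one extra derivative. Pick $\delta>0$ small, set $r:=j+1+\delta$, choose $p>2$ close to $2$ so that $p'<\tfrac{2}{1+2\delta}$ and $\delta\le j-\tfrac12+\tfrac1p$ (the latter ensuring $z_k\in W^{r,p}$), and let $\phi$ be the \emph{even} analogue of $\psi$: compactly supported, $\phi\in H^{j+1}(\mathbb R)$, with $\partial_x^{j+1}\phi\sim|x|^{-1/2}(\log 1/|x|)^{-\theta}$ near $0$, so that $J^r\phi\sim|x|^{-1/2-\delta}(\log 1/|x|)^{-\theta}$ near $0$; hence $J^r\phi\in L^{p'}(\mathbb R)$ but $J^r\phi\notin L^p$ near $0^{\pm}$, i.e.\ $\phi\notin W^{r,p}(\mathbb R_\pm)$ while $\phi\in H^{j+1}(\mathbb R)$. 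Set $u_0:=\lambda\big(W(-t_0)\phi+W(t_0)\phi\big)$ for a fixed $t_0>0$ and $\lambda\ll1$. As in part 1, $u_0\in C^\infty(\mathbb R)\cap H^{j+1}(\mathbb R)$, and using in addition the dispersive bound $\|W(t)f\|_{L^p}\lesssim|t|^{-\gamma_p}\|f\|_{L^{p'}}$ for $p\ge2$ (interpolate $\|W(t)\|_{L^1\to L^\infty}\lesssim|t|^{-1/(2j+1)}$ with $\|W(t)\|_{L^2\to L^2}=1$), we get $\|J^r u_0\|_{L^p}\lesssim_{t_0}\|J^r\phi\|_{L^{p'}}<\infty$, so $u_0\in W^{r,p}(\mathbb R)$. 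By the group law $u(\pm t_0)=\lambda\phi+\lambda W(\pm 2t_0)\phi-z_k(\pm t_0)$, where $W(\pm2t_0)\phi\in W^{r,p}(\mathbb R)$ (same estimate as for $u_0$) and $z_k(\pm t_0)\in W^{r,p}(\mathbb R)$, while $\lambda\phi$ carries an even singularity at $0$ lying outside $W^{r,p}(\mathbb R_+)$ and $W^{r,p}(\mathbb R_-)$; hence $u(\pm t_0)\notin W^{r,p}(\mathbb R_+)$ and likewise for $\mathbb R_-$.

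The main obstacle is the linear analysis: establishing the smoothness and $L^\infty$‑boundedness of the generalized‑Airy kernel $K_t$ together with its $|t|^{-1/(2j+1)}$ decay (hence the $L^{p'}\to L^p$ estimate), and, for part 2, calibrating $(\delta,r,p)$ and pinning down the precise local behaviour of $J^r\phi$ so that $\phi$ lies exactly outside $W^{r,p}(\mathbb R_\pm)$ while $W(\pm t_0)\phi$ and $z_k$ remain inside $W^{r,p}(\mathbb R)$. The remaining ingredients — the decomposition $u=W(\cdot)u_0-z_k$, the gain of $j$ derivatives for $z_k$ from Theorem \ref{ghKdVDuhamelkg2}, unitarity of $W$, and small‑data global existence — are routine.
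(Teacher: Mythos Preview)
Your reduction to the linear flow via Theorem~\ref{ghKdVDuhamelkg2} and the construction in Part~1 are essentially what the paper does. The only cosmetic difference is that the paper uses the dispersive estimate with derivative gain, Lemma~\ref{lem:dispersive decay}, to get $\|D_x^{(2j-1)/2}W(t)f\|_{L^\infty}\lesssim|t|^{-1/2}\|f\|_{L^1}$ and then interpolates with $L^2$, whereas you use the weaker kernel bound $\|K_t\|_{L^\infty}\lesssim|t|^{-1/(2j+1)}$ and Young; both yield $u_0=W(-t^*)\phi\in W^{j+1,p}$ for $p\ge2$.

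Part~2 is where you and the paper genuinely diverge. The paper does \emph{not} attempt to verify $J^r\phi\in L^{p'}$ or pin down the local profile of $J^r\phi$. Instead it invokes the Strichartz estimate \eqref{ineq:strichartz1} in the form
\[
\big\|D_x^{\frac{(p-2)(2j-1)}{4p}}W(t)\widetilde\phi\big\|_{L^{4p/(p-2)}_t L^p_x}\lesssim\|\widetilde\phi\|_{L^2},
\]
applied with $r=j+1+\tfrac{(p-2)(2j-1)}{4p}$ to $D_x^{j+1}\widetilde\phi\in L^2$; finiteness of the mixed norm then guarantees the \emph{existence} of some $t_0>0$ for which $W(\pm t_0)\widetilde\phi$ and $W(\pm2t_0)\widetilde\phi$ all lie in $W^{r,p}(\mathbb R)$. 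This needs nothing about $\widetilde\phi$ beyond $\widetilde\phi\in H^{j+1}$ and $\widetilde\phi\notin W^{r,p}(\mathbb R_+)$, and it matches the statement of the theorem, which only asks for \emph{some} $t_0$. Your fixed-time $L^{p'}\!\to\!L^p$ route would in principle give every $t_0>0$, but at the cost of the local computation of $J^r\phi$ and the verification that $J^r\phi\in L^{p'}(\mathbb R)$, neither of which is entirely routine (and which you rightly flag as the main obstacle). The paper's Strichartz argument sidesteps this completely.
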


\textbf{Notation.} We give the notation that will be used throughout this paper. For $A, B \geq 0$ fixed,  $A\lesssim B$ means that $A\leq C \cdot B$ for an absolute constant $C>0$.  $A\gg B$ means that  $A>C \cdot B$ for a very large positive constant $C$.  We write $c+\equiv c+\epsilon$ and $ c-\equiv c-\epsilon$ for some  $0<\epsilon\ll 1$.

 We denote spatial variables by $x$ and its dual Fourier variable  by $\xi$. Given a function $u$, we denote $\mathscr{F} u $ or $\widehat{u}$ its Fourier transform and denote $\mathscr{F}^{-1} u $  its Fourier inverse transform.   The unitary group associated to the linear higher-order KdV equation is given by
$$ W(t)=e^{-t\partial_x^{2j+1}}=\mathscr{F}^{-1}e^{it(-1)^{j+1}\xi^{2j+1}}\mathscr{F}.$$  Then, the solution to \eqref{hKdV} can be written as
\begin{equation}\label{IZK}
	u(t)=W(t)u_0+\int^t_0 W(t-t')(u^k\partial^j_x u)dt'.\nonumber
\end{equation}

Let $1\leq p,q\leq\infty$. We define 
$$\|f\|_{L^p_xL^q_{T}}=\left(\int_{\mathbb{R}}\Big(\int^T_{-T}|f(t,x)|^qdt \ \Big)^{p/q}dx\right)^{1/p}$$
with the usual modifications if either $p=\infty$ or $q=\infty$ .  If $T=\infty$ we shall use the notation $\|f\|_{L^p_xL^q_{t}}$. Similar definitions and considerations may be made interchanging the variables $x$ and $t$.

For $s>0$, we also define $D_x^sf$ and $J^sf$ as
	\begin{align}
D^s_x f =\mathscr{F}^{-1}|\xi|^{s}\hat{f}(\xi), \hspace{10mm} J^s f =\mathscr{F}^{-1}(1+\xi^2)^{s/2}\hat{f}(\xi).	\label{DeriFouri}
\end{align}

\textbf{Organization of the paper.} In Section 2, we recall some
estimates that will be used in the proofs that follow. Section 3 begins with the computation of free solution in fractional weighted Sobolev space and then proceeds to the proof  the  persistence  property. The treatment of propagation of one-sided regularity for solutions to \eqref{hKdV} comprises Section 4 where Theorem \ref{prop-reg} is proved.  We show Theorem \ref{dispersiveblow-upsolution} in Section 5 which consists of two parts. Subsection 5.1 is devoted to the construction of smooth initial data such that the solution of the corresponding linear equation develops singularities at all positive rational times.  Subsection 5.2 is devoted to the smoothing of Duhamel term. We also prove  Theorem \ref{ghKdVDuhamelkg2} and Theorem \ref{DoNotProg} at the end of Subsection 5.2.

\section{Preliminaries}\label{section:linear estimate}

We recall some important estimates in this section, such as  Strichartz estimates,  local smoothing estimates, maximal function estimates, interpolation inequality and commutator estimates.

Let us first give the  dispersive decay estimate for the linear operator of higher-order KdV equations. 
\begin{lemma}[see Lemma 2.7 in \cite{KPV91}]\label{lem:dispersive decay}
	Let $j\in \mathbb{N}^+$ and $\beta\in \mathbb{R}$.  Denote
	\begin{equation*}
		I_{t}(x)=\int_{\mathbb{R}}|\xi|^{\frac{2j-1}{2}+i\beta} e^{it(-1)^{j+1}\xi^{2j+1}+i x\xi} d \xi,
	\end{equation*}
then we have
	\begin{align}
		\left\|I_{t}(x)\right\|_{L^\infty} \lesssim (1+|\beta|)|t|^{-1/ 2}.\label{hKdVdispEs}
	\end{align}
\end{lemma}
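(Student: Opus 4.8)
The plan is to remove the time dependence by a scaling substitution, reducing \eqref{hKdVdispEs} to a uniform (in the translation parameter) bound for a single oscillatory integral, and then to estimate that integral by a dyadic decomposition in frequency combined with van der Corput's lemma, keeping track of the $(1+|\beta|)$ factor at each step. Concretely, substituting $\xi=|t|^{-1/(2j+1)}\eta$ in the definition of $I_t$, the amplitude $|\xi|^{\frac{2j-1}{2}+i\beta}\,d\xi$ scales precisely as $|t|^{-1/2}|t|^{-i\beta/(2j+1)}$ times $|\eta|^{\frac{2j-1}{2}+i\beta}\,d\eta$, while the phase becomes $\epsilon\eta^{2j+1}+y\eta$ with $\epsilon:=\operatorname{sgn}(t)(-1)^{j+1}\in\{\pm1\}$ and $y:=x|t|^{-1/(2j+1)}$. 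Since $|t|^{-i\beta/(2j+1)}$ has modulus one and taking $\sup_x$ is the same as taking $\sup_y$, \eqref{hKdVdispEs} is equivalent to the scale-free bound
\[
\Lambda(\beta):=\sup_{\epsilon=\pm1}\ \sup_{y\in\mathbb{R}}\ \Big|\int_{\mathbb{R}}|\eta|^{\frac{2j-1}{2}+i\beta}\,e^{i(\epsilon\eta^{2j+1}+y\eta)}\,d\eta\Big|\ \lesssim_j\ 1+|\beta|.
\]
The integral being only conditionally convergent, one works throughout with a cutoff $\mathbf 1_{\{|\eta|\le N\}}$ (or a Gaussian regularizer), all bounds being uniform in $N$, and lets $N\to\infty$ at the end.

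Write $a(\eta)=|\eta|^{\frac{2j-1}{2}+i\beta}$ and $\psi(\eta)=\epsilon\eta^{2j+1}+y\eta$. Since $j\ge1$, $a$ is continuous, vanishes at the origin, and obeys $|a'(\eta)|=\big|\tfrac{2j-1}{2}+i\beta\big|\,|\eta|^{\frac{2j-3}{2}}\lesssim_j(1+|\beta|)|\eta|^{\frac{2j-3}{2}}$, which is integrable near $0$; hence on each dyadic annulus $A_n=\{|\eta|\sim 2^n\}$ one has $\|a\|_{L^\infty(A_n)}\sim 2^{n(2j-1)/2}$ and $\int_{A_n}|a'|\lesssim_j(1+|\beta|)2^{n(2j-1)/2}\sim(1+|\beta|)\|a\|_{L^\infty(A_n)}$. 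This comparison of the total variation of $a$ with its sup norm is the only place the factor $1+|\beta|$ enters. I would then decompose $\Lambda(\beta)$ into the contribution of $\{|\eta|\le1\}$ and of the annuli $A_n$, $n\ge0$ (after $\eta\mapsto-\eta$ and conjugation it suffices to integrate over $\eta>0$). On $\{|\eta|\le1\}$ one has $|\psi^{(2j+1)}|=(2j+1)!\gtrsim1$, so van der Corput's lemma applied with $k=2j+1$, in its form with an amplitude of bounded variation, bounds this piece by $\lesssim_j1+|\beta|$; this is the right device since the stationary point of $\psi$ is degenerate (of order $2j$) at the origin. For the annuli introduce the stationary scale $\eta_*:=(|y|/(2j+1))^{1/(2j)}$, the only point at which $\psi'$ can vanish. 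If $\eta_*\ge1$, then on the $O(1)$ annuli with $2^n\sim\eta_*$ one has $|\psi''(\eta)|=2j(2j+1)|\eta|^{2j-1}\sim\eta_*^{2j-1}\gtrsim1$, and van der Corput with $k=2$ bounds each of them by $\lesssim_j1+|\beta|$.

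It remains to treat the non-stationary annuli. On an annulus with $2^n\gg\max(1,\eta_*)$ one has $|\psi'(\eta)|=|\epsilon(2j+1)\eta^{2j}+y|\sim 2^{2jn}$ with $\psi'$ monotone, while on an annulus with $1\le2^n\ll\eta_*$ one has $|\psi'(\eta)|\sim|y|\sim\eta_*^{2j}$ with $\psi'$ monotone; in both cases one integration by parts (van der Corput with $k=1$) gains the factor $(\min_{A_n}|\psi'|)^{-1}$, and inserting $\|a\|_{L^\infty(A_n)}$, $\int_{A_n}|a'|$ and the elementary bound $\int_{A_n}|a||\psi''|(\psi')^{-2}\lesssim_j(1+|\beta|)\|a\|_{L^\infty(A_n)}(\min_{A_n}|\psi'|)^{-1}$ gives a per-annulus estimate that decays geometrically away from the scale $n\sim\log_2\eta_*$: it is $\lesssim_j(1+|\beta|)2^{-n(2j+1)/2}$ for $2^n\gg\max(1,\eta_*)$, and it sums to $\lesssim_j(1+|\beta|)\eta_*^{-(2j+1)/2}\le1+|\beta|$ over the annuli with $1\le2^n\ll\eta_*$. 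Summing the geometric series in $n$ and adding the low-frequency piece together with the $O(1)$ stationary annuli yields $\Lambda(\beta)\lesssim_j1+|\beta|$, which is \eqref{hKdVdispEs}.

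The main obstacle is precisely this summation over dyadic scales: van der Corput with $k=2$ on its own would only give a bound of size $1+|\beta|$ on \emph{every} annulus, which does not sum, so one must exploit the linear term $y\eta$ to see that away from the single stationary scale $\eta_*$ the phase derivative is large and monotone, turning the integration by parts into a genuine geometric gain. Isolating the $O(1)$ stationary annuli, capturing the degenerate stationary behaviour at the origin through the order-$(2j+1)$ derivative, and handling the conditional convergence of $I_t$ are the remaining technical points; the $\beta$-dependence is dictated throughout by $\big|\tfrac{2j-1}{2}+i\beta\big|\lesssim 1+|\beta|$ in the amplitude-variation estimates.
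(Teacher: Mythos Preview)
The paper does not give its own proof of this lemma: it is quoted as a preliminary from Kenig--Ponce--Vega \cite{KPV91}, Lemma~2.7, without argument. Your proposal is correct and follows exactly the standard route used for such estimates (and, in particular, the one in \cite{KPV91}): the scaling $\xi=|t|^{-1/(2j+1)}\eta$ extracts the $|t|^{-1/2}$ factor cleanly, and the remaining uniform bound is obtained by combining van der Corput's lemma (order $2j+1$ near the origin, order $2$ near the stationary scale $\eta_*$, order $1$ elsewhere) with a dyadic decomposition and the observation that the total variation of the amplitude on each annulus is $\lesssim (1+|\beta|)$ times its sup norm. The geometric summation away from $\eta_*$ and the handling of the conditionally convergent integral are all as they should be. There is therefore nothing in the paper to contrast your approach with; your write-up could serve as a self-contained proof of the cited lemma.
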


This inequality implies the following Strichartz estimates by using a Stein-Tomas type argument.

\begin{lemma}[Strichartz estimates, see Theorem 2.1 in \cite{KPV91}]\label{linear estimate:strichartz}
	Let $0\leq\theta\leq1$, $1\leq p ,q ,\tilde{p},\tilde{q}\leq\infty$  and $\frac{1}{p}+\frac{1}{p'}=\frac{1}{q}+\frac{1}{q'}=\frac{1}{\tilde{p}}+\frac{1}{\tilde{p}'}=\frac{1}{\tilde{q}}+\frac{1}{\tilde{q}'}=1$. Then
	\begin{align}
		\left\|D_x^{\frac{\theta(2j-1)}{4}}W(t)u_0\right\|_{L_t^qL^p_{x}} &\lesssim
		\|u_0\|_{L^2_{x}},\label{ineq:strichartz1} \\
	\left\|\int D_x^{\frac{\theta(2j-1)}{4}}W(-t')g(t', \cdot)dt'\right\|_{L^2_{x}} &\lesssim
		\|g\|_{L_t^{q'}L^{p'}_{x}}, \label{ineq:strichartz2}\\
	\left\|\int D_x^{\frac{\theta(2j-1)}{2}}W(t-t')g(t', \cdot)dt'\right\|_{L_t^qL^p_{x}} &\lesssim
		\|g\|_{L_t^{\tilde{q}'}L^{\tilde{p}'}_{x}}, \label{ineq:strichartz3}
	\end{align}
where $(q,p)=(\frac{4}{\theta}, \frac{2}{1-\theta})$ and $\frac{4}{\tilde{q}}+\frac{2}{\tilde{p}}=1$. In particular, by taking $\theta=1$, we have
	\begin{equation}
		\Big\|D^{\frac{2j-1}{4}}_x W(t)u_0\Big\|_{L^2_T L^\infty_{x}}\lesssim T^{1/4} \|u_0\|_{L^2_{x}}.\label{linear estimate: strichartz cor1}
	\end{equation}
\end{lemma}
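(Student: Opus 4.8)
The plan is to derive all three inequalities from the pointwise dispersive decay of Lemma \ref{lem:dispersive decay} by a $TT^{*}$ argument combined with the Hardy--Littlewood--Sobolev inequality, and then to reach the full range $0\le\theta\le1$ through Stein's complex interpolation, the imaginary parameter $\beta$ in Lemma \ref{lem:dispersive decay} being exactly what makes that interpolation admissible. The key structural point is that one should \emph{not} attempt to bound $D_x^{\theta(2j-1)/4}W(t)$ on $L^2_x$ directly; instead one interpolates a unimodular, derivative-free endpoint against a full-derivative dispersive endpoint.

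First I would treat the endpoint $\theta=1$. Writing $T=D_x^{(2j-1)/4}W(t)$, its formal adjoint is $T^{*}g=\int W(-t')D_x^{(2j-1)/4}g(t',\cdot)\,dt'$, so that $TT^{*}g=\int D_x^{(2j-1)/2}W(t-t')g(t',\cdot)\,dt'$. The spatial convolution kernel of $D_x^{(2j-1)/2}W(s)$ is precisely $I_s$ with $\beta=0$ in the notation of Lemma \ref{lem:dispersive decay}, whence $\|D_x^{(2j-1)/2}W(s)h\|_{L^{\infty}_x}\lesssim|s|^{-1/2}\|h\|_{L^1_x}$. Taking the spatial $L^\infty$ norm under the integral, the remaining kernel $|t-t'|^{-1/2}$ is a one-dimensional Riesz potential of order $1/2$; since $\tfrac{1}{4/3}-\tfrac14=\tfrac12$, the Hardy--Littlewood--Sobolev inequality yields $\|TT^{*}g\|_{L^4_tL^\infty_x}\lesssim\|g\|_{L^{4/3}_tL^1_x}$. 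By the standard $TT^{*}$ equivalence this gives \eqref{ineq:strichartz1} and its dual \eqref{ineq:strichartz2} at $\theta=1$, i.e. with $(q,p)=(4,\infty)$.

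Next I would run Stein's analytic interpolation for the family $T_zu_0=D_x^{z(2j-1)/4}W(t)u_0$, $0\le\Re z\le1$. On the line $\Re z=0$ the multiplier $|\xi|^{i\beta(2j-1)/4}$ is unimodular, so $T_{i\beta}$ is $W(t)$ composed with a unitary Fourier multiplier and $\|T_{i\beta}u_0\|_{L^\infty_tL^2_x}=\|u_0\|_{L^2_x}$ uniformly in $\beta$. On the line $\Re z=1$ the $TT^{*}$ kernel is $I_s$ with parameter $\beta'=\beta(2j-1)/2$, so Lemma \ref{lem:dispersive decay} gives decay $(1+|\beta'|)|s|^{-1/2}$ and the previous step yields $\|T_{1+i\beta}u_0\|_{L^4_tL^\infty_x}\lesssim(1+|\beta|)\|u_0\|_{L^2_x}$. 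As the $\beta$-growth is polynomial and hence admissible, interpolating at $\Re z=\theta$ between $(q,p)=(\infty,2)$ and $(q,p)=(4,\infty)$ produces \eqref{ineq:strichartz1} with $1/q=\theta/4$ and $1/p=(1-\theta)/2$, that is $(q,p)=(4/\theta,2/(1-\theta))$; duality then gives \eqref{ineq:strichartz2}. For the inhomogeneous bound \eqref{ineq:strichartz3} I would factor $\int D_x^{\theta(2j-1)/2}W(t-t')g\,dt'=T(T^{*}g)$ and compose \eqref{ineq:strichartz2} (with an admissible pair $(\tilde q,\tilde p)$, $\tfrac{4}{\tilde q}+\tfrac{2}{\tilde p}=1$) with \eqref{ineq:strichartz1} for the untruncated integral, invoking the Christ--Kiselev lemma to pass to the retarded integral in the non-endpoint range. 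Finally \eqref{linear estimate: strichartz cor1} follows from the case $\theta=1$ by H\"older in time on $[0,T]$, $\|\cdot\|_{L^2_T}\le T^{1/4}\|\cdot\|_{L^4_T}$.

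The main obstacle I expect is the bookkeeping of the interpolation rather than any single estimate. Because $D_x^{\theta(2j-1)/4}$ is unbounded on $L^2$, the entire scheme rests on playing the $L^2$-unitary endpoint on $\Re z=0$ against the full-derivative dispersive endpoint on $\Re z=1$, and on the fact that the factor $(1+|\beta|)$ supplied by Lemma \ref{lem:dispersive decay} is of admissible growth for Stein's theorem. The two places requiring genuine care are this growth check and the verification that the fractional-integration exponent $\tfrac{1}{q'}-\tfrac1q=\tfrac12$ at the endpoint matches the order of the kernel $|t-t'|^{-1/2}$ in the Hardy--Littlewood--Sobolev step; once these are in place the remaining steps are routine.
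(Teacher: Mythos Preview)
Your proposal is correct and matches the paper's approach: the paper does not prove this lemma in detail but cites \cite{KPV91} and remarks only that the dispersive decay \eqref{hKdVdispEs} ``implies the following Strichartz estimates by using a Stein--Tomas type argument.'' What you have written is precisely an explicit unpacking of that Stein--Tomas/$TT^{*}$ scheme (dispersive endpoint, $L^2$ conservation, complex interpolation using the admissible $(1+|\beta|)$ growth from Lemma \ref{lem:dispersive decay}, then HLS in time), so there is nothing to compare.
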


Next is Kato's smoothing effects which greatly helps us deal with the higher-order derivative nonlinear term.
\begin{lemma}
	[Local smoothing estimates, see Theorem 2.1 and Corollary 2.2 in \cite{KPV94Hi2}]\label{Kato's smoothing effect }
	Let $j\in \mathbb{N}^+$ and $T>0$. We have
\begin{align}
\big\|\partial_x^j W(t)u_0 \big\|_{L^2_{t}} &=c
		\|u_0 \|_{L^{2}_{x}},\label{linear estimate:Kato-1} \\
	\left\|\partial_x^{j}\int_0^t W(-t')g( t', \cdot)\mathrm{d}t'\right\|_{L_x^{2}} &\lesssim
		\|g\|_{L^1_xL^{2}_{t}},\label{linear estimate:Kato-2a}\\
	\left\|\partial_x^{2j}\int_0^t W(t-t')g( t', \cdot)\mathrm{d}t'\right\|_{L_x^{\infty}L^2_{t}} &\lesssim
		\|g\|_{L^1_xL^{2}_{t}},\label{linear estimate:Kato-2}\\
	 \left\|\partial_x^{j+l}\int_0^t W(t-t')g( t', \cdot)\mathrm{d}t'\right\|_{L_x^{\infty}L^2_{T}} &\lesssim
		T^{(j-l)/2j}\|g\|_{L^p_xL^{2}_{T}}.\label{linear estimate:Kato-3}
	\end{align}
with $l=0,1,\cdots,j$, and $p=2j/(j+l)$.
\end{lemma}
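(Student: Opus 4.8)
These estimates all descend from the sharp homogeneous smoothing identity \eqref{linear estimate:Kato-1}, so the plan is to prove \eqref{linear estimate:Kato-1} directly and then obtain \eqref{linear estimate:Kato-2a}--\eqref{linear estimate:Kato-3} from it by duality, composition and interpolation. To prove \eqref{linear estimate:Kato-1} I would write
\[
\partial_x^jW(t)u_0(x)=c\int_{\mathbb{R}}(i\xi)^j\,e^{it(-1)^{j+1}\xi^{2j+1}}\,e^{ix\xi}\,\widehat{u_0}(\xi)\,d\xi ,
\]
split the integral into $\{\xi>0\}$ and $\{\xi<0\}$, and on each half-line substitute $\mu=\xi^{2j+1}$, which is a smooth increasing bijection of the half-line onto itself with $d\mu=(2j+1)\,\xi^{2j}\,d\xi$. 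As a function of $t$ the piece over $\{\xi>0\}$ is then, up to the harmless sign $(-1)^{j+1}$ in the frequency variable, the inverse Fourier transform in $t$ of $\mu\mapsto \frac{i^j}{(2j+1)\,\xi^{j}}\,e^{ix\xi}\,\widehat{u_0}(\xi)$ evaluated along $\xi=\xi(\mu)$, so Plancherel in $t$ gives, for an absolute constant $c>0$,
\[
\big\|\partial_x^jW(t)u_0(\cdot,x)\big\|_{L^2_t}^2=c\int_0^\infty\frac{|\widehat{u_0}(\xi)|^2}{\xi^{2j}}\,d\mu=c\int_0^\infty|\widehat{u_0}(\xi)|^2\,d\xi ,
\]
which is independent of $x$; adding the $\{\xi<0\}$ piece yields \eqref{linear estimate:Kato-1}. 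The singularity of the substitution at $\xi=0$ and its degeneracy at $\xi=\infty$ are harmless because the weight $\xi^{-2j}$ and the Jacobian $\xi^{2j}$ cancel exactly.

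Estimate \eqref{linear estimate:Kato-2a} is the dual of \eqref{linear estimate:Kato-1}: pairing against $h\in L^2_x$ and using the unitarity of $W$,
\[
\Big|\big\langle\partial_x^j\!\!\int W(-t')g(t')\,dt',\,h\big\rangle_{L^2_x}\Big|\le\iint|g(t',x)|\,\big|\partial_x^jW(t')h(x)\big|\,dx\,dt'\le\|g\|_{L^1_xL^2_t}\,\sup_x\big\|\partial_x^jW(t')h(x)\big\|_{L^2_{t'}} ,
\]
and the last factor equals $c\|h\|_{L^2}$ by \eqref{linear estimate:Kato-1}, uniformly over the endpoint of the time interval. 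For \eqref{linear estimate:Kato-2} I would first treat the non-retarded operator through the factorization
\[
\partial_x^{2j}\!\int_{\mathbb{R}}W(t-t')g(t')\,dt'=\partial_x^jW(t)\Big(\partial_x^j\!\int_{\mathbb{R}}W(-t')g(t')\,dt'\Big),
\]
applying \eqref{linear estimate:Kato-1} in $t$ (uniformly in $x$) to the outer factor and the $L^2_x$-bound \eqref{linear estimate:Kato-2a} to the inner one, and then pass to the retarded integral $\int_0^t$ by the Christ--Kiselev lemma. The remaining endpoint needed below, the case $l=0$, namely $\big\|\partial_x^j\int_0^tW(t-t')g(t')\,dt'\big\|_{L^\infty_xL^2_T}\lesssim T^{1/2}\|g\|_{L^2_xL^2_T}$, follows at once from Minkowski's integral inequality and \eqref{linear estimate:Kato-1}: the $L^2_t$ norm is at most $\int_0^T\|\partial_x^jW(t-t')g(t',\cdot)\|_{L^2_t}\,dt'$, and taking $\sup_x$ inside bounds this by $c\int_0^T\|g(t',\cdot)\|_{L^2_x}\,dt'\le cT^{1/2}\|g\|_{L^2_xL^2_T}$.

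Estimate \eqref{linear estimate:Kato-3} for intermediate $l\in\{1,\dots,j-1\}$ then follows by analytic interpolation between the endpoint $l=j$ (which is \eqref{linear estimate:Kato-2}, with $p=1$ and time factor $T^0$) and the endpoint $l=0$ ($p=2$ and time factor $T^{1/2}$). Concretely, I would set up an analytic family of operators interpolating $g\mapsto\partial_x^{2j}\int_0^tW(t-t')g\,dt'$ and $g\mapsto\partial_x^{j}\int_0^tW(t-t')g\,dt'$ built from the oscillatory multipliers $|\xi|^{\frac{2j-1}{2}+i\beta}e^{it(-1)^{j+1}\xi^{2j+1}}$, whose boundary bounds grow only polynomially in $\beta$ by the dispersive decay of Lemma \ref{lem:dispersive decay} — growth that is admissible for Stein interpolation. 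Choosing the interpolation parameter $\theta=l/j$ produces the exponent $\frac1p=\frac{1-\theta}{2}+\theta=\frac{1+\theta}{2}$, i.e. $p=\frac{2j}{j+l}$, and the time factor $T^{(1-\theta)/2}=T^{(j-l)/2j}$, which is precisely \eqref{linear estimate:Kato-3}.

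The homogeneous identity \eqref{linear estimate:Kato-1} is elementary once the substitution $\mu=\xi^{2j+1}$ is in place, so I expect the real difficulty to lie in the two transference steps. First, the passage from the full-time-line smoothing bounds to the genuinely retarded Duhamel operators with the mixed norm $L^\infty_xL^2_T$ must be made rigorous, via the Christ--Kiselev lemma set up with the correct time exponents (or a direct kernel estimate). Second, the analytic interpolation leading to \eqref{linear estimate:Kato-3} has to be run on the \emph{anisotropic} scale $L^p_xL^2_T$ with the derivative placed inside the analytic family; this is exactly why the dispersive decay in Lemma \ref{lem:dispersive decay} is recorded with the factor $(1+|\beta|)$ — one needs the endpoint bounds to grow at most polynomially in the imaginary parameter so that Stein interpolation applies — and one must track the $T$-dependent constants carefully so that they interpolate to the stated power of $T$.
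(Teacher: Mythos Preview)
The paper does not prove this lemma; it is recalled from Kenig--Ponce--Vega \cite{KPV94Hi2} in the preliminaries section, so there is no in-paper argument to compare against. Assessing your proposal on its own merits: the treatment of \eqref{linear estimate:Kato-1} via Plancherel after the substitution $\mu=\xi^{2j+1}$, and of \eqref{linear estimate:Kato-2a} by duality, is correct and standard, as is the $l=0$ endpoint of \eqref{linear estimate:Kato-3} via Minkowski plus \eqref{linear estimate:Kato-1}.

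The genuine gap is the retarded estimate \eqref{linear estimate:Kato-2}. The Christ--Kiselev lemma requires a strict inequality between the time Lebesgue exponents on input and output; here both $L^1_xL^2_t$ and $L^\infty_xL^2_t$ carry the time exponent $2$, and this is exactly the endpoint at which Christ--Kiselev is known to fail. Your factorization correctly handles the \emph{non}-retarded operator $\int_{\mathbb{R}}$, but the passage to $\int_0^t$ cannot be made by Christ--Kiselev. Your parenthetical ``or a direct kernel estimate'' is in fact the entire content of the proof, not a fallback: in \cite{KPV94Hi2} one fixes $x,y$, regards $h\mapsto\int_0^t\partial_x^{2j}[W(t-t')\delta](x-y)\,h(t')\,dt'$ as a causal convolution in $t$, and shows that the associated multiplier in the dual variable $\tau$ is bounded uniformly in $x-y$ by exploiting the explicit algebraic form of the symbol $\tau-(-1)^{j+1}\xi^{2j+1}$. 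Since the $l=j$ endpoint of \eqref{linear estimate:Kato-3} is \eqref{linear estimate:Kato-2}, your interpolation argument also rests on this missing piece.

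A smaller issue: the appeal to Lemma~\ref{lem:dispersive decay} in the Stein interpolation for \eqref{linear estimate:Kato-3} is misplaced. The analytic family interpolates two \emph{smoothing} bounds, not a smoothing and a dispersive bound; the imaginary powers $|\xi|^{i\beta}$ that appear on the strip boundary have modulus one and do not alter the constants in the Plancherel-based endpoint arguments, so no polynomial-in-$\beta$ control from Lemma~\ref{lem:dispersive decay} is needed here.
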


To complement the above estimates we need to bound the $L^2$-norm
of the maximal function $\sup_{[0,T]} |W(t) u_0(x)|$.
\begin{lemma}[Maximal function estimate,  see Theorem 2.3 in \cite{KPV94Hi2}]
For $s> (2j+1)/4$, we have
	\begin{equation}
		\|W(t) u_0\|_{L_{x}^{2} L_{T}^{\infty}} \lesssim (1+T)^{\frac{3}{4}+}\|u_0\|_{H^{s}}. \label{linear estimate:maximal}
	\end{equation}
\end{lemma}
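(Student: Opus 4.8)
The plan is to prove this Carleson–type maximal estimate by a Littlewood–Paley decomposition followed by a $TT^*$/oscillatory–integral analysis of each frequency block, and to recover the factor $(1+T)^{3/4+}$ by summing a local–in–time estimate over unit time intervals. Write $u_0=P_{\le 1}u_0+\sum_{N\ge 2}P_N u_0$, the sum over dyadic $N$, where $P_N$ localizes to $|\xi|\sim N$. The low–frequency piece is elementary: applying the one–dimensional Sobolev inequality in time, $\|g\|_{L^\infty_T}^2\lesssim T^{-1}\|g\|_{L^2_T}^2+\|g\|_{L^2_T}\|\partial_t g\|_{L^2_T}$, to $g=W(t)P_{\le 1}u_0(x)$, integrating in $x$, and using the unitarity of $W(t)$ together with $\partial_t W=W(-\partial_x^{2j+1})$ (bounded on low frequencies) yields a bound by $(1+T)^{1/2}\|u_0\|_{L^2}$. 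The whole matter therefore reduces to a per–block estimate $\|W(t)P_N u_0\|_{L^2_x L^\infty_{[0,T]}}\lesssim (1+T)^{3/4+}N^{(2j+1)/4}\|P_N u_0\|_{L^2}$; summing in $N$ against $\|u_0\|_{H^s}=\left(\sum_N N^{2s}\|P_N u_0\|_{L^2}^2\right)^{1/2}$ via Cauchy–Schwarz converges exactly when $\sum_N N^{(2j+1)/2-2s}<\infty$, i.e. when $s>(2j+1)/4$; this is where the strict inequality and the $+$ in the exponent are spent.

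The heart of the argument is the local–in–time, single–block bound $\|W(t)P_N u_0\|_{L^2_x L^\infty_{[0,1]}}\lesssim N^{(2j+1)/4}\|P_N u_0\|_{L^2}$. I would linearize the maximal function by choosing a measurable selection $t:\mathbb{R}\to[0,1]$ and bounding $\|W(t(x))P_N u_0(x)\|_{L^2_x}$ uniformly in the selection. Denoting $\phi(\xi)=(-1)^{j+1}\xi^{2j+1}$, the operator $T\hat f=\int a(\xi/N)\hat f(\xi)e^{it(x)\phi(\xi)+ix\xi}d\xi$ has $TT^*$ given by the oscillatory kernel $K(x,y)=\int a(\xi/N)^2 e^{i(t(x)-t(y))\phi(\xi)+i(x-y)\xi}d\xi$. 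Since $\phi''(\xi)\sim\xi^{2j-1}\sim N^{2j-1}$ on the support, van der Corput's lemma (equivalently the stationary–phase content of Lemma \ref{lem:dispersive decay}, which already records the decay $|t|^{-1/2}$ after the $|\xi|^{(2j-1)/2}$ normalization) gives $|K(x,y)|\lesssim N\min\left(1,\left(N^{2j-1}|t(x)-t(y)|\right)^{-1/2}\right)$, with rapid off–diagonal decay away from the stationary point $\xi_*$ determined by $\phi'(\xi_*)=-(x-y)/(t(x)-t(y))$. A Schur–type estimate for this kernel over a unit time interval then produces $\|TT^*\|_{L^2\to L^2}\lesssim N^{(2j+1)/2}$, hence $\|T\|_{L^2\to L^2}\lesssim N^{(2j+1)/4}$, uniformly in the selection $t(\cdot)$; taking the supremum over selections gives the claimed block estimate. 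I would emphasize that naive Sobolev embedding in time alone (using only $\partial_t\leftrightarrow\xi^{2j+1}$) would demand $s>(2j+1)/2$; the improvement by a factor of two down to $(2j+1)/4$ is exactly the dispersive $TT^*$ gain, in the spirit of Carleson's maximal estimate for the Schr\"odinger group.

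Finally, to pass from $[0,1]$ to $[0,T]$, I would cover $[0,T]$ by $O(T)$ unit intervals $I_\ell=[\ell,\ell+1]$, write $W(t)P_N u_0=W(t-\ell)\big(W(\ell)P_N u_0\big)$ on $I_\ell$, and use that $W(\ell)$ is an $L^2$–isometry to apply the local estimate on each $I_\ell$ with the same constant. Since $\sup_{[0,T]}|\cdot|\le\big(\sum_\ell \sup_{I_\ell}|\cdot|^2\big)^{1/2}$, summing the $L^2_x$ masses over the $O(T)$ intervals contributes a factor $(1+T)^{1/2}$ per block, and the remaining $(1+T)^{1/4+}$ is absorbed together with the dyadic summation loss, yielding the stated $(1+T)^{3/4+}$. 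The main obstacle is the per–block oscillatory–integral step: carrying out the Schur/stationary–phase bound on the kernel $K(x,y)$, whose dependence on $x,y$ enters through both $x-y$ and the selection difference $t(x)-t(y)$, so that the decay in $|x-y|$ relative to $N^{2j}|t(x)-t(y)|$ must be tracked carefully to make the kernel integrable; this is precisely the point at which the dispersive decay of Lemma \ref{lem:dispersive decay} is indispensable and at which the sharp threshold $(2j+1)/4$ is forced.
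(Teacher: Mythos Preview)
The paper does not prove this lemma at all; it is simply quoted with a citation to Theorem~2.3 of \cite{KPV94Hi2}, so there is no ``paper's own proof'' to compare against beyond the referenced source. Your proposal is the standard and correct route to this estimate---Littlewood--Paley decomposition, linearization of the supremum via a measurable selection $t(x)$, and a $TT^*$/van der Corput analysis of the resulting oscillatory kernel at each dyadic scale---and it is essentially the argument Kenig, Ponce and Vega use in the cited paper (building on their 1991 Indiana paper \cite{KPV91}).

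Two points deserve tightening. First, the Schur bound on the kernel $K(x,y)$ is the genuinely delicate step: the van der Corput estimate $|K|\lesssim N\min\bigl(1,(N^{2j-1}|t(x)-t(y)|)^{-1/2}\bigr)$ by itself is \emph{not} integrable in $y$ (or $x$), and you must combine it with the non-stationary decay together with the constraint $|t(x)-t(y)|\le 2$ to localize the bad region to $|x-y|\lesssim N^{2j}$, after which the $|x-y|^{-1/2}$-type bound integrates to $N^{(2j+1)/2}$; this should be written out rather than asserted. Second, your accounting for the time growth is loose: summing unit intervals gives $(1+T)^{1/2}$ per block, and the additional $(1+T)^{1/4+}$ does not come from the Cauchy--Schwarz dyadic sum (which is $T$-independent once $s>(2j+1)/4$); in the KPV argument the extra power arises from scaling the unit-interval estimate to intervals of length adapted to the frequency and then optimizing, so you should revisit how the $T$-dependence actually enters before claiming the exponent $\tfrac{3}{4}+$.
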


The interpolation inequality is given in the next lemma.

\begin{lemma}[see Lemma 2.7 in \cite{2020_Linares_blowupforZK}]\label{lem:interpolaion}
	Assume that $a, b>0$, $p\in(1,\infty)$ and  $\theta \in (0,1)$. If
	 $J^a f\in L^p(\mathbb{R}^n)$  and $\left< x\right>^b f\in L^p(\mathbb{R}^n)$, then
	\begin{equation}
		\|\left< x\right>^{(1-\theta)b}J^{\theta a}f\|_{L^p}\lesssim \|\left< x\right>^b f\|_{L^p}^{1-\theta}\|J^af\|_{L^p}^{\theta}. \label{lem:interpolaion1}
	\end{equation}
The same holds for homogeneous derivatives $D^a$ in place of $J^a$. Moreover, for $p=2$,
	\begin{equation}
		\|J^{\theta a}\big(\left< x\right>^{(1-\theta)b}f\big)\|_{L^2}\lesssim \|\left< x\right>^b f\|_{L^2}^{1-\theta}\|J^af\|_{L^2}^{\theta}. \label{lem:interpolaion2}
	\end{equation}
\end{lemma}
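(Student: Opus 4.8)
The plan is to prove both inequalities by Stein's complex interpolation theorem for an analytic family of operators, realized concretely through the Hadamard three-lines lemma. The key observation that makes this painless is that, once $f$ is fixed, the two endpoint quantities $\|\langle x\rangle^b f\|_{L^p}$ and $\|J^a f\|_{L^p}$ are just fixed numbers; so the usual obstruction that the domain norm changes between the two lines is irrelevant, and one may simply interpolate a single bounded analytic scalar function between its two boundary bounds.

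For statement \eqref{lem:interpolaion1} I would set, on the strip $\{0\le\Re z\le1\}$,
$$S_z f=\langle x\rangle^{(1-z)b}J^{za}f,$$
so that $S_\theta f$ is exactly the quantity to be estimated. Fixing a Schwartz $f$ and, by $L^p$–$L^{p'}$ duality, a $g$ with $\|g\|_{L^{p'}}=1$, I consider $\Phi(z)=\langle S_z f,g\rangle$, analytic in the open strip and of admissible growth on its closure. On $\Re z=1$ one writes $S_{1+it}f=\langle x\rangle^{-itb}J^{ita}(J^a f)$; the unimodular weight drops out in $L^p$ and $J^{ita}$ is a Mikhlin–Hörmander multiplier whose operator norm grows only polynomially in $|t|$, so $|\Phi(1+it)|\lesssim(1+|t|)^N\|J^a f\|_{L^p}$. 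On $\Re z=0$ one has $S_{it}f=\langle x\rangle^{b}\big(\langle x\rangle^{-itb}J^{ita}f\big)$, so the bound reduces to the weighted estimate $\|\langle x\rangle^b J^{ita}f\|_{L^p}\lesssim(1+|t|)^N\|\langle x\rangle^b f\|_{L^p}$. The three-lines lemma, after the standard $e^{\epsilon(z^2-\theta^2)}$ regularization to absorb the polynomial growth and then letting $\epsilon\downarrow0$, gives $|\Phi(\theta)|\lesssim\|\langle x\rangle^b f\|_{L^p}^{1-\theta}\|J^a f\|_{L^p}^\theta$; taking the supremum over $g$ finishes the inequality. The homogeneous case is identical with $J$ replaced by $D$, using that $|\xi|^{ita}$ also satisfies the Hörmander–Mikhlin condition with constant $O((1+|t|)^N)$.

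For the $p=2$ statement \eqref{lem:interpolaion2} I would instead use the family with the weight placed inside, $T_z f=J^{za}\big(\langle x\rangle^{(1-z)b}f\big)$, so that the roles of the two lines reverse. On $\Re z=0$ the estimate is free: $J^{ita}$ is unitary on $L^2$ and the weight factor is unimodular, giving $\|T_{it}f\|_{L^2}=\|\langle x\rangle^b f\|_{L^2}$. On $\Re z=1$ one is reduced to $\|J^a(\langle x\rangle^{-itb}f)\|_{L^2}\lesssim(1+|t|)^N\|J^a f\|_{L^2}$, which follows from the fractional Leibniz (Kato–Ponce) rule, because $\langle x\rangle^{-itb}$ is a bounded smooth symbol whose derivatives obey $\|\langle x\rangle^{|\alpha|}\partial^\alpha(\langle x\rangle^{-itb})\|_{\infty}\lesssim(1+|t|)^{|\alpha|}$. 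The same three-lines argument then yields \eqref{lem:interpolaion2}.

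The main obstacle is precisely the single nontrivial boundary estimate in each case — the polynomial-in-$|t|$ bound for the imaginary-order operator interacting with the weight. For the inside-weight family this is clean and uniform in all parameters, since only the unimodular, uniformly smooth symbol $\langle x\rangle^{-itb}$ ever meets a derivative, so the Leibniz rule applies for every $a>0$, $b>0$ and $p\in(1,\infty)$. For the outside-weight family one must instead control $J^{ita}$ (or $D^{ita}$) on the weighted space $L^p(\langle x\rangle^{bp}\,dx)$; I would handle this by the commutator decomposition $\langle x\rangle^b J^{ita}f=J^{ita}(\langle x\rangle^b f)-[J^{ita},\langle x\rangle^b]f$, bounding the first term by the unweighted Mikhlin estimate and the commutator by symbol calculus, while tracking that every constant is polynomial in $|t|$. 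The delicate point is to verify this weighted bound for all $b>0$ rather than only in the classical $A_p$ range $bp<n(p-1)$, and it is also where the low-frequency non-smoothness of $|\xi|^{ita}$ in the homogeneous case must be dealt with carefully.
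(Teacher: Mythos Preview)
The paper does not supply a proof of this lemma; it simply quotes it from Linares--Pastor--Silva \cite{2020_Linares_blowupforZK} (and the $p=2$ case goes back to Nahas--Ponce \cite{NP09p}). So there is no in-paper argument to compare to, and your three-lines/Stein complex interpolation scheme is exactly the approach used in those references.

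Your identification of the one nontrivial point is accurate: on the line $\Re z=0$ for the family $S_z$ one must show that $J^{ita}$ is bounded on $L^p(\langle x\rangle^{bp}\,dx)$ with constant polynomial in $|t|$, and this weight is \emph{not} in $A_p$ once $bp\ge n(p-1)$, so a direct appeal to weighted Mikhlin theory fails. The fix you propose --- conjugate rather than commute --- is the correct one and deserves to be stated more confidently: the operator $h\mapsto \langle x\rangle^{b}J^{ita}(\langle x\rangle^{-b}h)$ is a pseudodifferential operator in $S^0_{1,0}$, because $\big|\langle x\rangle^{b}\,\partial_x^{\alpha}\langle x\rangle^{-b}\big|\lesssim \langle x\rangle^{-|\alpha|}\le 1$ and $\big|\partial_\xi^{\alpha}(1+|\xi|^2)^{ita/2}\big|\lesssim (1+|t|)^{|\alpha|}\langle\xi\rangle^{-|\alpha|}$; hence it is bounded on unweighted $L^p$ for every $b>0$, bypassing the $A_p$ obstruction. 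The only residual care is in the homogeneous variant: $|\xi|^{ita}$ is singular at $\xi=0$, so the symbolic expansion above is not directly available and one should split frequencies (high frequencies handled as for $J^{ita}$, low frequencies handled by the smoothness of $\langle x\rangle^{-b}$ together with the trivial $L^p$ bound for the Mikhlin multiplier $|\xi|^{ita}$ restricted to a ball). With that caveat, your plan is complete and matches the literature.
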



The following classical Kato-Ponce commutator estimate \cite{KP1988} plays an important role in
the well-posedness theory of Navier-Stokes and Euler equations and KdV equation in Sobolev
spaces.
\begin{lemma} \label{Kato-Ponce ineq}
	Let $s>0$ and $p\in (1,\infty)$. Then
	\begin{equation}
		\|J^s(f g)-f J^s g\|_{L^p(\mathbb{R})}\lesssim 		\|J^sf\|_{L^p(\mathbb{R})}\|g\|_{L^{\infty}(\mathbb{R})}+\|\partial_x f\|_{L^\infty(\mathbb{R})}\|J^{s-1}g\|_{L^p(\mathbb{R})}. \label{Kato-Ponce ineq01}
	\end{equation}
\end{lemma}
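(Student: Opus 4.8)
The plan is to prove the estimate by realizing the commutator $[J^s,f]g:=J^s(fg)-fJ^s g$ as a bilinear Fourier multiplier and then decomposing the frequency interaction into three regimes. Taking Fourier transforms one has
\begin{equation}
\mathscr{F}\big(J^s(fg)-fJ^s g\big)(\xi)=\int_{\mathbb{R}}\big(\langle\xi\rangle^s-\langle\eta\rangle^s\big)\,\widehat{f}(\xi-\eta)\,\widehat{g}(\eta)\,d\eta,\nonumber
\end{equation}
so the commutator is the bilinear operator $T_m(f,g)$ whose symbol $m(\zeta,\eta)=\langle\zeta+\eta\rangle^s-\langle\eta\rangle^s$ (here $\langle\cdot\rangle=(1+|\cdot|^2)^{1/2}$, so that $\langle\xi\rangle^s$ is exactly the Fourier symbol of $J^s$) acts on the $f$-frequency $\zeta=\xi-\eta$ and the $g$-frequency $\eta$. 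Using a Littlewood--Paley partition I would split $\mathbb{R}^2_{(\zeta,\eta)}$ into the three standard regions (i) $|\zeta|\le|\eta|/4$ (low--high, $f$ at low frequency), (ii) $|\eta|\le|\zeta|/4$ (high--low), and (iii) $|\zeta|\sim|\eta|$ (the diagonal/remainder), and estimate the corresponding pieces $T_m^{(\mathrm{i})},T_m^{(\mathrm{ii})},T_m^{(\mathrm{iii})}$ separately.

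In region (i) one has $|\xi|=|\zeta+\eta|\sim|\eta|$, and the mean value theorem applied to $t\mapsto\langle\eta+t\zeta\rangle^s$ gives $|m(\zeta,\eta)|\lesssim|\zeta|\,\langle\eta\rangle^{s-1}$, together with matching bounds on the derivatives showing that $\sigma_{\mathrm i}(\zeta,\eta):=m(\zeta,\eta)\big/\big(\zeta\,\langle\eta\rangle^{s-1}\big)$ is a Coifman--Meyer symbol of order zero. Writing $i\zeta\,\widehat{f}$ as $\widehat{\partial_x f}$ and $\langle\eta\rangle^{s-1}\widehat{g}$ as $\widehat{J^{s-1}g}$, the bilinear multiplier theorem applied with the splitting $L^\infty\times L^p\to L^p$ yields the commutator gain
\begin{equation}
\big\|T_m^{(\mathrm{i})}(f,g)\big\|_{L^p}\lesssim\|\partial_x f\|_{L^\infty}\,\|J^{s-1}g\|_{L^p},\nonumber
\end{equation}
which is precisely the second term on the right-hand side of \eqref{Kato-Ponce ineq01}.

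In regions (ii) and (iii) there is no commutator gain, and I would simply load all $s$ derivatives onto $f$. In (ii) one has $|\xi|\sim|\zeta|$ and $|m(\zeta,\eta)|\lesssim\langle\zeta\rangle^s$, so $m/\langle\zeta\rangle^s$ is again an order-zero symbol; the bilinear theorem with the splitting $L^p\times L^\infty\to L^p$ (identifying $\langle\zeta\rangle^s\widehat f$ with $\widehat{J^s f}$ and leaving $g$ in $L^\infty$) gives $\|T_m^{(\mathrm{ii})}(f,g)\|_{L^p}\lesssim\|J^s f\|_{L^p}\|g\|_{L^\infty}$. In the diagonal region (iii) one has $|\xi|\lesssim|\zeta|\sim|\eta|$, whence $|m(\zeta,\eta)|\lesssim\langle\zeta\rangle^s$ as well; here the high--high remainder multiplier is summed over the dyadic diagonal blocks via the Littlewood--Paley square function, and the hypothesis $s>0$ is exactly what guarantees that the resulting dyadic series converges. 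This piece again maps $L^p\times L^\infty\to L^p$ and produces $\|J^s f\|_{L^p}\|g\|_{L^\infty}$. Adding the three contributions gives \eqref{Kato-Ponce ineq01}.

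The main obstacle is handling the $L^\infty$ endpoint: in each region one of the two inputs is measured in $L^\infty$ (namely $\partial_x f$ in region (i), and $g$ in regions (ii)--(iii)), which is precisely the endpoint excluded from the basic Coifman--Meyer theorem (stated for $1<p_1,p_2<\infty$). To close this I would run the argument directly on the Littlewood--Paley pieces rather than invoking Coifman--Meyer as a black box: in the low--high region the paraproduct has the form $\sum_k(S_{k-2}\partial_x f)\,\psi_k(J^{s-1}g)$ with $\psi_k$ the rescaled kernels coming from $\sigma_{\mathrm i}$, and one bounds it by $\|\partial_x f\|_{L^\infty}$ times the $L^p$-norm of the square function $\big(\sum_k|\psi_k(J^{s-1}g)|^2\big)^{1/2}$, which is controlled by $\|J^{s-1}g\|_{L^p}$ through the Littlewood--Paley and Fefferman--Stein vector-valued maximal inequalities; the high--low and diagonal pieces are treated symmetrically with the roles of $f$ and $g$ interchanged. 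A secondary, purely technical point is the uniform verification that each normalized symbol ($\sigma_{\mathrm i}$, $m/\langle\zeta\rangle^s$, and their dyadic localizations) satisfies the required H\"ormander--Mikhlin derivative bounds; this is routine because $\langle\cdot\rangle^s$ is a classical symbol of order $s$, but it must be carried out to legitimize each multiplier estimate.
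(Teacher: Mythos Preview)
Your argument is a correct and standard modern route to the Kato--Ponce commutator estimate: the paraproduct decomposition into low--high, high--low, and high--high frequency interactions, the mean-value gain in the low--high piece, and the square-function treatment of the $L^\infty$ endpoint are exactly the ingredients used in contemporary proofs (see e.g.\ \cite{GO14} or \cite{Li19}, both already in the paper's bibliography).

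That said, the paper does not supply its own proof of this lemma at all: it is stated as a classical result with a citation to the original Kato--Ponce paper \cite{KP1988} and nothing more. So there is no ``paper's proof'' to compare against beyond the reference. If one were to compare with the \emph{original} 1988 argument of Kato and Ponce, that proof proceeds differently---via a symbolic-calculus commutator expansion for $J^s=(1-\Delta)^{s/2}$ combined with complex interpolation---rather than through the bilinear paraproduct machinery you outline. Your approach is arguably more transparent about where the $s>0$ hypothesis enters (summability of the high--high diagonal) and generalizes more readily to the fractional Leibniz variants such as Lemma~\ref{lem:Leibnizpoly}, whereas the original argument is more self-contained and avoids the $L^\infty$-endpoint subtlety you correctly flag for the Coifman--Meyer theorem.
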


There are many other generalisations of Kato-Ponce commutator estimates  (cf. \cite{BO14, GO14, MS13} and the references therein). The following two kinds of fractional Leibniz rules  will be used to show well-posedness and nonlinear smoothing for \eqref{hKdV}.

\begin{lemma}[see Theorem 1 in \cite{KPV1993}]\label{lem:Leibniz}
	Let $s\in(0,1)$ and $p\in (1,\infty)$. Then
	\begin{equation}
		\|D^s(f g)-f D^s g-g D^s f\|_{L^p(\mathbb{R})}\lesssim \|g\|_{L^\infty(\mathbb{R})}\|D^sf\|_{L^p(\mathbb{R})}. \label{lem:Leibniz-1}
	\end{equation}
Further more, we have
	\begin{equation}
		\|D^s(f g)\|_{L^p(\mathbb{R})}\lesssim \|f D^s g\|_{L^p(\mathbb{R})} + \|g\|_{L^\infty(\mathbb{R})}\|D^s f\|_{L^p(\mathbb{R})}. \label{lem:Leibniz-12}
	\end{equation}
\end{lemma}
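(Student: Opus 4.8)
For the final statement, Lemma \ref{lem:Leibniz}, the plan is to prove the commutator estimate \eqref{lem:Leibniz-1} first and then deduce the product estimate \eqref{lem:Leibniz-12} from it by the triangle inequality. Indeed, once \eqref{lem:Leibniz-1} is available, one writes
$$\|D^s(fg)\|_{L^p}\le\|D^s(fg)-fD^sg-gD^sf\|_{L^p}+\|fD^sg\|_{L^p}+\|gD^sf\|_{L^p},$$
bounds the first term by \eqref{lem:Leibniz-1}, keeps $\|fD^sg\|_{L^p}$ as the first term on the right-hand side of \eqref{lem:Leibniz-12}, and estimates $\|gD^sf\|_{L^p}\le\|g\|_{L^\infty}\|D^sf\|_{L^p}$ by H\"older's inequality; this yields \eqref{lem:Leibniz-12}. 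Thus all the substantive work lies in \eqref{lem:Leibniz-1}.

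For \eqref{lem:Leibniz-1} I would view the left-hand side as a bilinear Fourier multiplier operator $T_m(f,g)$ with symbol $m(\xi_1,\xi_2)=|\xi_1+\xi_2|^s-|\xi_1|^s-|\xi_2|^s$, which is homogeneous of degree $s$ but fails to be smooth in the Coifman--Meyer sense along the three lines $\xi_1=0$, $\xi_2=0$ and $\xi_1+\xi_2=0$. To exploit this structure I would introduce a Littlewood--Paley/paraproduct decomposition of $(f,g)$ and split the frequency plane into the high--low region $|\xi_2|\ll|\xi_1|$, the high--high region $|\xi_1|\sim|\xi_2|$, and the low--high region $|\xi_1|\ll|\xi_2|$. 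The guiding principle is to arrange every contribution so that the full factor $|\xi_1|^s$ lands on $f$, producing the square function $\|(\sum_k 2^{2ks}|P_kf|^2)^{1/2}\|_{L^p}\sim\|D^sf\|_{L^p}$, while $g$ enters only through frequency-localized multipliers whose kernels have $L^1$-norm controlled by the relevant power of the frequency times $\|g\|_{L^\infty}$.

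In the high--low region $|\xi_2|\ll|\xi_1|$ and the high--high region $|\xi_1|\sim|\xi_2|$ this is straightforward, since there $|\xi_1+\xi_2|\sim|\xi_1|$: the singular term $-|\xi_1|^s$ reproduces the paraproduct or diagonal pieces of $-gD^sf$, each bounded by $\|g\|_{L^\infty}\|D^sf\|_{L^p}$ through the standard square-function estimate with $|P_{<k}g|\le\|g\|_{L^\infty}$; the remaining smooth pieces factor as $|\xi_1|^s$ times a homogeneous-degree-zero Coifman--Meyer symbol; and the terms carrying $D^sg$ are harmless because $g$ sits at low frequency, so $\|P_{<k}D^sg\|_{L^\infty}\lesssim 2^{ks}\|g\|_{L^\infty}$ and the extra factor $2^{ks}$ is absorbed into the $D^sf$-square function.

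The main obstacle is the low--high region $|\xi_1|\ll|\xi_2|$, where $g$ carries the high frequency and a naive estimate would cost $\|D^sg\|_{L^p}$ rather than $\|g\|_{L^\infty}$. This is exactly where the commutator structure is essential. The subtraction of $fD^sg$ (symbol $|\xi_2|^s$) cancels the leading behaviour $|\xi_1+\xi_2|^s\sim|\xi_2|^s$ of $D^s(fg)$, leaving the smoother symbol $|\xi_1+\xi_2|^s-|\xi_2|^s=O(|\xi_1|\,|\xi_2|^{s-1})$, while the third term contributes $-|\xi_1|^s=-\Pi_{D^sf}(g)$, whose $L^p$-norm is controlled by writing $\Pi_{D^sf}(g)=gD^sf-\Pi_g(D^sf)-\Delta(g,D^sf)$ and bounding each piece by $\|g\|_{L^\infty}\|D^sf\|_{L^p}$ via H\"older on the full product and the square-function estimates. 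The leftover piece behaves like $\partial_xf\cdot D^{s-1}g$ with $g$ at high frequency; since $s-1<0$ the operator $D^{s-1}$ is smoothing, $\|P_kD^{s-1}g\|_{L^\infty}\lesssim 2^{k(s-1)}\|g\|_{L^\infty}$, and combining the gain $2^{k(s-1)}$ with the derivative $2^k$ falling on the low-frequency $f$ reconstitutes the correct power $2^{ks}$ up to an $\ell^1$-summable-in-scale convolution kernel, so a Fefferman--Stein vector-valued maximal inequality closes the bound by $\|g\|_{L^\infty}\|D^sf\|_{L^p}$. I expect this low--high/fractional-singularity analysis, rather than the elementary regions, to be the crux, and the hypotheses $0<s<1$ and $1<p<\infty$ to enter precisely here: the former makes $D^{s-1}$ smoothing and the fractional kernels integrable, while the latter supplies the Littlewood--Paley and maximal-function machinery.
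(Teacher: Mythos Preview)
The paper does not supply its own proof of this lemma; it is stated with the attribution ``see Theorem~1 in \cite{KPV1993}'' and then used as a black box. So there is nothing in the paper to compare your argument against.

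Your paraproduct/Coifman--Meyer route is a legitimate and standard proof of \eqref{lem:Leibniz-1}, and the deduction of \eqref{lem:Leibniz-12} from it by the triangle inequality is exactly right. It is, however, a genuinely different argument from the one in \cite{KPV1993}: Kenig, Ponce and Vega work pointwise via the singular-integral representation
\[
D^s h(x)=c_s\int_{\mathbb{R}}\frac{h(x)-h(x-y)}{|y|^{1+s}}\,dy,\qquad 0<s<1,
\]
which yields the algebraic identity
\[
D^s(fg)-fD^sg-gD^sf=-c_s\int_{\mathbb{R}}\frac{(f(\cdot)-f(\cdot-y))(g(\cdot)-g(\cdot-y))}{|y|^{1+s}}\,dy,
\]
and then control the right-hand side through a pointwise square-function characterisation of $D^s$. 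Their approach is shorter and elementary; yours is heavier but more flexible (it extends naturally to symbols without such a clean integral formula).

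One slip in your low--high discussion: you write that ``the derivative $2^k$ fall[s] on the low-frequency $f$,'' but in that region $f$ sits at frequency $2^j$ with $j<k$, so $\partial_x P_{<k}f$ contributes $2^j$, not $2^k$. The bound still closes---the double sum $\sum_{j<k}2^{j}\,2^{k(s-1)}\,\cdots$ collapses by two geometric-series estimates, one using $s<1$ and the other $s>0$, to the $D^sf$ square function---but as written that sentence miscounts the scales.
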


\begin{lemma}[see Theorem 1.2 in \cite{Li19}]\label{lem:Leibnizpoly}
	Let $s>0$ and $1<p, p_1,  p_2<\infty$ with $1/p=1/p_1+1/p_2$. Then for any $s_1, s_2 \geq 0$ with $s_1+ s_2=s$, and any
$f, g \in \mathcal{S}(\mathbb{R}^n)$, the following inequality holds:
	\begin{equation}
		\Big\|D^s(f g)-\sum_{|\alpha|\leq s_1}\frac{1}{\alpha!}\partial_x^{\alpha}f D^{s,\alpha} g-\sum_{|\beta|\leq s_2}\frac{1}{\beta!}\partial_x^{\beta}g D^{s,\beta} f\Big\|_{L^p}\lesssim \|D^{s_1}f\|_{L^{p_1}} \|D^{s_2}g\|_{L^{p_2}}  \label{lem:Leibnizpoly1}
	\end{equation}
where the operator $D^{s,\alpha}$ is defined via Fourier transform  as $$\widehat{D^{s,\alpha}g}(\xi)=i^{-|\alpha|}\partial_{\xi}^{\alpha}|\xi|^s .$$
\end{lemma}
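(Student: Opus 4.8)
The plan is to recast the quantity inside the norm in \eqref{lem:Leibnizpoly1} as a single bilinear Fourier multiplier and to establish its boundedness via the Coifman--Meyer multiplier theorem after a paraproduct decomposition. Denote by $\zeta$ the frequency of $f$ and by $\eta$ that of $g$. Using $\widehat{\partial_x^{\alpha}f}(\zeta)=(i\zeta)^{\alpha}\widehat f(\zeta)$ together with $\widehat{D^{s,\alpha}g}(\eta)=i^{-|\alpha|}(\partial^{\alpha}|\eta|^{s})\widehat g(\eta)$, and the analogous identities for the $\beta$-sum, a direct computation identifies the operator inside the norm with the bilinear multiplier
$$T_{\sigma}(f,g)(x)=\int_{\mathbb{R}^{n}}\int_{\mathbb{R}^{n}}e^{ix\cdot(\zeta+\eta)}\,\sigma(\zeta,\eta)\,\widehat f(\zeta)\,\widehat g(\eta)\,d\zeta\,d\eta,\quad \sigma(\zeta,\eta)=|\zeta+\eta|^{s}-\sum_{|\alpha|\le s_{1}}\frac{\zeta^{\alpha}}{\alpha!}\,\partial^{\alpha}|\eta|^{s}-\sum_{|\beta|\le s_{2}}\frac{\eta^{\beta}}{\beta!}\,\partial^{\beta}|\zeta|^{s}.$$
The two subtracted sums are precisely the Taylor polynomials of $\xi\mapsto|\xi|^{s}$ about $\eta$ in the variable $\zeta$, and about $\zeta$ in the variable $\eta$, truncated at orders $\lfloor s_{1}\rfloor$ and $\lfloor s_{2}\rfloor$. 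Writing $F=D^{s_{1}}f$ and $G=D^{s_{2}}g$, so that $\widehat f(\zeta)=|\zeta|^{-s_{1}}\widehat F(\zeta)$ and $\widehat g(\eta)=|\eta|^{-s_{2}}\widehat G(\eta)$, the claim reduces to showing that the normalized operator with symbol $m(\zeta,\eta)=\sigma(\zeta,\eta)\,|\zeta|^{-s_{1}}|\eta|^{-s_{2}}$ maps $L^{p_{1}}\times L^{p_{2}}$ into $L^{p}$.

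The core is a Littlewood--Paley/paraproduct decomposition: writing $F=\sum_{j}F_{j}$, $G=\sum_{k}G_{k}$ with frequencies $|\zeta|\approx2^{j}$, $|\eta|\approx2^{k}$, I would split the sum over $(j,k)$ into the low-high ($j\le k-3$), high-low ($k\le j-3$) and high-high ($|j-k|\le2$) ranges. In the low-high range $|\zeta+\eta|\approx|\eta|\approx2^{k}$, and the truncated $\alpha$-polynomial is designed to cancel the singular part of $|\zeta+\eta|^{s}$: Taylor's formula with integral remainder gives, on the annulus $|\zeta|\approx2^{j}$, $|\eta|\approx2^{k}$,
$$\Big||\zeta+\eta|^{s}-\sum_{|\alpha|\le s_{1}}\frac{\zeta^{\alpha}}{\alpha!}\partial^{\alpha}|\eta|^{s}\Big|\lesssim2^{jN}2^{k(s-N)},\qquad N=\lfloor s_{1}\rfloor+1>s_{1},$$
so that, after dividing by $|\zeta|^{s_{1}}|\eta|^{s_{2}}\approx2^{js_{1}}2^{ks_{2}}$ and using $s_{1}+s_{2}=s$, the block symbol has size $\lesssim2^{(j-k)(N-s_{1})}$ with strictly positive exponent, is smooth on the annulus, and obeys the expected symbol-derivative estimates. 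A single-scale Coifman--Meyer estimate uniform in $k$ bounds each block, the geometric factor $2^{(j-k)(N-s_{1})}$ sums the series $\sum_{j\le k-3}$, and the outer sum in $k$ is closed by the standard square-function/almost-orthogonality estimates; the high-low range is symmetric, controlled by the $\beta$-polynomial. In the high-high range $\zeta+\eta$ is no longer comparable to $2^{k}$: there $|\zeta+\eta|^{s}$ is lower order while both Taylor sums have size $\approx2^{ks}$, whence $|m|\lesssim1$, and one finishes with an extra Littlewood--Paley decomposition in the output frequency $|\zeta+\eta|\approx2^{i}$, $i\le k$, estimating and summing each $(i,j,k)$ block by a rescaled Coifman--Meyer bound. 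Combining the three ranges yields $\|T_{\sigma}(f,g)\|_{L^{p}}\lesssim\|D^{s_{1}}f\|_{L^{p_{1}}}\|D^{s_{2}}g\|_{L^{p_{2}}}$, which is \eqref{lem:Leibnizpoly1}.

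The main obstacle throughout is that $|\cdot|^{s}$ and its derivatives are only H\"{o}lder continuous at the origin, so the symbol $m$ is genuinely singular along the hyperplanes $\zeta=0$, $\eta=0$ and $\zeta+\eta=0$; the Littlewood--Paley localization is precisely what keeps the relevant frequency variable away from its singularity on each dyadic block, converting the apparent lack of smoothness into honest symbol-derivative estimates carrying a summable gain. A second delicate point is that each truncated Taylor sum is defined globally and therefore also contributes in the range for which it is not the designed cancellation---for instance the $\alpha$-sum in the high-low range---where each such term is $O(1)$ in size but of limited smoothness and, since $D^{s,\alpha}g$ carries order $s-|\alpha|>s_{2}$ while $\partial_{x}^{\alpha}f$ carries order $|\alpha|<s_{1}$, cannot be handled by a crude H\"{o}lder inequality but must be treated as a genuine frequency-localized paraproduct. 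Verifying the dyadic symbol bounds and their summation uniformly across all scales is the technical heart of the argument; the case $\alpha=\beta=0$ reduces to the classical fractional Leibniz rule recorded in Lemma \ref{lem:Leibniz}.
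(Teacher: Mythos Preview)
The paper does not provide a proof of this lemma: it is stated in the Preliminaries section as a citation of Theorem~1.2 in \cite{Li19} and is used as a black box in the proof of Lemma~\ref{lwplem}. There is therefore no ``paper's own proof'' to compare against.

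Your outline is a reasonable sketch of the paraproduct\,/\,Coifman--Meyer strategy that underlies the result in \cite{Li19}: rewrite the left-hand side as a bilinear multiplier, normalize by $|\zeta|^{-s_{1}}|\eta|^{-s_{2}}$, and split into low--high, high--low and diagonal interactions, using the subtracted Taylor polynomials to gain the summable factor in the off-diagonal pieces. You are also honest about the two genuine difficulties --- the limited regularity of $|\cdot|^{s}$ at the origin, and the fact that the ``wrong'' Taylor sum (e.g.\ the $\beta$-sum in the low--high regime) does not enjoy the designed cancellation and must be handled separately as a paraproduct. One point that would need care in a complete write-up is the endpoint $|\beta|=s_{2}$ (or $|\alpha|=s_{1}$) when $s_{2}$ (resp.\ $s_{1}$) is an integer: the crude size bound on the extraneous Taylor term then gives no decay in $k-j$, and one must exploit additional structure (for instance, that these terms are themselves Coifman--Meyer multipliers after normalization). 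This is handled in \cite{Li19}, and your sketch points in the right direction, but it is where a full proof would spend most of its effort.
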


We also need the  weighted Kato-Ponce inequality.

\begin{lemma}[see Theorem 1.1 in \cite{2016_Cruz-Uribe_KatoPonce}] \label{Kato-Ponce-weight}
	Let $1<p, q<\infty$, $\frac{1}{2}<r<\infty$ such that $\frac{1}{r}=\frac{1}{p}+\frac{1}{q}$. If $v \in A_{p}$, $w \in A_{q}$, and $s>\max \left\{0, n\left(\frac{1}{r}-1\right)\right\}$ or $s$ is non-negative even, then for any $f, g \in \mathcal{S}\left(\mathbb{R}^{n}\right)$, we have
	\begin{align}
		\left\|D^{s}(f g)-f D^{s}g\right\|_{L^{r}\left(v^{\frac{r}{p}} w^{\frac{r}{q}}\right)}
		\lesssim\left\|D^{s} f\right\|_{L^{p}(v)}\|g\|_{L^{q}(w)}+\|\nabla f\|_{L^{p}(v)}\left\|D^{s-1} g\right\|_{L^{q}(w)},	\label{Kato-Ponce-weight1}
		\\
		\left\|J^{s}(f g)-f J^{s}g\right\|_{L^{r}\left(v^{\frac{r}{p}} w^{\frac{r}{q}}\right)}
		\lesssim\left\|J^{s} f\right\|_{L^{p}(v)}\|g\|_{L^{q}(w)}+\|\nabla f\|_{L^{p}(v)}\left\|J^{s-1} g\right\|_{L^{q}(w)},\label{Kato-Ponce-weight2}
	\end{align}
	where the constants depend on $p, q, s,[v]_{A_{p}}$  and $[w]_{A_{q}}$.
\end{lemma}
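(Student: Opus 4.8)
The plan is to establish both commutator bounds through a Littlewood--Paley paraproduct decomposition combined with weighted estimates for bilinear Coifman--Meyer multipliers, specialized to the product weight $v^{r/p}w^{r/q}$. I would first reduce the inhomogeneous estimate \eqref{Kato-Ponce-weight2} to the homogeneous one \eqref{Kato-Ponce-weight1}: writing $J^s$ and $D^s$ as Fourier multipliers with symbols $\langle\xi\rangle^s$ and $|\xi|^s$, the two commutator symbols $\langle\xi+\eta\rangle^s-\langle\eta\rangle^s$ and $|\xi+\eta|^s-|\eta|^s$ differ only by a smooth, rapidly decaying low-frequency piece, so the $J^s$ case follows from the $D^s$ analysis at high frequencies plus a harmless bounded contribution near the origin. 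When $s$ is a non-negative even integer both symbols are polynomials, $D^s$ and $J^s$ act as genuine differential operators, and the exact Leibniz rule turns the estimate into a finite-sum version of the weighted H\"older inequality; this special case I would dispatch separately. So fix the homogeneous case with $s>\max\{0,n(1/r-1)\}$.

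Next, I would decompose $fg=\Pi_1+\Pi_2+\Pi_3$ into high--low, low--high and high--high paraproducts via dyadic projections $P_k$, with $\Pi_1=\sum_k (P_kf)(P_{<k-2}g)$ carrying the high frequency on $f$, $\Pi_2=\sum_k (P_{<k-2}f)(P_kg)$ carrying it on $g$, and $\Pi_3$ collecting the comparable-frequency interactions. For $\Pi_1$ the output frequency is localized at $\sim 2^k$ by $f$, so $D^s\Pi_1$ behaves like $(D^sf)g$ and has no counterpart in $fD^sg$; bounding it by $\|D^sf\|_{L^p(v)}\|g\|_{L^q(w)}$ reduces, via weighted H\"older with the factorization $v^{r/p}w^{r/q}$ and $\tfrac1r=\tfrac1p+\tfrac1q$, to a weighted Fefferman--Stein square-function inequality that holds because $v\in A_p$ and $w\in A_q$. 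This produces the first term on the right-hand side.

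The genuine commutator cancellation lives in $\Pi_2$. On each block the relevant symbol is $|\xi+\eta|^s-|\eta|^s$ with $|\xi|\ll|\eta|$, and a first-order Taylor expansion gives $|\xi+\eta|^s-|\eta|^s=\xi\cdot\nabla(|\cdot|^s)(\eta)+O(|\xi|^2|\eta|^{s-2})$; the leading term transfers one full derivative onto $f$ while leaving $D^{s-1}$ on $g$, so this piece is a bilinear Coifman--Meyer multiplier applied to $(\nabla f, D^{s-1}g)$. Invoking the weighted boundedness of such multipliers from $L^p(v)\times L^q(w)$ into $L^r(v^{r/p}w^{r/q})$ for separate $A_p\times A_q$ weights yields $\|\nabla f\|_{L^p(v)}\|D^{s-1}g\|_{L^q(w)}$, the second right-hand term. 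The remaining high--high part $\Pi_3$ is the diagonal term: neither factor dominates, so I would distribute $D^s$ across both inputs and sum the dyadic contributions in $L^r$, absorbing the result into the two right-hand terms after weighted H\"older. The summation here is exactly where the regularity threshold $s>\max\{0,n(1/r-1)\}$ is needed. In the unweighted case $v=w=1$ this scheme recovers the classical estimate of Lemma \ref{Kato-Ponce ineq}.

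The main obstacle is the weighted bilinear multiplier estimate underlying the $\Pi_2$ and $\Pi_3$ analysis: one must control Coifman--Meyer type operators on products of independently weighted spaces with $v\in A_p$, $w\in A_q$ throughout the full range $1/2<r<\infty$, including $r<1$ where $L^r(v^{r/p}w^{r/q})$ is only quasi-Banach and duality is unavailable. I would handle this by passing through the sharp maximal function together with the weighted good-$\lambda$ and vector-valued Fefferman--Stein machinery, using the reverse-H\"older property of $A_p$ weights to sum the dyadic square functions; the quasi-Banach range $r<1$ is precisely where the hypothesis $s>n(1/r-1)$ becomes indispensable, since it guarantees geometric decay of the high--high dyadic pieces and hence convergence of the $\Pi_3$ sum in $L^r$.
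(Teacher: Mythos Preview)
The paper does not give its own proof of this lemma: it is stated in Section~2 (Preliminaries) as a direct quotation of Theorem~1.1 in Cruz-Uribe and Naibo \cite{2016_Cruz-Uribe_KatoPonce}, with no argument supplied. So there is nothing in the paper to compare your proposal against; the authors simply invoke the result as a black box when they estimate the term $T_2$ in the proof of Theorem~\ref{dispersiveblow-upsolution}.

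That said, your sketch is in the right spirit and broadly tracks the strategy of the original reference. A paraproduct decomposition together with weighted bilinear Coifman--Meyer/Calder\'on--Zygmund theory is indeed the core mechanism, and you correctly identify that the quasi-Banach range $r<1$ is where the restriction $s>n(1/r-1)$ enters. The one place where your outline is thin is exactly the step you flag as the ``main obstacle'': the weighted $L^p(v)\times L^q(w)\to L^r(v^{r/p}w^{r/q})$ bound for bilinear multipliers with independent $A_p$, $A_q$ weights. In \cite{2016_Cruz-Uribe_KatoPonce} this is handled via weighted extrapolation in the bilinear setting rather than by a direct good-$\lambda$ argument, and that extrapolation machinery is what makes the full range $1/2<r<\infty$ accessible without a separate duality argument. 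Your proposed route through sharp maximal functions and Fefferman--Stein is plausible but would require substantially more care to make rigorous in the quasi-Banach regime; if you intend to actually supply a proof here rather than cite the result, it would be cleaner to appeal to bilinear extrapolation as in the source.
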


\section{Persistence properties}

In this section we treat one of the main topics: persistence properties for solutions to higher-order generalized KdV equations. Local well-posedness  for \eqref{hKdV} in weighted Sobolev space $Z_{s,(r_1,r_2)}$ is established. For persistence properties in other dispersive models, we refer to \cite{FP11p, FLP12p, FLP13p, NP09p, 2016_Bustamante, 2016_Fonseca, BJM24} and reference therein.

Noting that $$e^{it(-1)^{j+1}\xi^{2j+1}}\partial_{\xi}\widehat{u_0}=\partial_\xi(e^{it(-1)^{j+1}\xi^{2j+1}}\widehat{u_0})+t(-1)^{j+1}(2j+1)\xi^{2j}e^{it(-1)^{j+1}\xi^{2j+1}}\widehat{u_0},$$
we derive 
\begin{equation*}
	\begin{aligned}
		\quad W(t)\left(x u_{0}\right)
		&=i\mathscr{F}^{-1} \partial_{\xi}\left(e^{it(-1)^{j+1}\xi^{2j+1}} \widehat{u}_{0}\right)+ t(-1)^{j}(2j+1)\mathscr{F}^{-1}\xi^{2j} e^{it(-1)^{j+1}\xi^{2j+1}} \widehat{u_{0}} \\
		&= xW(t)u_0+(2j+1)t W(t)\partial_{x}^{2j} u_{0}.
	\end{aligned}
\end{equation*}

The above identity suggests that the regularity of solutions to higher-order generalized KdV equations is $2j$ times larger than the decay rate.

Next, let us recall the definition of Stein derivation (see \cite{Stein1961} or \cite{Stein_HarmonicAnalysis}). For $\alpha\in (0,2)$ and $x\in\mathbb{R}^n$, define
	\begin{align}
\mathcal{D}_{\alpha}f(x) = \lim_{\varepsilon\to 0}\frac{1}{c_{\alpha}}\int_{|y|\geq\varepsilon}\frac{f(x+y)-f(x)}{|y|^{n+\alpha}} dy, \label{Stein-deri}
	\end{align}
where $c_{\alpha}=\frac{\pi^{n/2}\Gamma(-\alpha/2)}{2^{\alpha}\Gamma((n+2)/2)}$. It was remarked in \cite{Stein1961} that  \eqref{Stein-deri} is consistent with	\eqref{DeriFouri}, i.e. 
$$\mathcal{D}_{\alpha}f(x)=D^{\alpha}f(x)=\mathscr{F}^{-1}|\xi|^{\alpha}\widehat{f}(\xi)$$
for $f \in \mathscr{S}(\mathbb{R}^n)$. 

Denote
$$W^{\alpha,p}=(1-\Delta)^{-\alpha/2}L^p(\mathbb{R}^n).$$
Stein   gave the following equivalent characterization of the Sobolev space $W^{\alpha,p}$ in \cite{Stein1961}. 

$f\in W^{\alpha,p}(\mathbb{R}^n)$ if and only if $f\in L^p(\mathbb{R}^n)$ and $\mathcal{D}_{\alpha}f\in L^p(\mathbb{R}^n)$, where $\alpha\in(0,2)$ and $p\in(1, \infty)$.
Moreover,
$$\|f\|_{W^{\alpha,p}}:=\|(1-\Delta)^{\alpha/2}f\|_{L^p}\simeq \|f\|_{L^p}+\|D^{\alpha}f\|_{L^p}\simeq \|f\|_{L^p}+\|\mathcal{D}_{\alpha}f\|_{L^p}.$$

Denote
	\begin{align}
\Lambda_{\alpha}(F(\cdot, y))(x) = \lim_{\varepsilon\to 0}\frac{1}{c_{\alpha}}\int_{|y|\geq\varepsilon}\frac{F(x, y)}{|y|^{n+\alpha}} dy, \nonumber
	\end{align}
then we have the following Leibnitz's rule for $\mathcal{D}_{\alpha}$
	\begin{align}
\mathcal{D}_{\alpha}(fg)(x) = g(x)\mathcal{D}_{\alpha}f(x)+\Lambda_{\alpha}\big((g(\cdot+ y)-g(\cdot))f(\cdot+ y)\big)(x). \label{Stein-deriLeiRule}
	\end{align}
Taking $g(x)=e^{i\phi(x)}$ in  \eqref{Stein-deriLeiRule} implies 
	\begin{align}
\mathcal{D}_{\alpha}(e^{i\phi(x)}f)(x) = e^{i\phi(x)}\mathcal{D}_{\alpha}f(x)+e^{i\phi(x)}\Lambda_{\alpha}\big((e^{i(\phi(\cdot+y)-\phi(\cdot))}-1)f(\cdot+ y)\big)(x). \label{Stein-deriLeiRule2}
	\end{align}

\begin{lemma}\label{lem:weightZK}
	Let $r\in (0, 1) $ and  $s\geqslant 2jr$. Assume that  $u_0\in Z_{s,r} (\mathbb{R})$, then for all $ t\in \mathbb{R}$ and almost every $x\in\mathbb{R}$, it holds that	
	\begin{equation}
		|x|^{r}W(t)u_0=W(t)(|x|^{r}u_0)+W(t)\left(\left\{\Phi_{\xi,t,r}(\widehat{u_0})\right\}^{\vee}\right),\label{eq:weight_x}
	\end{equation}
	with
\begin{align}
		\|\Phi_{\xi,t,r}(\widehat{u_0})\|_2&\lesssim (1+|t|)\|u_0\|_{H^s(\mathbb{R})}.\label{eq:error_x}
	\end{align}
	Moreover, if $0<\beta< r$,  $D^{\beta}(|x|^{r}u_0)\in L^2(\mathbb{R})$ and  $u_0\in H^{s+\beta}(\mathbb{R})$, then one has
	\begin{equation}
		D^{\beta}\left(|x|^{r}W(t)u_0\right)=W(t)(D^{\beta}|x|^{r}u_0)+W(t)\left(D^{\beta}\left\{\Phi_{\xi,t,r}(\widehat{u_0})\right\}^{\vee}\right), \label{eq:weight_derive}
	\end{equation}
	with
\begin{align}
		\|D^{\beta}\left\{\Phi_{\xi,t,r}(\widehat{u_0})\right\}^{\vee}\|_2&\lesssim (1+|t|)\|u_0\|_{H^{s+\beta}(\mathbb{R})}. \label{eq:error_derive} 
	\end{align}
\end{lemma}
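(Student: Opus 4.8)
The plan is to transplant the weight $|x|^{r}$ to the frequency side and read off the decomposition \eqref{eq:weight_x} directly from the Leibniz rule \eqref{Stein-deriLeiRule2} for the Stein derivative, then reduce \eqref{eq:error_x}, \eqref{eq:error_derive} to a single $L^{2}$-bound on the explicit singular integral defining $\Phi_{\xi,t,r}$, and finally pass from Schwartz data to general $u_{0}\in Z_{s,r}$ by density. Concretely, for $u_{0}\in\mathscr S(\mathbb R)$ and $r\in(0,1)$, Stein's characterization gives $\widehat{|x|^{r}g}=c\,\mathcal D_{r}(\widehat g)$, where $\mathcal D_{r}$ is the singular integral \eqref{Stein-deri} acting in the $\xi$-variable. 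Taking $g=W(t)u_{0}$, so $\widehat g=e^{i\phi}\widehat{u_{0}}$ with $\phi(\xi)=t(-1)^{j+1}\xi^{2j+1}$, and applying \eqref{Stein-deriLeiRule2} with this $\phi$, the right-hand side splits as $c\,e^{i\phi(\xi)}\mathcal D_{r}\widehat{u_{0}}(\xi)+e^{i\phi(\xi)}\Phi_{\xi,t,r}(\widehat{u_{0}})(\xi)$ with $\Phi_{\xi,t,r}(\widehat{u_{0}}):=c\,\Lambda_{r}\bigl((e^{i(\phi(\cdot+y)-\phi(\cdot))}-1)\widehat{u_{0}}(\cdot+y)\bigr)$; since the first term equals $\widehat{W(t)(|x|^{r}u_{0})}(\xi)$ and the second equals $\widehat{W(t)(\{\Phi_{\xi,t,r}(\widehat{u_{0}})\}^{\vee})}(\xi)$, inverting $\mathscr F$ gives \eqref{eq:weight_x}. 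Because $|e^{i\theta}-1|\lesssim\min\{1,|\theta|\}$ and $0<r<1$, this integral converges absolutely both at $y=0$ and at $y=\infty$, so $\Phi_{\xi,t,r}(\widehat{u_{0}})(\xi)=\tfrac{c}{c_{r}}\int_{\mathbb R}|y|^{-1-r}\bigl(e^{i(\phi(\xi+y)-\phi(\xi))}-1\bigr)\widehat{u_{0}}(\xi+y)\,dy$.

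For \eqref{eq:error_x} I would split the $y$-integral at $|y|=1$. On $|y|>1$, use $|e^{i\theta}-1|\le2$ and Young's inequality with $|y|^{-1-r}\mathbf 1_{\{|y|>1\}}\in L^{1}$ (here $r>0$) to bound this part by $\lesssim\|u_{0}\|_{L^{2}}$, with no $t$-dependence. On $|y|\le1$, write $\theta=t(-1)^{j+1}\bigl((\xi+y)^{2j+1}-\xi^{2j+1}\bigr)$ and use the elementary estimate $|(\xi+y)^{2j+1}-\xi^{2j+1}|\lesssim|y|(|\xi|+|y|)^{2j}\lesssim|y|\langle\xi\rangle^{2j}$; combined with $|e^{i\theta}-1|\lesssim\min\{1,|\theta|\}$, this already yields $\|\Phi_{\xi,t,r}(\widehat{u_{0}})\|_{2}\lesssim(1+|t|)\|u_{0}\|_{H^{2jr+}}$, i.e. with a logarithmic loss at the sharp exponent $s=2jr$. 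To remove that loss I would interpolate in the weight exponent: the endpoint $r=0$ is trivial ($\Phi\equiv0$), and the endpoint $r=1$ is precisely the commutator identity $xW(t)u_{0}=W(t)(xu_{0})+(2j+1)tW(t)\partial_{x}^{2j}u_{0}$ displayed before the statement, whence $\Phi_{\xi,t,1}(\widehat{u_{0}})(\xi)=(2j+1)t(i\xi)^{2j}\widehat{u_{0}}(\xi)$ and $\|\Phi_{\xi,t,1}(\widehat{u_{0}})\|_{2}\le(2j+1)|t|\,\|u_{0}\|_{\dot H^{2j}}$. Applying Stein's analytic interpolation theorem to the family $z\mapsto W(-t)\bigl(\langle x\rangle^{z}W(t)u_{0}\bigr)-\langle x\rangle^{z}u_{0}$ on the strip $0\le\Re z\le1$ — uniformly bounded $L^{2}\to L^{2}$ on $\Re z=0$, since $|\langle x\rangle^{iy}|\equiv1$, and bounded $H^{2j}\to L^{2}$ on $\Re z=1$ by the commutator identity — together with $[L^{2},H^{2j}]_{r}=H^{2jr}$ and the fact that $|x|^{r}-\langle x\rangle^{r}\in L^{\infty}$, gives \eqref{eq:error_x}. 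I expect this interpolation step — erasing the endpoint logarithm — to be the main obstacle.

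For \eqref{eq:weight_derive}–\eqref{eq:error_derive} I would apply $D^{\beta}$ to \eqref{eq:weight_x}; since $D^{\beta}$ is a Fourier multiplier it commutes with $W(t)$, so the main term becomes $W(t)(D^{\beta}(|x|^{r}u_{0}))$ (in $L^{2}$ by hypothesis) and the error becomes $W(t)(\mathscr F^{-1}(|\xi|^{\beta}\Phi_{\xi,t,r}(\widehat{u_{0}})))$. To estimate $\||\xi|^{\beta}\Phi_{\xi,t,r}(\widehat{u_{0}})\|_{2}$ one uses $|\xi|^{\beta}\lesssim|\xi+y|^{\beta}+|y|^{\beta}$ inside the integral: the $|\xi+y|^{\beta}$ contribution is $\Phi_{\xi,t,r}$ evaluated at $D^{\beta}u_{0}$, hence $\lesssim(1+|t|)\|u_{0}\|_{H^{2jr+\beta}}$ by \eqref{eq:error_x}, while the $|y|^{\beta}$ contribution turns the kernel $|y|^{-1-r}$ into $|y|^{-1-(r-\beta)}$, i.e. it equals $\Phi_{\xi,t,r-\beta}$ applied to $u_{0}$ — and this is exactly where the hypothesis $\beta<r$ is used, to keep the order $r-\beta>0$ — so by \eqref{eq:error_x} again it is $\lesssim(1+|t|)\|u_{0}\|_{H^{2j(r-\beta)}}\le(1+|t|)\|u_{0}\|_{H^{2jr+\beta}}$. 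Finally, since \eqref{eq:error_x} (resp. \eqref{eq:error_derive}) exhibits $u_{0}\mapsto\{\Phi_{\xi,t,r}(\widehat{u_{0}})\}^{\vee}$ as a bounded operator from $H^{s}$ into $L^{2}$ (resp. from $H^{s+\beta}$ into $\dot H^{\beta}$), the identities \eqref{eq:weight_x} and \eqref{eq:weight_derive}, established on the dense set $\mathscr S(\mathbb R)$, extend by continuity to all $u_{0}$ satisfying the stated hypotheses.
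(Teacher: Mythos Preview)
Your derivation of \eqref{eq:weight_x} via the Leibniz rule \eqref{Stein-deriLeiRule2} coincides exactly with the paper's, and your reduction of \eqref{eq:error_derive} to \eqref{eq:error_x} through the splitting $|\xi|^{\beta}\lesssim|\xi+y|^{\beta}+|y|^{\beta}$ is a clean and valid alternative to the paper's route (which simply quotes \cite{FLP15} once more for the weighted bound \eqref{WPhi3}).

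The real divergence is in how you establish the core bound \eqref{eq:error_x}. The paper defers entirely to the argument of Fonseca--Linares--Ponce \cite{FLP15}, which is a \emph{direct} pointwise estimate with no loss at $s=2jr$: decomposing dyadically in $|y|\sim2^{-k}$ on $\{|y|\le1\}$, using $|e^{i\theta}-1|\lesssim\min\{1,|t|2^{-k}\langle\xi\rangle^{2j}\}$ and $\int_{|y|\sim2^{-k}}|\widehat{u_0}(\xi+y)|\,dy\lesssim2^{-k}M(\langle\cdot\rangle^{2jr}|\widehat{u_0}|)(\xi)\,\langle\xi\rangle^{-2jr}$, one sums the geometric series to get $|\Phi(\xi)|\lesssim|t|^{r}M(\langle\cdot\rangle^{2jr}|\widehat{u_0}|)(\xi)+\bigl(|y|^{-1-r}\mathbf 1_{|y|>1}\bigr)\!*\!|\widehat{u_0}|(\xi)$, and the $L^{2}$-boundedness of the maximal operator closes the estimate at the sharp exponent. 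Your ``logarithmic loss'' is therefore an artifact of applying Minkowski's inequality in $\xi$ too early (which indeed forces the non-integrable kernel $|y|^{-1}$), not an intrinsic obstruction; the Stein interpolation you propose as a remedy is unnecessary.

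That interpolation argument, as written, also has a gap at the right edge of the strip. The commutator identity displayed before the lemma is for the multiplier $x$, not for $\langle x\rangle^{1+iy}$; to bound $T_{1+iy}:H^{2j}\to L^{2}$ you need a separate estimate $\|[\langle x\rangle^{1+iy},W(t)]u_{0}\|_{2}\lesssim|t|(1+|y|)^{N}\|u_{0}\|_{H^{2j}}$, which does follow by expanding $[\langle x\rangle^{1+iy},\partial_{x}^{2j+1}]$ as a differential operator of order $2j$ with bounded coefficients (growing polynomially in $y$) and integrating in time---but this is genuinely more than the one-line appeal you give. The final passage from $\langle x\rangle^{r}$ back to $|x|^{r}$ also needs care, since $|x|^{r}$ is not smooth at the origin.
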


\vspace{1mm}
\begin{proof} According to Stein derivation and \eqref{Stein-deriLeiRule2}, we get
	\begin{align}
		& |x|^{r_1}W(t)u_0-W(t)(|x|^{r}u_0) \notag \\
		=&\mathscr{F}^{-1}\left(D_\xi^{r}(e^{it(-1)^{j+1}\xi^{2j+1}} \widehat{u_0})-e^{it(-1)^{j+1}\xi^{2j+1}} D_\xi^{r_1} \widehat{u_0}\right)\notag  \\
		=&\mathscr{F}^{-1}e^{it(-1)^{j+1}\xi^{2j+1}} \Lambda_{r}\big(  (e^{it(-1)^{j+1}((\xi+y)^{2j+1}-\xi^{2j+1})}-1) \widehat{u_0}(\cdot+y) \big)(\xi)
\notag  \\
=&W(t)\left(\left\{\Phi_{\xi,t,r}(\widehat{u_0})\right\}^{\vee}\right),\label{WPhi1}
	\end{align}
where
$$\Phi_{\xi, t, r}\left(\widehat{u_0}\right)=\Lambda_{r}\big(  (e^{it(-1)^{j+1}((\xi+y)^{2j+1}-\xi^{2j+1})}-1) \widehat{u_0}(\cdot+y) \big)(\xi).$$

By using the same argument provided in \cite{FLP15} where well-posedness for generalized KdV equations in fractional weighted Sobolev spaces was studied, one derives
	\begin{align}
\|\Phi_{\xi,t,r}(\widehat{u_0})\|_{L^p}	\lesssim (1+|t|)\big(\|\widehat{u_0}\|_{L^p}	+\big\||\xi|^{2jr}\widehat{u_0}\big\|_{L^p}\big),\label{WPhi2}
	\end{align}
and 
	\begin{align}
\big\||\xi|^{\beta}\Phi_{\xi,t,r}(\widehat{u_0})\big\|_{L^p}	\lesssim (1+|t|)\big(\|\widehat{u_0}\|_{L^p}	+\big\||\xi|^{\beta+2jr}\widehat{u_0}\big\|_{L^p}\big),\label{WPhi3}
	\end{align}
Then, \eqref{eq:weight_x} and \eqref{eq:error_x} are direct results of \eqref{WPhi1} and  \eqref{WPhi2}.

For $\beta \in\left(0, r\right)$, it is easy to verify that
	\begin{align}
		&\quad D_{x}^{\beta}\left(|x|^{r} W(t) u_{0}\right)- W(t)\left(D_{x}^{\beta}|x|^{r} u_{0}\right) \notag \\
		&=\mathscr{F}^{-1}\Big(|\xi|^{\beta}D_{\xi}^{r}e^{it(-1)^{j+1}\xi^{2j+1}}\widehat{u_0}-e^{it(-1)^{j+1}\xi^{2j+1}}|\xi|^{\beta}D_{\xi}^{r}\widehat{u_0}\Big)  \notag\\
		&=\mathscr{F}^{-1}|\xi|^{\beta}e^{it\omega(\xi,\eta)}\Phi_{\xi,t,r}  \notag\\
		&=W(t)\left(D^{\beta}\left\{\Phi_{\xi,t,r}(\widehat{u_0})\right\}^{\vee}\right). \label{DWPhi4}
	\end{align}

From \eqref{WPhi3} and \eqref{DWPhi4}, we immediately obtain \eqref{eq:weight_derive} and \eqref{eq:error_derive}. So, we finish the proof. 
\end{proof}

Let us turn to the well-posedness in weighted Sobolev spaces. We define the work space as
$$X_T=\Big\{u\in C([0,T];Z_{s,(r_1,r_2)}):\|u\|_{X_T} <\infty \Big\}$$
where
	\begin{align}
		\|u\|_{X_T}= &
		\|u\|_{L^{\infty}_TH^s_{x}}+\big\||x|^{r}u\big\|_{L^{\infty}_TL^2_{x}} +\Big\|J^{s-\frac{2j+1}{4}-}u\Big\|_{L_x^{2}L^{\infty}_{T}}+\big\|J^{s} \partial_x^j u\big\|_{L_x^{\infty}L^2_{T}}\notag \\
&+ \Big\|J^{j+1/2}D_x^{\frac{2j-1}{4}}u\Big\|_{L_T^{2}L^{\infty}_{x}}
+\Big\|J^{s}D^{\frac{2j-1}{8}}_x u\Big\|_{L_T^{8}L^{4}_{x}}+\Big\|J^{s}D^{\frac{2j-1}{6}}_{x}  u\Big\|_{L^{6}_{xT}}.\label{workspaceXT}
	\end{align}

\begin{lemma}\label{lwplem}
Let $ s\geq j+1/2$ and $0<T\leq 1$. Assume that $u\in X_T$ where  $ X_T$  is defined via the norm given in \eqref{workspaceXT}. Denote
$$ z_1(t)=\int^t_0 W(t-t')(u\partial_x^j u)(t')dt',$$
then  we have
\begin{align}
\|z_1(t)\|_{L^\infty_T H^s_{x}}\lesssim  T^{1/2}\|u\|_{X_T}^2 .\label{lwplem0}
	\end{align}
\end{lemma}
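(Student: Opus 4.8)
\textbf{Proof plan for Lemma \ref{lwplem}.}

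The plan is to estimate $\|z_1(t)\|_{L^\infty_T H^s_x}$ by placing the Duhamel integral into the local smoothing estimate and then controlling the resulting $L^1_x L^2_T$-norm of the nonlinearity by Cauchy--Schwarz in $x$ (to produce the factor $T^{1/2}$, or rather a factor coming from the finite spatial support of the estimate—here we will instead gain $T^{1/2}$ from Hölder in time after trading the smoothing gain) and by the Leibniz rule. Concretely, since $u \partial_x^j u$ carries $j$ derivatives, I would first write $J^s(u\partial_x^j u)$ and apply the Kato--Ponce inequality (Lemma \ref{Kato-Ponce ineq}) to reduce to terms of the form $J^s u \cdot \partial_x^j u$ and $\partial_x u \cdot J^{s-1}\partial_x^j u$; both contain one factor with at most $j$ derivatives landing on it and another factor in $H^s$ with $s\geq j+1/2$. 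The key point is that the worst term, $u \partial_x^j J^s u$ (schematically), has $j$ spatial derivatives which will be absorbed using the smoothing estimate \eqref{linear estimate:Kato-2}: $\big\|\partial_x^{2j}\int_0^t W(t-t')g\,dt'\big\|_{L^\infty_x L^2_T}\lesssim \|g\|_{L^1_x L^2_T}$ — but since we only need $\partial_x^j$ worth of smoothing to be applied to $z_1$ to recover $J^s$ acting on a $j$-th order nonlinearity, we want the version that says $J^s z_1$ is controlled, i.e. we apply $J^s$ to $z_1$, commute it inside Duhamel, and use \eqref{linear estimate:Kato-3} with $l=0$: $\big\|\partial_x^{j}\int_0^t W(t-t')g\,dt'\big\|_{L^\infty_x L^2_T}\lesssim T^{1/2}\|g\|_{L^1_x L^2_T}$, so that $\|J^s z_1\|_{L^\infty_x L^2_T}\lesssim T^{1/2}\|J^s(u\,u)\text{-type terms}\|_{L^1_x L^2_T}$ with only the remaining $\partial_x^{0}$...

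Let me reorganize. The cleanest route: write $J^s z_1(t) = \int_0^t W(t-t')\,J^s(u\partial_x^j u)(t')\,dt'$. Since $\partial_x^j u = \partial_x^{j}u$, factor one $\partial_x^j$ out of the product using Leibniz: $J^s(u\partial_x^j u) = \partial_x^j\big(\text{lower-order product}\big) + (\text{commutator terms})$ — more precisely one uses that $u\partial_x^j u$ is, up to constants and terms with fewer derivatives (which are handled identically), comparable to $\partial_x^j(u^2)$ modulo Leibniz corrections, each correction having strictly fewer than $j$ derivatives on each factor. Then apply $\partial_x^j$-smoothing \eqref{linear estimate:Kato-2} (or \eqref{linear estimate:Kato-3} with a time power) to get $\|J^s z_1\|_{L^\infty_x L^2_T} \lesssim \|J^s(\text{product})\|_{L^1_x L^2_T}$, and by Kato--Ponce $\|J^s(\text{product})\|_{L^1_x L^2_T}\lesssim \|J^s u\|_{L^2_x L^2_T}\|u\|_{L^2_x L^\infty_T}$-type bilinear forms, each factor belonging to a norm in the definition \eqref{workspaceXT} of $\|u\|_{X_T}$ (using $\|J^s u\|_{L^\infty_T H^s_x}$ controls $\|J^s u\|_{L^2_T H^s_x}$ on $[0,T]$ with $T\le1$, and $\|J^{s-(2j+1)/4-}u\|_{L^2_x L^\infty_T}$ supplies the sup-in-time factor once we note $s-(2j+1)/4 \ge j$ when $s\ge j+1/2$... indeed $s-(2j+1)/4 \ge j+1/2-(2j+1)/4 = (2j+1)/4 > 0$ but we need it $\ge j$; this forces us to instead distribute derivatives so that the factor measured in $L^2_xL^\infty_T$ carries at most $s-(2j+1)/4-$ derivatives, which is exactly where the Kato--Ponce splitting and the choice to smooth with the full $\partial_x^j$ pays off). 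Finally, $T^{1/2}$ is extracted either from \eqref{linear estimate:Kato-3} directly, or by Hölder in $t$ on $[0,T]$ applied to one of the bilinear factors, using $T\le 1$.

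The order of steps I would carry out: (1) commute $J^s$ inside the Duhamel term; (2) use Leibniz/Kato--Ponce to write $J^s(u\partial_x^j u)$ as a sum $\partial_x^j(\text{product of }J^s u\text{ and }u\text{, each with}\le j\text{ derivatives})$ plus strictly lower-order remainders; (3) apply the local smoothing estimate \eqref{linear estimate:Kato-2} (or \eqref{linear estimate:Kato-3} with $l=0$, $p=2$) to pass from $L^\infty_x L^2_T$ control of $\partial_x^j z_1$-type objects to $L^1_x L^2_T$ control of the nonlinearity, which additionally controls $\|J^s z_1\|_{L^\infty_T L^2_x}$ via the unitarity of $W$ combined with \eqref{linear estimate:Kato-2}; (4) bound each resulting bilinear term in $L^1_x L^2_T$ by Cauchy--Schwarz in $x$ into two $L^2_x$ factors, one placed in $\|J^s u\|_{L^2_x L^2_T}$ (or $L^\infty_x L^2_T$) and the other in the maximal-type norm $\|J^{s-(2j+1)/4-}u\|_{L^2_x L^\infty_T}$ from \eqref{workspaceXT}; (5) collect the $T^{1/2}$ factor from Hölder in time.

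\textbf{Main obstacle.} The delicate point is the derivative bookkeeping: we must show that after applying $J^s$ to the quadratic term $u\partial_x^j u$, the $j$ ``extra'' derivatives can always be arranged to hit the Duhamel operator (so they are absorbed by Kato smoothing at the cost of a harmless $T^{1/2}$), leaving the two remaining factors controllable by the norms available in $\|u\|_{X_T}$ — in particular, the factor sitting in $L^2_x L^\infty_T$ must carry no more than $s-(2j+1)/4-$ derivatives, and one must verify this is consistent with $s\ge j+1/2$ for all the Leibniz correction terms. Handling the fractional part of $J^s$ (when $s\notin\mathbb{N}$) via the Kato--Ponce inequality Lemma \ref{Kato-Ponce ineq}, rather than a plain Leibniz rule, is what makes this go through cleanly; the lower-order commutator terms are strictly easier and are dispatched the same way with derivatives to spare.
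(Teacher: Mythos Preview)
Your plan has a real gap in step (2). The Kato--Ponce inequality \eqref{Kato-Ponce ineq01} is a \emph{norm bound}, not a structural identity: it does not let you write $J^s(u\partial_x^j u)=\partial_x^j P+R$ with $P$ a genuine product on which you can then invoke the dual smoothing estimate \eqref{linear estimate:Kato-2a}. If you instead try to use the bound itself, the second Kato--Ponce term is $\|\partial_x u\|_{L^\infty}\|J^{s-1}\partial_x^j u\|_{L^2}$, which for $j\ge 2$ carries $s+j-1>s$ derivatives in a plain $L^2_x$ norm; no norm in $X_T$ controls that (the smoothing norm in $X_T$ is $\|J^s\partial_x^j u\|_{L^\infty_x L^2_T}$, local in $x$, not global). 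One could attempt an honest integer-Leibniz ``integration by parts'' decomposition $u\partial_x^j u=\tfrac12\partial_x^j(u^2)-\dots$ and iterate, but the corrections again carry up to $s+j-1$ derivatives after applying $J^s$, and the recursion does not close without further tools (and says nothing about fractional~$s$). In short, the step ``pull $\partial_x^j$ outside so smoothing eats it'' is asserted but not established.

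The paper's argument avoids this entirely and is much shorter. It never applies smoothing to the Duhamel operator in this lemma. By unitarity of $W(t)$ and Minkowski,
\[
\|z_1\|_{L^\infty_T H^s_x}\le \int_0^T \|u\partial_x^j u\|_{H^s_x}\,dt,
\]
and then $\|D_x^s(u\partial_x^j u)\|_{L^2_x}$ is expanded by Li's polynomial Leibniz rule (Lemma \ref{lem:Leibnizpoly}) with the asymmetric split $s_1=s-\tfrac14$, $s_2=\tfrac14$. The decisive point you are missing is that the workspace $X_T$ already contains the smoothing-type norm $\|J^s\partial_x^j u\|_{L^\infty_x L^2_T}$ \emph{for $u$ itself}. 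So for the high terms $\partial_x^\ell u\cdot D_x^{s,\ell}\partial_x^j u$ with $\ell$ small one pairs $\|\partial_x^\ell u\|_{L^2_xL^\infty_T}$ (maximal norm) against $\|D_x^{s,\ell}\partial_x^j u\|_{L^\infty_xL^2_T}$ (smoothing norm), while for $\ell$ near $s$ one pairs $\|\partial_x^\ell u\|_{L^\infty_TL^2_x}$ against the Strichartz norm $\|J^{j+1/2}D_x^{(2j-1)/4}u\|_{L^2_TL^\infty_x}$; the Leibniz remainder is handled with the $L^8_TL^4_x$ norm. The factor $T^{1/2}$ comes from a single H\"older in $t$, and nothing needs to be ``factored through'' the Duhamel smoothing.
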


\begin{proof}  First of all, it is easy to see that
\begin{align}
\|z_1(t)\|_{L^\infty_T L^2_{x}} \leq \int_0^T  \|u\partial_x^ju\|_{L^2_{x}} dt 
\lesssim   T^{1/2} \|u\|_{L_T^{\infty}L^2_{x}}\|\partial_x^ju\|_{L_T^{2}L^{\infty}_{x}}
\lesssim  T^{1/2}\|u\|_{X_T}^2. \nonumber
	\end{align}
	
Secondly, by using Leibniz's rule for fractional derivatives \eqref{lem:Leibnizpoly1} with $s_1=s-1/4$ and $s_2=1/4$, H\"older's inequality and Sobolev's inequality, we get
\begin{align}
&\|D^{s}_{x}z_1(t)\|_{L^\infty_T L^2_{x}}
\leq \int_0^T  \|D^{s}_{x}(u\partial_x^ju)\|_{L^2_{x}} dt \notag \\
\lesssim \ & T^{\frac{1}{2}}  \Big( \sum_{\ell\leq s-\frac{1}{4}}\big\|\partial^{\ell}_{x}uD^{s,\ell}_x\partial^{j}_{x}u\big\|_{L^2_{xT}}+\big\|\partial^{j}_{x}uD^{s}_xu\big\|_{L^2_{xT}} \Big)+ T^{\frac{7}{8}} \big\|D^{s-\frac{1}{4}}_x u\big\|_{L^{\infty}_{T}L^4_{x}} \big\|D^{\frac{1}{4}}_x \partial^{j}_{x}u \big\|_{L^8_{T}L^4_{x}}\notag \\
\lesssim \ & T^{\frac{1}{2}}  \Big( \sum_{\ell< s-\frac{2j+1}{4}}\big\|\partial^{\ell}_{x}u\big\|_{L_x^{2}L^{\infty}_{T}}\big\|D^{s,\ell}_x\partial^{j}_{x}u \big\|_{L_x^{\infty}L^2_{T}}
+\sum_{ s-\frac{2j+1}{4}\leq \ell}\big\|\partial^{\ell}_{x}u\big\|_{L^{\infty}_{T}L_x^{2}}\big\|D^{s,\ell}_x\partial^{j}_{x}u \big\|_{L^2_{T}L_x^{\infty}}\Big)\notag \\
& +T^{\frac{1}{2}}\|D^{s}_{x}u\|_{L_T^{\infty}L^2_{x}}\|\partial_x^{j}u\|_{L_T^{2}L^{\infty}_{x}}+T^{\frac{7}{8}} \big\|J^{\frac{1}{4}}D^{s-\frac{1}{4}}_x u\big\|_{L^{\infty}_{T}L^2_{x}} \big\|D^{\frac{1}{4}}_x \partial^{j}_{x}u\big\|_{L^8_{T}L^4_{x}} \notag \\
\lesssim \ & T^{\frac{1}{2}}  \Big( \Big\|J^{s-\frac{2j+1}{4}-}u\Big\|_{L_x^{2}L^{\infty}_{T}}\big\|J^{s} \partial_x^j u\big\|_{L_x^{\infty}L^2_{T}}
+\|u\|_{L^{\infty}_{T}H_x^{s}}\Big\|J^{j+1/2}D_x^{\frac{2j-1}{4}}u\Big\|_{L_T^{2}L^{\infty}_{x}}\Big)\notag \\
& +T^{\frac{1}{2}}\|u\|_{L^{\infty}_{T}H_x^{s}}\Big\|J^{j+1/2}D_x^{\frac{2j-1}{4}}u\Big\|_{L_T^{2}L^{\infty}_{x}}+T^{\frac{7}{8}} \|u\|_{L^{\infty}_{T}H_x^{s}} \Big\|J^{s}D^{\frac{2j-1}{8}}_x u\Big\|_{L_T^{8}L^{4}_{x}} \notag \\
\lesssim_T \ & \|u\|_{X_T}^2. \nonumber
	\end{align}

Therefore, the proof is completed. 
\end{proof}

Now we consider the local well-posedness for  \eqref{hKdV} with $k=1$ in  weighted Sobolev spaces.

\textbf{Proof of Theorem \ref{lwp}. }Note that 
$$u= \chi_T(t)W(t)u_0-\chi_T(t)\int^t_0W(t-t')(u \partial_x^j u)(t')dt':=\mathscr{T} u ,$$
where $\chi_T$ is the usual smooth cut-off function. We shall estimate $\mathscr{T} u$ by using each norm in $X_T$.

\vspace{1mm}
\noindent {\bf{(i)  Estimate for $ \|\mathscr{T} u\|_{H^s_{x}}$.}}

\vspace{1.5mm}
By  \eqref{lwplem0}, one easily gets 
\begin{align}
	\|\mathscr{T} u\|_{L^{\infty}_{T}H^s_{x}}\leq  \|u_0\|_{H^s(\mathbb{R})}+\|z(t)\|_{L^{\infty}_{T}H^s_{x}}\lesssim  \|u_0\|_{H^s(\mathbb{R})}+T^{1/2}\|u\|^2_{X^T}.\label{Hs0}
\end{align}

\noindent {\bf{(ii)  Estimate for $\big\||x|^{r}\mathscr{T} u\big\|_{L^2_{x}}$.}}

\vspace{1.5mm}
It follows from  Minkovski's inequality, \eqref{eq:weight_x}, \eqref{eq:error_x} and Lemma \ref{lwplem} that
\begin{align}
	&\quad \big\||x|^{r}\mathscr{T} u\big\|_{L^2_{x}}
	\lesssim  \big\||x|^{r}W(t)u_0\big\|_{L^2_{x}}+\int_0^T\big\||x|^{r}W(t-t')(u\partial_x^j u)\big\|_{L^2_{x}}dt'\notag\\
	&\lesssim  \big\||x|^{r}u_0\big\|_{L^2_{x}}+(1+T)\|u_0\|_{H^s}+\int_0^T\big\||x|^{r}u \partial_x^j u \big\|_{L^2_{x}}dt +(1+T)\int_0^T\|u\partial_x^j u\|_{H^s} dt  ,\notag\\
&\lesssim  \big\||x|^{r}u_0\big\|_{L^2_{x}}+(1+T)\|u_0\|_{H^s}+T^{\frac{1}{2}}\big\||x|^{r}u\big\|_{L_T^{\infty}L^2_{x}}\|\partial_x^j u\|_{L_T^{2}L^{\infty}_{x}}+(1+T)T^{\frac{1}{2}}\|u\|_{X_T}^2 ,\notag\\
&\lesssim (1+T)\|u_0\|_{Z_{s, r}}+(1+T)T^{\frac{1}{2}}\|u\|_{X_T}^2. \label{xHs1}
\end{align}

\noindent {\bf{(iii)  Estimate for $\Big\|J^{s-\frac{2j+1}{4}-}\mathscr{T} u\Big\|_{L^{2}_xL^{\infty}_{T}}$.}}

\vspace{1.5mm}
By applying Minkovski's inequality, the maximal function estimate  \eqref{linear estimate:maximal} and Lemma \ref{lwplem}, we deduce
\begin{align}
	\Big\|J^{s-\frac{2j+1}{4}-}\mathscr{T} u\Big\|_{L^{2}_xL^{\infty}_{T}}&\lesssim   (1+T)^{\frac{3}{4}+}\Big(\|u_0\|_{H^{s}(\mathbb{R})}+\int_0^T\|u \partial^j_x u\|_{H^{s}(\mathbb{R})}d t \Big)\notag\\
	&\lesssim  (1+T)^{\frac{3}{4}+}\Big(\|u_0\|_{H^s(\mathbb{R})}+T^{1/2}\|u\|_{X_T}^2\Big).\label{xHs2}
\end{align}

\noindent {\bf{(iv)  Estimate for $\big\|J^s \partial^j_x  \mathscr{T} u \big\|_{L^{\infty}_xL^{2}_{T}}$.}}

\vspace{1.5mm}
By using Kato smoothing effect \eqref{linear estimate:Kato-1} and Lemma \ref{lwplem}, we  obtain
\begin{align}
	\big\|J^s \partial^j_x  \mathscr{T} u \big\|_{L^{\infty}_xL^{2}_{T}}&\lesssim  \|u_0\|_{H^{s}}+\int_0^T\|u\partial^j_x u\|_{H^{s}}dt \notag\\
	&\lesssim \|u_0\|_{H^s(\mathbb{R})}+T^{1/2}\|u\|_{X_T}^2.\label{xHs3}
\end{align}

\noindent {\bf{(v)  Estimate for $\Big\|J^{j+1/2}D^{\frac{2j-1}{4}}_{x} \mathscr{T} u\Big\|_{L_T^{2}L^{\infty}_{x}}$.}}

\vspace{1.5mm}
From  \eqref{linear estimate: strichartz cor1}  and Lemma \ref{lwplem}, one has
\begin{align}
		 \Big\|J^{j+1/2}D^{\frac{2j-1}{4}}_{x} \mathscr{T} u\Big\|_{L_T^{2}L^{\infty}_{x}}
		&\lesssim T^{1/4} \Big(\|u_0\|_{H^{s}}+\int_0^T\|u\partial^j_x u\|_{H^{s}} dt \Big) \notag\\
		&\lesssim T^{1/4} \Big(\|u_0\|_{H^s(\mathbb{R})}+T^{1/2}\| u\|^2_{X_T}\Big).\label{xHs4}
\end{align}

\noindent {\bf{(vi)  Estimate for $\Big\|J^{s}D^{\frac{2j-1}{8}}_{x} \mathscr{T} u\Big\|_{L_T^{8}L^{4}_{x}}$ and $\Big\|J^{s}D^{\frac{2j-1}{6}}_{x} \mathscr{T} u\Big\|_{L^{6}_{xT}}$ .}}

\vspace{1.5mm}
Applying Strichartz's estimate \eqref{ineq:strichartz1} with $\theta=1/2$, $p=4$ and $q=8$ yields
\begin{align}
		 \Big\|J^{s}D^{\frac{2j-1}{8}}_{x} \mathscr{T} u\Big\|_{L_T^{8}L^{4}_{x}}
		&\lesssim  \|u_0\|_{H^{s}}+\int_0^T\|u\partial^j_x u\|_{H^{s}} dt  \notag\\
		&\lesssim  \|u_0\|_{H^s(\mathbb{R})}+T^{1/2}\| u\|^2_{X_T}.\label{xHs5}
\end{align}
Similarly, one can get
\begin{align}
		\Big\|J^{s}D^{\frac{2j-1}{6}}_{x} \mathscr{T} u\Big\|_{L^{6}_{xT}}
		\lesssim  \|u_0\|_{H^s(\mathbb{R})}+T^{1/2}\| u\|^2_{X_T}.\label{xHs6}
\end{align}

 Then, \eqref{Hs0}-\eqref{xHs6} help imply that
\begin{align}
	\|\mathscr{T} u\|_{X_T}&< C_1(1+T)^{3/4+} \left(\|u_0\|_{Z_{s,r}}+T^{1/2}\|u\|_{X_T}^2\right).\label{lwpcontra1}
\end{align}
where $C_1$ is a positive constant.  A similar argument leads to the estimate
\begin{align}
	\|\mathscr{T} u-\mathscr{T} v\|_{X_T}&< C_1(1+T)^{3/4+} T^{1/2}(\|u\|_{X_T}+\|v\|_{X_T})	\|u- v\|_{X_T}.\label{lwpcontra2}
\end{align}

Hence, it follows from \eqref{lwpcontra1} and \eqref{lwpcontra2}  that  $\mathscr{T}$ is a contraction mapping on 
$$ B_r=\left\{u\in X_T\ \big| \hspace{2mm} \|u\|_{X_T} <r\right\}$$  
with $r=4C_1\|u_0\|_{Z_{s,r}}$ and $T=\min\left\{1,  (4C_1r)^{-2}\right\}$. Consequently, there exists a unique solution $u$ to \eqref{hKdV} and 
\begin{align}
	\| u\|_{X_T} \leqslant 4C_1\|u_0\|_{Z_{s,r}}.\nonumber
\end{align}
We finish the proof of this theorem.\hspace{86mm}$\square$

\begin{remark}\label{LWPH^s}
From the proof of Theorem \ref{lwp}, we see that higher-order generalized KdV equations  \eqref{hKdV} are local well-posedness in $H^s(\mathbb{R})$ for $s\geq j+1/2$. Moreover, \eqref{lwp001}  and \eqref{lwp002}  also hold true.
\end{remark}

 \section{Propagation of regularity}\label{PropRegu}

In this section, we focus on  discussing one-sided propagation of regularity for solutions to \eqref{hKdV}. We show Theorem \ref{prop-reg} by making use of the algebraic structure of higher-order generalized KdV equations  and local well-posedness results in $H^s(\mathbb{R})$, mainly \eqref{lwp001}  and \eqref{lwp002} (see Remark \ref{LWPH^s}). It is worth mentioning that we do not utilize weighted Sobolev spaces.

 Before stating our proof for Theorem \ref{prop-reg}, we  list the following properties concerning cutoff functions that will be used later.

\begin{lemma}\label{PropRegu-lem}
Let $\varepsilon>0$, $b\geq 5\varepsilon$  and $c_{\ell}>0$. Then there exists a real function $\chi_{\varepsilon,b}\in C^{\infty}(\mathbb{R})$  
\begin{equation}
	\chi_{\varepsilon,b}(x)=\left\{
	\begin{aligned}
		&0,\ \ \ x\leq \varepsilon ; \\
		&1,  \ \ \ x\geq b ,  \nonumber
	\end{aligned}
	\right.
\end{equation}
satisfying 
\begin{align}
&\hspace{10mm} \text{supp}\chi_{\varepsilon,b}\subset [\varepsilon, \infty), \hspace{10mm} \text{supp}\chi'_{\varepsilon,b}\subset [\varepsilon, b],\label{Chi0a1}   \\
&\chi_{\varepsilon,b}(x)\geq  \chi_{\varepsilon,b}(3\varepsilon) \geq \frac{\varepsilon}{2(b-3\varepsilon)},  \hspace{18mm} \text{for}  \hspace{5mm} x\geq 3\varepsilon,   \label{Chi0a2}    \\
&c_{\ell}\left|\chi^{(\ell)}_{\varepsilon,b}(x)\right|\leq  \chi'_{\varepsilon/3,b+\varepsilon}(x) \leq \frac{1}{b-3\varepsilon},  \hspace{11mm} \text{for}  \hspace{5mm} \ell \geq 1,  \label{Chi0a3}    \\
&\chi'_{\varepsilon,b}(x) \lesssim  \chi'_{\varepsilon/3,b+\varepsilon}(x)  \chi_{\varepsilon/3,b+\varepsilon}(x),  
\hspace{5mm} \chi'_{\varepsilon,b}(x) \lesssim  \chi_{\varepsilon/5,\varepsilon}(x). \label{Chi0a4}    
\end{align}
\end{lemma}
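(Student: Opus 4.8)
\textbf{Proof proposal for Lemma \ref{PropRegu-lem}.}

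The plan is to build $\chi_{\varepsilon,b}$ by integrating a carefully chosen bump function, so that all the stated comparisons between $\chi_{\varepsilon,b}$, its derivatives, and the auxiliary functions $\chi_{\varepsilon/3,b+\varepsilon}$, $\chi'_{\varepsilon/3,b+\varepsilon}$, $\chi_{\varepsilon/5,\varepsilon}$ can be read off from the supports and from a single normalization. First I would fix a nonnegative $\rho\in C^\infty_c(\mathbb{R})$ with $\operatorname{supp}\rho\subset[\varepsilon,b]$ and $\int_{\mathbb{R}}\rho=1$, and set $\chi_{\varepsilon,b}(x)=\int_{-\infty}^x\rho(y)\,dy$; this immediately gives $\chi_{\varepsilon,b}\in C^\infty$, $\chi_{\varepsilon,b}\equiv0$ on $(-\infty,\varepsilon]$, $\chi_{\varepsilon,b}\equiv1$ on $[b,\infty)$, and the two support statements in \eqref{Chi0a1} since $\chi'_{\varepsilon,b}=\rho$. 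To be able to compare different members of the family, I would actually fix \emph{one} master profile: choose $\rho$ so that on $[3\varepsilon, b]$ we have a definite lower bound, e.g. arrange $\rho$ to be, after rescaling, a fixed shape, and note that $\chi_{\varepsilon/3,b+\varepsilon}$ and $\chi_{\varepsilon/5,\varepsilon}$ are built from the \emph{same} master profile rescaled to the intervals $[\varepsilon/3,b+\varepsilon]$ and $[\varepsilon/5,\varepsilon]$ respectively. The key geometric facts I will use repeatedly are the nesting $[\varepsilon,b]\subset[\varepsilon/3,b+\varepsilon]$ and, for the last comparison, $[\varepsilon,b]$ versus $[\varepsilon/5,\varepsilon]$; here one must be slightly careful, since $\operatorname{supp}\chi'_{\varepsilon,b}=[\varepsilon,b]$ is \emph{not} contained in $\operatorname{supp}\chi'_{\varepsilon/5,\varepsilon}=[\varepsilon/5,\varepsilon]$, which is why the second inequality in \eqref{Chi0a4} is stated with $\chi_{\varepsilon/5,\varepsilon}$ (which equals $1$ on $[\varepsilon,\infty)$) rather than its derivative.

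For \eqref{Chi0a2}: since $\chi_{\varepsilon,b}$ is nondecreasing, $\chi_{\varepsilon,b}(x)\ge\chi_{\varepsilon,b}(3\varepsilon)$ for $x\ge3\varepsilon$; and $\chi_{\varepsilon,b}(3\varepsilon)=\int_\varepsilon^{3\varepsilon}\rho$. Choosing the master profile so that $\rho\ge \tfrac{1}{b-3\varepsilon}$ on a subinterval of $[\varepsilon,3\varepsilon]$ of length comparable to $\varepsilon$ — more precisely normalizing so that $\int_\varepsilon^{3\varepsilon}\rho\ge \frac{\varepsilon}{2(b-3\varepsilon)}$, which is consistent with $\int\rho=1$ because $b\ge5\varepsilon$ forces $\frac{\varepsilon}{2(b-3\varepsilon)}\le\frac12<1$ — yields the claim. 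For \eqref{Chi0a3}: the bound $\chi'_{\varepsilon/3,b+\varepsilon}(x)\le\frac{1}{b-3\varepsilon}$ is just the sup bound on the rescaled master profile, since its support $[\varepsilon/3,b+\varepsilon]$ has length $b+\tfrac23\varepsilon\ge b-3\varepsilon$ and a fixed-mass bump on an interval of length $L$ has sup $\lesssim 1/L$ (choose the master profile to make this a clean inequality). The inequality $c_\ell|\chi^{(\ell)}_{\varepsilon,b}(x)|\le\chi'_{\varepsilon/3,b+\varepsilon}(x)$ for $\ell\ge1$ is the one place a genuine construction is needed: I would invoke the standard fact (Whitney-type / mollifier bookkeeping) that one can choose the cutoff so that each derivative $\chi^{(\ell)}_{\varepsilon,b}$ is pointwise dominated, up to the prescribed constant $c_\ell$, by a \emph{fixed} positive bump supported on the slightly larger interval $[\varepsilon/3,b+\varepsilon]$ and bounded below there; concretely, pick $\chi'_{\varepsilon/3,b+\varepsilon}$ first to be strictly positive on $[\varepsilon/2,b+\varepsilon/2]\supset[\varepsilon,b]=\operatorname{supp}\chi^{(\ell)}_{\varepsilon,b}$ for all $\ell$, with a known positive minimum $m$ there, and then choose $\chi_{\varepsilon,b}$ with $\|\chi^{(\ell)}_{\varepsilon,b}\|_\infty\le m/c_\ell$ for every $\ell$ (a countable family of constraints, achievable by taking a sufficiently gentle transition, e.g. a suitably rescaled fixed smooth step). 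Finally \eqref{Chi0a4}: the first inequality follows because $\operatorname{supp}\chi'_{\varepsilon,b}=[\varepsilon,b]$ lies inside the region where $\chi_{\varepsilon/3,b+\varepsilon}\equiv1$ except on a left sliver, and where $\chi'_{\varepsilon/3,b+\varepsilon}$ is bounded below by $m$; hence $\chi'_{\varepsilon,b}\le\|\chi'_{\varepsilon,b}\|_\infty\lesssim 1\lesssim \chi'_{\varepsilon/3,b+\varepsilon}\chi_{\varepsilon/3,b+\varepsilon}$ pointwise on $[\varepsilon,b]$ (using $\chi_{\varepsilon/3,b+\varepsilon}\ge\chi_{\varepsilon/3,b+\varepsilon}(\varepsilon)>0$ there, by the same monotonicity argument as in \eqref{Chi0a2}); the second inequality is immediate since $\chi_{\varepsilon/5,\varepsilon}\equiv1$ on $[\varepsilon,\infty)\supset\operatorname{supp}\chi'_{\varepsilon,b}$ and $\chi'_{\varepsilon,b}$ is bounded.

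The main obstacle, such as it is, is purely bookkeeping: one must choose the whole finite/countable family of cutoffs $\chi_{\varepsilon,b}$, $\chi_{\varepsilon/3,b+\varepsilon}$, $\chi_{\varepsilon/5,\varepsilon}$ \emph{compatibly} — from a common master bump rescaled and translated — so that the nesting of supports is exact and the normalization constants line up to give \eqref{Chi0a2}, \eqref{Chi0a3} with the prescribed $c_\ell$, and the two estimates in \eqref{Chi0a4} simultaneously; once the master profile and its scalings are fixed, every displayed inequality reduces to monotonicity of a primitive, a sup-bound of the form $\|\text{bump}\|_\infty\lesssim 1/(\text{length})$, and the observation that a mollified step has all derivatives controlled by a single fixed bump on a slightly enlarged support. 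I expect no analytic difficulty beyond this; I would therefore present the construction explicitly, verify \eqref{Chi0a1} directly, and then check \eqref{Chi0a2}–\eqref{Chi0a4} in that order, each in one or two lines.
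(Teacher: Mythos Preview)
The paper's own proof of this lemma is a single sentence: ``See \cite{ILP15}.'' There is no argument in the paper to compare against --- you are effectively reconstructing the construction that \cite{ILP15} carries out. Your outline (build each $\chi_{\varepsilon,b}$ as the antiderivative of a rescaled copy of a fixed nonnegative bump; read off \eqref{Chi0a1} from the support; get \eqref{Chi0a2} from monotonicity together with a lower bound on the mass over $[\varepsilon,3\varepsilon]$; and deduce \eqref{Chi0a3}--\eqref{Chi0a4} from the nesting $[\varepsilon,b]\subset[\varepsilon/3,b+\varepsilon]$ and the fact that $\chi_{\varepsilon/5,\varepsilon}\equiv1$ on $[\varepsilon,\infty)$) is exactly the right strategy, and is what the cited reference does.

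The one spot where your write-up would need tightening before it stands on its own is the left inequality in \eqref{Chi0a3}. You propose to ``pick $\chi'_{\varepsilon/3,b+\varepsilon}$ first'' with a positive minimum $m$ on $[\varepsilon,b]$ and ``then choose $\chi_{\varepsilon,b}$'' with $\|\chi^{(\ell)}_{\varepsilon,b}\|_\infty\le m/c_\ell$; but since both functions are members of the \emph{same} two-parameter family, this sequential choice is not genuinely available --- changing the master profile to adjust $\chi_{\varepsilon,b}$ also changes $\chi_{\varepsilon/3,b+\varepsilon}$ and hence $m$. You correctly flag this at the end as the ``bookkeeping'' issue, but the paragraph preceding it reads as if the two choices were independent. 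In practice one writes the family down explicitly once and then checks that the resulting constants in \eqref{Chi0a3} are uniform in $(\varepsilon,b)$ (in particular, the $c_\ell$ should be read as determined by the construction and by $\ell$, not as arbitrary prescribed input, despite the ``Let \dots $c_\ell>0$'' phrasing). That verification is precisely what \cite{ILP15} supplies, and is all the present paper invokes.
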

\begin{proof}
See \cite{ILP15}.
\end{proof}

\textbf{Proof of Theorem \ref{prop-reg}. } We use the induction argument. For simplicity, we only consider $k=1$.  And we may assume that $x_0=0$ without loss of generality.

\vspace{1mm}
\noindent {\bf{ Case 1 $m=j+1$.}} 

\vspace{1mm}
Let us first show \eqref{prop-reg01} for $\ell=0$. Multiplying the equation \eqref{hKdV} by $u(t,x)\chi_{\varepsilon,b}(x+vt)$ gives  
\begin{align}
&\frac{1}{2}\frac{d}{dt} \int u^2\chi_{\varepsilon,b}(x+vt)dx- v\int u^2\chi'_{\varepsilon,b}(x+vt)dx\notag \\
&+\int\partial_x^{2j+1}u  u \chi _{\varepsilon,b}(x+vt)dx+\int u \partial_x^{j}u u \chi _{\varepsilon,b}(x+vt)dx=0 \label{PropRegu0a0ell0a}
	\end{align}

Note that
\begin{align}
v\int_0^T\int u^2\chi'_{\varepsilon,b}(x+vt)dxdt\lesssim T \|u\|^2_{L^{\infty}_TH^{j+1/2}_x}<c. \label{PropRegu0a0ell0a1}
	\end{align}
A direct calculation deduces  
\begin{align}
\partial_x^{2j+1}u u =\frac{1}{2}\sum_{\ell=0}^{j}c_{\ell}\partial_x^{2\ell+1}\left((\partial_x^{j-\ell}u)^2\right)
\label{PRdd2}
	\end{align}
where coefficient $c_0,\cdots,c_{j}$   are determined by the following linear equation system
\begin{equation}
	\left\{
	\begin{aligned}
		&\sum_{m\leq \ell\leq j} c_{\ell}\binom{2\ell+1}{\ell-m}=0,\ \ \ m=0,1,\cdots,j-1; \\
		&\hspace{3mm} c_{j}=1.  \label{PropRegu-coeffi}
	\end{aligned}
	\right.
\end{equation}
It is worth mentioning that $c_0 \neq0$. After integration by parts, one sees  
\begin{align}
\int\partial_x^{2j+1}u  u \chi _{\varepsilon,b}(x+vt)dx
=&\frac{1}{2}\sum_{\ell=0}^{j}c_{\ell}\int \partial_x^{2\ell+1}\left((\partial_x^{j-\ell}u)^2\right)\chi _{\varepsilon,b}(x+vt)dx\notag \\
=&-\frac{1}{2}\sum_{\ell=0}^{j}c_{\ell}\int (\partial_x^{j-\ell}u)^2\chi^{(2\ell+1)} _{\varepsilon,b}(x+vt)dx\nonumber
	\end{align}
From local well-posedness result, for $0\leq \ell\leq j$, we get
\begin{align}
\int_0^T \int \left|(\partial_x^{j-\ell}u)^2\chi^{(2\ell+1)} _{\varepsilon,b}(x+vt)\right|dx dt\lesssim T \|u\|^2_{L^{\infty}_TH^{j+1/2}_x}<c. \label{PropRegu0a0ell0a2}
	\end{align}
Moreover,
\begin{align}
\left|\int u \partial_x^{j}u u \chi _{\varepsilon,b}(x+vt)dx\right| 
\leq& \|\partial_x^{j}u\|_{L^{\infty}_x}\int u^2 \chi _{\varepsilon,b}(x+vt)dx\notag \\
\leq &\|u\|_{L^{\infty}_TH^{j+1/2}_x}\int u^2 \chi _{\varepsilon,b}(x+vt)dx. \label{PropRegu0a0ell0a3}
	\end{align}

Inserting \eqref{PropRegu0a0ell0a1}, \eqref{PropRegu0a0ell0a2}-\eqref{PropRegu0a0ell0a3} into  \eqref{PropRegu0a0ell0a}, and using Gronwall's inequality, we immediately obtain
$$\sup_{[0,T]}\int u^2 \chi _{\varepsilon,b}(x+vt)dx\leq c_0$$
with $c_0=c_0(\varepsilon;b;v)>0$, which proves the case $\ell=0$.

Next we only consider the case $\ell=m=j+1$, because other cases are easier. 

Acting $\partial_x^{j+1}$ on the equation \eqref{hKdV} and multiplying by $\partial_x^{j+1}u(t,x)\chi_{\varepsilon,b}(x+vt)$ yield
\begin{align}
&\frac{1}{2}\frac{d}{dt} \int(\partial_x^{j+1}u)^2\chi_{\varepsilon,b}(x+vt)dx- v\int(\partial_x^{j+1}u)^2\chi'_{\varepsilon,b}(x+vt)dx\notag \\
&+\int\partial_x^{3j+2}u \partial_x^{j+1}u \chi _{\varepsilon,b}(x+vt)dx+\int\partial_x^{j+1}(u \partial_x^{j}u)\partial_x^{j+1}u \chi _{\varepsilon,b}(x+vt)dx=0 \label{PropRegu0a0}
	\end{align}
Notice that
\begin{align}
\partial_x^{3j+2}u \partial_x^{j+1}u =\frac{1}{2}\sum_{\ell=0}^{j}c_{\ell}\partial_x^{2\ell+1}\left((\partial_x^{2j+1-\ell}u)^2\right)
\label{PropRegu0a1bb}
	\end{align}
where coefficient $c_0,\cdots,c_{j}$   are gave by   \eqref{PropRegu-coeffi}. Then, substituting \eqref{PropRegu0a1bb} into \eqref{PropRegu0a0} and using integration by parts, we obtain 
\begin{align}
&\frac{1}{2}\frac{d}{dt} \int(\partial_x^{j+1}u)^2\chi_{\varepsilon,b}(x+vt)dx-\frac{c_0}{2}\int(\partial_x^{2j+1}u)^2\chi'_{\varepsilon,b}(x+vt)dx\notag \\
=& v\int(\partial_x^{j+1}u)^2\chi'_{\varepsilon,b}(x+vt)dx-\int\partial_x^{j+1}(u \partial_x^{j}u)\partial_x^{j+1}u \chi _{\varepsilon,b}(x+vt)dx\notag \\
&+\frac{1}{2}\sum_{\ell=1}^{j}c_{\ell} \int(\partial_x^{2j+1-\ell}u)^2\chi^{(2\ell+1)}_{\varepsilon,b}(x+vt)dx :=A_1+A_2+A_3.\label{PropRegu0aaa0}
	\end{align}

Integrating on the time interval $[0,T]$ and applying \eqref{lwp001}  yield
\begin{align}
\int_0^T |A_1(t)|dt&\leq v \int_0^T\int(\partial_x^{j+1}u)^2\chi'_{\varepsilon,b}(x+vt)dxdt\notag \\
&\lesssim v(b-\varepsilon)\|\partial_x^{j+1}u\|^2_{L^{\infty}_xL^2_T}<c.\label{PropRegu0aaa01}
	\end{align}

Note that
\begin{align}
A_2=&-\int \partial_x^{j+1}(u \partial_x^{j}u)\partial_x^{j+1}u \chi _{\varepsilon,b}(x+vt)dx\notag \\
=&\sum_{0\leq \ell \leq j}d_{\ell}\int \partial_x^{\ell}u( \partial_x^{j+1}u)^2 \chi^{(j-\ell)}_{\varepsilon,b}(x+vt)dx\notag \\
&+\sum_{0\leq \ell_1\leq \ell_2\leq j}d_{\ell_1,\ell_2}\int \partial_x^{j+1}u \partial_x^{\ell_1}u \partial_x^{\ell_2}u \chi^{(2j+1-\ell_1-\ell_2)}_{\varepsilon,b}(x+vt)dx\notag \\
&+\sum_{0\leq \ell_1\leq \ell_2\leq \ell_3\leq j}d_{\ell_1,\ell_2,\ell_3}\int  \partial_x^{\ell_1}u \partial_x^{\ell_2}u  \partial_x^{\ell_3}u\chi^{(3j+2-\ell_1-\ell_2-\ell_3)}_{\varepsilon,b}(x+vt)dx\notag \\
:=&\sum_{0\leq \ell \leq j}A_{2,\ell}+\sum_{0\leq \ell_1\leq \ell_2\leq j}A_{2,\ell_1,\ell_2}++\sum_{0\leq \ell_1\leq \ell_2\leq \ell_3\leq j}A_{2,\ell_1,\ell_2,\ell_3}.\label{PropRegu0aaa02a0}
	\end{align}
It is easy to see
\begin{align}
|A_{2,j}|= &\left|d_{j}\int \partial_x^{j}u( \partial_x^{j+1}u)^2 \chi _{\varepsilon,b}(x+vt)dx\right|\notag \\
\lesssim& \ \|\partial_x^{j}u\|_{L^{\infty}_x}\int (\partial_x^{j+1}u)^2 \chi _{\varepsilon,b}(x+vt)dx\label{PropRegu0aaa02a1}
	\end{align}
where the last integral is the quantity to be estimated and the term before the integral in  \eqref{PropRegu0aaa02a1} satisfies 
$$\int_0^T\|\partial_x^{j}u\|_{L^{\infty}_x}dt\leq T^{1/2}\|\partial_x^{j}u\|_{L^{2}_TL^{\infty}_x}<\infty$$
by local well-posedness theory. For $0\leq \ell\leq j-1$, 
\begin{align}
|A_{2,\ell}|\lesssim& \int \left|\partial_x^{\ell}u( \partial_x^{j+1}u)^2 \chi^{(j-\ell)}_{\varepsilon,b}(x+vt)\right|dx\notag \\
\lesssim& \ \|\partial_x^{\ell}u\|_{L^{\infty}_x}\int (\partial_x^{j+1}u)^2 \chi^{(j-\ell)}_{\varepsilon,b}(x+vt)dx\notag \\
\lesssim& \ \|u\|_{L^{\infty}_TH^{j+1/2}_x}\int (\partial_x^{j+1}u)^2 \chi^{(j-\ell)}_{\varepsilon,b}(x+vt)dx\nonumber
	\end{align}
which yields that
\begin{align}
\int_0^T|A_{2,\ell}|dt\lesssim&  \|u\|_{L^{\infty}_TH^{j+1/2}_x}\int_0^T\int (\partial_x^{j+1}u)^2 \chi^{(j-\ell)}_{\varepsilon,b}(x+vt)dx dt\notag \\
\lesssim&  \|u\|_{L^{\infty}_TH^{j+1/2}_x}\|\partial_x^{j+1}u\|_{L^{\infty}_xL^{2}_T}<\infty.
\label{PropRegu0aaa02a2}
	\end{align}

For $A_{2,\ell_1,\ell_2}$, by using H\"older's inequality,
\begin{align}
|A_{2,\ell_1,\ell_2}|\lesssim&  \int (\partial_x^{j+1}u)^2 \chi^{(j-\ell)}_{\varepsilon,b}(x+vt)dx + \int \left(\partial_x^{\ell_1}u\partial_x^{\ell_2}u\right)^2 \chi^{(j-\ell)}_{\varepsilon,b}(x+vt)dx \nonumber
	\end{align}
which deduces by integrating on time interval $[0,T]$ that
\begin{align}
\int_0^T|A_{2,\ell_1,\ell_2}|dt\lesssim&  \|\partial_x^{j+1}u\|_{L^{\infty}_xL^{2}_T}^2 + \|\partial_x^{\ell_1}u\|_{L^{\infty}_{xT}}^2 \|\partial_x^{\ell_2}u\|_{L^{\infty}_xL^{2}_T}^2 <\infty
\label{PropRegu0aaa02a3}
	\end{align}
for $0\leq \ell_1\leq \ell_2\leq j$. And, $A_{2,\ell_1,\ell_2,\ell_3}$ can be controlled in a similar way.

Finally, we consider the term $A_3$ in  \eqref{PropRegu0aaa0}. We observe that 
\begin{align}
\int_0^T|A_{3}|dt\lesssim&  \sum_{\ell=1}^{j} \int_0^T \int(\partial_x^{2j+1-\ell}u)^2\chi^{(2\ell+1)}_{\varepsilon,b}(x+vt)dx dt \notag \\
\lesssim&   \sum_{\ell=1}^{j}  \|\partial_x^{2j+1-\ell}u\|_{L^{\infty}_xL^{2}_T}^2 <\infty.
\label{PropRegu0aaa03a}
	\end{align}

Inserting the estimates  \eqref{PropRegu0aaa01}-\eqref{PropRegu0aaa03a}  into \eqref{PropRegu0aaa0} and using Gronwall’s inequality, one gets the desired result
$$ \sup_{0\leq t\leq T}\int(\partial_x^{j+1}u)^2\chi_{\varepsilon,b}(x+vt)dx+\int_0^T\int(\partial_x^{2j+1}u)^2\chi'_{\varepsilon,b}(x+vt)dxdt<c_0$$
with $c_0=c_0(\varepsilon; b; v; T)$.

\vspace{1mm}
\noindent {\bf{ Case 2 $m\geq j+2$.}} 

\vspace{1mm}

Assume that, for $m_0\geq j+1$,
\begin{align}
\sup_{0\leq t\leq T}\int_{\varepsilon-vt}^{\infty}|\partial_x^{\ell}u|^2dx+\int_0^T\int_{\varepsilon-vt}^{R+vt}(\partial_x^{m_0+j}u)^2dxdt<c 
\label{PropReguInduAssu}
	\end{align}
holds true for $0\leq \ell\leq m_0$ under the condition
$$\int_0^{\infty}|\partial_x^{m_0}u_0(x)|^2dx<\infty,$$
we need to show  \eqref{PropReguInduAssu} replacing $m_0$ by $m=m_0+1$. For simplicity, we only consider $\ell=m$.

Acting $\partial_x^{m}$ on the equation \eqref{hKdV} and multiplying by $\partial_x^{m}u(t,x)\chi_{\varepsilon,b}(x+vt)$ as before, one can get  
\begin{align}
&\frac{1}{2}\frac{d}{dt} \int(\partial_x^{m}u)^2\chi_{\varepsilon,b}(x+vt)dx- v\int(\partial_x^{m}u)^2\chi'_{\varepsilon,b}(x+vt)dx\notag \\
&+\int\partial_x^{m+2j+1}u \partial_x^{m}u \chi _{\varepsilon,b}(x+vt)dx+\int\partial_x^{m}(u \partial_x^{j}u)\partial_x^{m}u \chi _{\varepsilon,b}(x+vt)dx=0 \label{11PropRegu0m}
	\end{align}

If $m\geq 2j+1$, then
$$\int_0^T \int (\partial_x^{m}u)^2\chi'_{\varepsilon,b}(x+vt)dx dt$$
can be controlled by the assumption \eqref{PropReguInduAssu}. If $j+2\leq m\leq 2j$, one can control 
the term above by $  \|J^{j+1/2}\partial_x^{j}u\|_{L^{\infty}_xL^{2}_T}^2 $ which is bounded from the local well-posedness theory.

It follows from \eqref{PRdd2} that
\begin{align}
\partial_x^{m+2j+1}u \partial_x^{m}u =\frac{1}{2}\sum_{\ell=0}^{j}c_{\ell}\partial_x^{2\ell+1}\left((\partial_x^{m+j-\ell}u)^2\right).\nonumber
	\end{align}
Hence, after integration by part
\begin{align}
&\int_0^T \int\partial_x^{m+2j+1}u \partial_x^{m}u \chi _{\varepsilon,b}(x+vt)dxdt \notag \\
 =&-\frac{c_0}{2}\int_0^T\int  (\partial_x^{m+j}u)^2 \chi'_{\varepsilon,b}(x+vt)dx dt\notag \\
&-\frac{1}{2}\sum_{\ell=1}^{j}c_{\ell}\int_0^T\int (\partial_x^{m+j-\ell}u)^2 \chi^{(2\ell+1)}_{\varepsilon,b}(x+vt)dx dt.  \label{11PropRegu0m2}
	\end{align}
The first term in the right-hand side of \eqref{11PropRegu0m2} is what we want to estimate and  the second  term is bounded from \eqref{Chi0a3} and the assumption \eqref{PropReguInduAssu}.

To address the last term in the left-hand side of \eqref{11PropRegu0m}, we write 
\begin{align}
&\int \partial_x^{m}(u \partial_x^{j}u)\partial_x^{m}u \chi _{\varepsilon,b}(x+vt)dx\notag \\
=&\beta_{0}\int \partial_x^{j}u( \partial_x^{m}u)^2 \chi_{\varepsilon,b}(x+vt)dx\notag \\
&+\sum_{\substack{0\leq \ell_1\leq [\frac{j-1}{2}], \\ \ell_2 \geq1} }\beta_{\ell_1,\ell_2}\int \partial_x^{j-2\ell_1-\ell_2}u( \partial_x^{m+\ell_1}u)^2 \chi^{(\ell_2)}_{\varepsilon,b}(x+vt)dx\notag \\ 
&+\beta_{1}\int  \partial_x^{j+1}u \partial_x^{m-1}u  \partial_x^{m}u \chi_{\varepsilon,b}(x+vt)dx\notag \\
&+\sum_{ 2\leq \ell \leq \frac{m-j}{2}   }\beta_{\ell}\int  \partial_x^{j+\ell}u \partial_x^{m-\ell}u  \partial_x^{m}u \chi_{\varepsilon,b}(x+vt)dx\notag \\
:=&B_0+\sum_{ \ell_1\leq [\frac{j-1}{2}],  \ell_2 \geq1 }B_{\ell_1,\ell_2}+B_1+\sum_{ 2\leq \ell \leq \frac{m-j}{2}   }B_{\ell}.\label{11PropRegu0m3}
	\end{align}
Then
	\begin{align}
|B_0|\lesssim & \|\partial_x^{j}u\|_{L^{\infty}_x}\int ( \partial_x^{m}u)^2 \chi_{\varepsilon,b}(x+vt)dx \notag \\
\lesssim &\|u\|_{H^{j+1/2}_x}\int ( \partial_x^{m}u)^2 \chi_{\varepsilon,b}(x+vt)dx \nonumber
	\end{align}
where the last integral is the quantity to be estimated.

After integration in time, using  \eqref{Chi0a3} and the assumption \eqref{PropReguInduAssu} deduce that
	\begin{align}
\int_0^T|B_{\ell_1,\ell_2}|dt\lesssim & \|\partial_x^{j-2\ell_1-\ell_2}u\|_{L^{\infty}_x}\int_0^T\int ( \partial_x^{m+\ell_1}u)^2 \chi'_{\varepsilon/3,b+\varepsilon}(x+vt)dxdt<\infty.  \nonumber
	\end{align}

For the third term $B_{1}$, it follows from the identity
$$\partial_x^{m-1}u\partial_x^{m}u=\frac{1}{2}\partial_x(\partial_x^{m-1}u)^2$$
 that
	\begin{align}
\int  \partial_x^{j+1}u \partial_x^{m-1}u  \partial_x^{m}u \chi_{\varepsilon,b}(x+vt)dx=&-\frac{1}{2}\int  \partial_x^{j+2}u (\partial_x^{m-1}u)^2 \chi_{\varepsilon,b}(x+vt)dx\notag \\
&-\frac{1}{2}\int  \partial_x^{j+1}u (\partial_x^{m-1}u)^2 \chi'_{\varepsilon,b}(x+vt)dx.\label{11PropRegu0m3B3}
	\end{align}
The first term in the right-hand side of \eqref{11PropRegu0m3B3} is bounded by using the argument as $B_0$. So, we only need to estimate the second term. Applying \eqref{Chi0a4} deduces
	\begin{align}
&\int  \left|\partial_x^{j+1}u (\partial_x^{m-1}u)^2 \chi'_{\varepsilon,b}(x+vt)\right|dx \notag \\
\leq &\|\partial_x^{j+1}u\chi'_{\varepsilon/3,b+\varepsilon}(x+vt)\|_{L^{\infty}_x} \int \left|  (\partial_x^{m-1}u)^2 \chi_{\varepsilon/3,b+\varepsilon}(x+vt)\right|dx \nonumber
	\end{align}
of which the later integral is bounded by the assumption \eqref{PropReguInduAssu}.

Note that 
\begin{align}
&\|\partial_x^{j+1}u\chi'_{\varepsilon/3,b+\varepsilon}(x+vt)\|^2_{L^{\infty}_x}\notag\\
\lesssim& \|(\partial_x^{j+1}u)^2\chi'_{\varepsilon/3,b+\varepsilon}(x+vt)\|_{L^{\infty}_x}\notag \\
\lesssim&  \left|\int  \partial_x\left((\partial_x^{j+1}u)^2\chi'_{\varepsilon/3,b+\varepsilon}(x+vt)\right)dx\right| \notag \\
\lesssim & \int  \partial_x^{j+1}u\partial_x^{j+2}u\chi'_{\varepsilon/3,b+\varepsilon}(x+vt)dx +\int  (\partial_x^{j+1}u)^2\chi''_{\varepsilon/3,b+\varepsilon}(x+vt)dx \notag\\
\lesssim & \int  (\partial_x^{j+1}u)^2\chi'_{\varepsilon/3,b+\varepsilon}(x+vt)dx +\int  (\partial_x^{j+1}u)^2\chi'_{\varepsilon/3,b+\varepsilon}(x+vt)dx\notag\\
&+\int  (\partial_x^{j+1}u)^2\chi'_{\varepsilon/9,b+\varepsilon/3}(x+vt)dx. \label{11PropRegu0m3B31}
\end{align}
 Integrating on the interval $[0, T]$ and applying the assumption \eqref{PropReguInduAssu}, from \eqref{11PropRegu0m3B31} one can control 
$$\|\partial_x^{j+1}u\chi'_{\varepsilon/3,b+\varepsilon}(x+vt)\|_{L^{\infty}_x}$$
which by \eqref{11PropRegu0m3B3} further  implies that $$\int_0^T|B_{1}|dt<\infty.$$

Finally, let us consider $B_{\ell}$. By H\"older's inequality, we get
	\begin{align}
|B_{\ell}|\leq \int  (\partial_x^{j+\ell}u \partial_x^{m-\ell}u)^2 \chi_{\varepsilon,b}(x+vt)dx + \int (\partial_x^{m}u)^2\chi_{\varepsilon,b}(x+vt)dx  \label{11PropRegu0m3B4}
	\end{align}
where the last integral is what to be estimated.  

Observe that
$$\chi_{\varepsilon/5,\varepsilon}(x)=1 \hspace{5mm} \text{on} \hspace{2mm} \text{supp} \chi_{\varepsilon,b} \subset [\varepsilon, \infty),$$
hence 
\begin{align}
 &\int  (\partial_x^{j+\ell}u \partial_x^{m-\ell}u)^2 \chi_{\varepsilon,b}(x+vt)dx \notag \\
\lesssim &  \|(\partial_x^{j+\ell}u)^2 \chi_{\varepsilon/5,\varepsilon}(x+vt)\|_{L^{\infty}_x}\int  (\partial_x^{m-\ell}u)^2 \chi_{\varepsilon,b}(x+vt)dx \label{11PropRegu0m3B41}
	\end{align}
of which the last integral is bounded by  induction \eqref{PropReguInduAssu} for $2\leq \ell \leq \frac{m-j}{2} $. According to Sobolev embedding,
\begin{align}
 &\int_0^T \|(\partial_x^{j+\ell}u)^2 \chi_{\varepsilon/5,\varepsilon}(x+vt)\|_{L^{\infty}_x} dt\notag \\
\lesssim & \int_0^T \left\|\partial_x\left((\partial_x^{j+\ell}u)^2 \chi_{\varepsilon/5,\varepsilon}(x+vt)\right)\right\|_{L^{1}_x}dt \notag \\
\lesssim &  \int_0^T\left\|\partial_x^{j+\ell}u\partial_x^{j+\ell+1}u \chi_{\varepsilon/5,\varepsilon}(x+vt)\right\|_{L^{1}_x}dt+ \int_0^T\left\|(\partial_x^{j+\ell}u)^2 \chi'_{\varepsilon/5,\varepsilon}(x+vt)\right\|_{L^{1}_x}dt\notag \\
\lesssim_T &  \sup_{0\leq t\leq T}\int (\partial_x^{j+\ell}u)^2 \chi_{\varepsilon/5,\varepsilon}(x+vt)dx+\sup_{0\leq t\leq T}\int (\partial_x^{j+\ell+1}u)^2 \chi_{\varepsilon/5,\varepsilon}(x+vt)dx\notag \\
&+ \int_0^T\int(\partial_x^{j+\ell}u)^2 \chi'_{\varepsilon/5,\varepsilon}(x+vt)dxdt<\infty \label{11PropRegu0m3B42}
	\end{align}
by  induction \eqref{PropReguInduAssu}  provided that  $2\leq \ell \leq \frac{m-j}{2} $.
\eqref{11PropRegu0m3B4}-\eqref{11PropRegu0m3B42} implies the desired result.
 
Then, substituting all these estimates into \eqref{11PropRegu0m} gives \eqref{PropReguInduAssu} for $m\geq j+2$ which completes the proof of Theorem \ref{prop-reg}.\hspace{84.5mm}$\square$

\section{Dispersive blow up} 

We investigate dispersive blow-up solutions  to \eqref{hKdV} in this section. In subsection \ref{Linearsingu}, we construct an initial datum  $u_0\in C^\infty(\mathbb{R}) $  such that the free solution  $W(t)u_0$ fails to be in $C^{j+1}(\mathbb{R})$ at all positive rational time. In subsection \ref{NonlSmo}, we show that the Duhamel term is smoother than the linear part. To be precise, we show that $z_1(t)\in H^{j+\frac{3}{2}+}(\mathbb{R})$ and therefore embedded in $C^{j+1}	(\mathbb{R})$. This suggests that the blow-up phenomenon appears due to the linear component of the solution.

\subsection{Linear singularities}\label{Linearsingu}

Choosing
$$\varphi(x)=e^{-2|x|^{j+1}},$$
one can easily verify that
$$e^{x}\varphi(x) \in L^2(\mathbb{R}), \hspace{8mm} \varphi(x) \in C^{\infty}(\mathbb{R}\backslash0)\backslash C^{j+1}(\mathbb{R}).$$
and  $\varphi(x)\in Z_{s,r}(\mathbb{R})$ for any $s\in\left[j+1, j+3/2\right)$.
 
We would like to consider the regularity of $\varphi $ under the higher-order linear KdV flow.

\begin{lemma}  \label{linbuplema}
Let $\varphi=e^{-2|x|^{j+1}}$. Then,  
\begin{align}
\left\|\partial^{\ell}_{x} e^{(-1)^{j+1}x}W(t)\varphi\right\|_{L^2_{x}}
		\lesssim   t^{-\frac{\ell}{2}} e^{t} \left\|e^{(-1)^{j+1}x}\varphi\right\|_{L^2}  
		<  \infty  \label{linbuplema1}
	\end{align}
for $t>0$, and 
\begin{align}
\left\|\partial^{\ell}_{x} e^{(-1)^{j}x}W(t)\varphi\right\|_{L^2_{x}}
		\lesssim   |t|^{-\frac{\ell}{2}} e^{-t} \left\|e^{(-1)^{j}x}\varphi\right\|_{L^2}  
		<  \infty   \label{linbuplema2}
	\end{align}
for $t<0$, where $\ell=0,1,\cdots$. As a consequence, one has $W(t)\varphi\in C^{\infty}(\mathbb{R}) $ for $t\neq0$.
\end{lemma}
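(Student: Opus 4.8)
The plan is to use a one-sided exponential weight to convert the purely oscillatory group $W(t)$ into a genuinely smoothing flow. Put $\mu=(-1)^{j+1}$ if $t>0$ and $\mu=(-1)^{j}$ if $t<0$, so that in both cases $t(-1)^{j+1}\mu=|t|$. Since $e^{\mu x}\partial_x e^{-\mu x}=\partial_x-\mu$, conjugating the generator of the group gives the identity
\begin{equation*}
e^{\mu x}\,W(t)\,e^{-\mu x}=e^{-t(\partial_x-\mu)^{2j+1}}=\mathscr{F}^{-1}\,e^{it(-1)^{j+1}(\xi+i\mu)^{2j+1}}\,\mathscr{F},
\end{equation*}
the symbol being that of $W(t)$ analytically continued to $\xi+i\mu$; consequently, with $\varphi_\mu:=e^{\mu x}\varphi$,
\begin{equation*}
\partial_x^{\ell}\big(e^{\mu x}W(t)\varphi\big)=\mathscr{F}^{-1}\Big((i\xi)^{\ell}\,e^{it(-1)^{j+1}(\xi+i\mu)^{2j+1}}\,\widehat{\varphi_\mu}(\xi)\Big).
\end{equation*}
First I would make this rigorous. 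Because $\varphi=e^{-2|x|^{j+1}}$ decays faster than $e^{c|x|}$ for every $c>0$, one has $\varphi_\mu\in L^1\cap L^2$, which already yields finiteness of the right-hand sides of \eqref{linbuplema1}--\eqref{linbuplema2}; moreover $\widehat\varphi$ extends holomorphically to $\{|\Im\zeta|\le1\}$ with $\widehat\varphi(\,\cdot+i\mu)=\widehat{\varphi_\mu}$, and a contour shift from $\mathbb R$ to $\mathbb R+i\mu$ in the Fourier representation of $W(t)\varphi$ is legitimate: the free symbol $e^{it(-1)^{j+1}\zeta^{2j+1}}$ stays bounded along, and decays across, the strip precisely on the side $\operatorname{sgn}(\Im\zeta)=\operatorname{sgn}\big((-1)^{j+1}t\big)$ — this is where the sign choice of $\mu$ enters — while the vertical pieces vanish against the decay of $\widehat\varphi$.

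The core estimate is then a Plancherel computation. The multiplier has modulus
\begin{equation*}
\big|e^{it(-1)^{j+1}(\xi+i\mu)^{2j+1}}\big|=e^{-t(-1)^{j+1}\Im\big((\xi+i\mu)^{2j+1}\big)},
\end{equation*}
and a binomial expansion gives $\Im\big((\xi+i\mu)^{2j+1}\big)=(2j+1)\mu\,\xi^{2j}+R(\xi)$ with $R$ real of degree $\le 2j-2$; using $t(-1)^{j+1}\mu=|t|$ and Young's inequality,
\begin{equation*}
-t(-1)^{j+1}\Im\big((\xi+i\mu)^{2j+1}\big)\le -(2j+1)|t|\,\xi^{2j}+C|t|\big(1+|\xi|\big)^{2j-2}\le -c|t|\,\xi^{2j}+C|t|.
\end{equation*}
Hence, splitting the frequency integral at $|\xi|=1$ and using $\xi^{2j}\ge\xi^{2}$ for $|\xi|\ge1$, Plancherel gives
\begin{equation*}
\big\|\partial_x^{\ell}\big(e^{\mu x}W(t)\varphi\big)\big\|_{L^2_x}^2\lesssim e^{C|t|}\Big(1+\sup_{|\xi|\ge1}|\xi|^{2\ell}e^{-c|t|\xi^{2}}\Big)\|\widehat{\varphi_\mu}\|_{L^2}^2\lesssim |t|^{-\ell}e^{C|t|}\,\big\|e^{\mu x}\varphi\big\|_{L^2}^2,
\end{equation*}
which, after tracking the constants, is exactly \eqref{linbuplema1} for $t>0$ and \eqref{linbuplema2} for $t<0$.

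For the final assertion, fix $t\neq0$ and apply the estimate with the weight $e^{\mu x}$ matched to the sign of $t$; together with the $\ell=0$ case this gives $e^{\mu x}W(t)\varphi\in H^{\ell}(\mathbb R)$ for every $\ell\in\mathbb N$, hence $e^{\mu x}W(t)\varphi\in C^{\infty}(\mathbb R)$ by Sobolev embedding, and multiplying by the smooth function $e^{-\mu x}$ gives $W(t)\varphi\in C^{\infty}(\mathbb R)$. I expect the main obstacle to be the rigorous justification of the conjugation identity and the accompanying contour deformation — i.e. verifying that the weighted free solution genuinely evolves by the symbol $e^{it(-1)^{j+1}(\xi+i\mu)^{2j+1}}$ — together with the sign bookkeeping that selects, for each sign of $t$, the weight for which this symbol is dissipative rather than exponentially amplifying; this is precisely why the statement must be split into the two cases \eqref{linbuplema1} and \eqref{linbuplema2}.
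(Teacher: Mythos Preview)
Your proposal is correct and follows essentially the same route as the paper: conjugate the free evolution by the one-sided exponential weight $e^{\mu x}$ so that the symbol becomes $e^{it(-1)^{j+1}(\xi+i\mu)^{2j+1}}$, then apply Plancherel and bound the modulus of this multiplier. The paper phrases the conjugation as passing to the PDE $\partial_t w+(\partial_x-\mu)^{2j+1}w=0$ for $w=e^{\mu x}W(t)\varphi$ rather than as a contour shift, but the content is identical. Your handling of the multiplier via the binomial expansion of $\Im\big((\xi+i\mu)^{2j+1}\big)$ and Young's inequality is in fact cleaner than the paper's intermediate step, which writes the exponent somewhat loosely as $-t(\xi^2-\xi^4+\cdots+\xi^{2j})$ before passing to $-t\xi^2$.
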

\begin{proof} It follows from Sobolev embedding theorem that 
\begin{align}
W(t)\varphi\in C^{\infty}(\mathbb{R}) 
\Longleftrightarrow& \hspace{1mm} e^{\pm x}W(t)\varphi\in C^{\infty}(\mathbb{R})	\notag \\
\Longleftrightarrow&  \hspace{1mm} e^{\pm x}W(t)\varphi\in H^{\ell}(\mathbb{R}) \hspace{3mm} \text{for all}  \hspace{2mm} \ell\in \mathbb{N}.	\nonumber	
\end{align}
So, it suffices to show \eqref{linbuplema1}. For simplicity, we only consider the case that $j$ is an odd number.

 Put $v(t)=W(t)\varphi$. It is easy to see that $v(t)$ is solution to the following linear equation
\begin{equation}
	\left\{
	\begin{aligned}
		&\partial_t v+\partial^{2j+1}_x v=0 , \quad \\
		&v(0,x)=\varphi(x).  \label{linear-hKdV} \\
	\end{aligned}
	\right.
\end{equation}
Denote  $w(t)=e^{x}v(t)$. Putting $v(t)=e^{-x}w(t)$  into  \eqref{linear-hKdV}, we see that $w(t)$ is   solution to
\begin{equation}
	\left\{
	\begin{aligned}
		&\partial_t w+(\partial_x-1)^{2j+1} w=0 , \quad \\
		&w(0,x)=e^{x}\varphi(x).  \nonumber \\
	\end{aligned}
	\right.
\end{equation}
By Fourier transform, one has
\begin{equation*}
	\widehat{w}=e^{-t(i\xi-1)^{2j+1}}\widehat{w_0}
\end{equation*}
which further implies by using Plancheral's identity that
\begin{equation*}
	\begin{aligned}
	\left\|\partial^{\ell}_{x} w\right\|_{L^2}
		= \big\||\xi|^{\ell}\widehat{w}\big\|_{L^2}
		=  &\left\||\xi|^{\ell}e^{-t(i\xi-1)^{2j+1}}\widehat{w_0}\right\|_{L^2}\\
		\lesssim &e^{t}\big\||\xi|^{\ell}e^{-t(\xi^2-\xi^4+\cdots+\xi^{2j})}\big\|_{L^{\infty}}	\|\widehat{w_0}\|_{L^2}\\
\lesssim &e^{t}\big\||\xi|^{\ell}e^{-t\xi^2}\big\|_{L^{\infty}}\left\|e^{x}\varphi\right\|_{L^2}\\
		\lesssim  & |t|^{-\frac{\ell}{2}} 	e^{t}\left\|e^{x}\varphi\right\|_{L^2}
		< \infty.
	\end{aligned} 	  	
\end{equation*}
This completes the proof.
\end{proof}
In the next place, we use the  function $\varphi(x)$ to construct a smooth initial datum $u_0$ such that the linear solution $W(t)u_0$ of \eqref{hKdV}  will display singularity  at each time-space positive rational point.

\begin{theorem}  \label{linbupthm}
Assume that
\begin{align}
	u_0=\sum_{\substack{p_2,q_2\in\mathbb{Z}^+,\\ gcd(p_2,q_2)=1}} \sum_{\substack{p_1,q_1\in\mathbb{Z}^+,\\ gcd(p_1,q_1)=1}}e^{-e^{(q_1+q_2)}}e^{-(p_1^2+p_2^2)} W\big(-\frac{p_2}{q_2}\big)\varphi\big(x-\frac{p_1}{q_1}\big)  \label{initval000}
\end{align}
where $\varphi(x)=e^{-2|x|^{j+1}}$ , then  we have 
\begin{equation*}
	\left\{\begin{array}{ll}
		W(t)u_0\in C^{\infty}	(\mathbb{R}),    &t>0, \ t\in \mathbb{R}\setminus\mathbb{Q}^*,\\
		W(t)u_0\in C^{\infty}	(\mathbb{R}\backslash \mathbb{Q}^+)\backslash C^{j+1}(\mathbb{R}),  \ \  &t>0, \ t\in \mathbb{Q}\subset\mathbb{Q}^*.
	\end{array}\right.	
\end{equation*}
\end{theorem}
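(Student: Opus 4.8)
The plan is to exploit the structure of the initial datum \eqref{initval000} as a doubly-indexed absolutely convergent series whose building blocks are translates and backward flows of the single profile $\varphi$. For each index pair we set
$$
\psi_{p_1,q_1,p_2,q_2}(x)=e^{-e^{(q_1+q_2)}}e^{-(p_1^2+p_2^2)} W\big(-\tfrac{p_2}{q_2}\big)\varphi\big(x-\tfrac{p_1}{q_1}\big),
$$
so that $u_0=\sum \psi_{p_1,q_1,p_2,q_2}$. First I would verify that this series and all its termwise $x$-derivatives converge absolutely and uniformly, using Lemma \ref{linbuplema}: the factor $e^{-e^{(q_1+q_2)}}$ utterly dominates the polynomial-in-$t^{-1}$ and the $e^{|p_2/q_2|}$ growth coming from \eqref{linbuplema1}--\eqref{linbuplema2} applied with $t=-p_2/q_2<0$, while $e^{-(p_1^2+p_2^2)}$ controls the sum over numerators. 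This both shows $u_0\in C^\infty(\mathbb{R})$ (indeed $u_0\in Z_{s,r}\cap C^\infty$) and, crucially, legitimizes applying $W(t)$ term by term: $W(t)u_0=\sum W(t)\psi_{p_1,q_1,p_2,q_2}$ with the series again converging in $C^\ell$ for every $\ell$.

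**The rational times.** Fix $t=a/b\in\mathbb{Q}$, $t>0$. In the term with $p_2/q_2$ we have $W(t)W(-p_2/q_2)\varphi(\cdot-p_1/q_1)=W\big(\tfrac{a}{b}-\tfrac{p_2}{q_2}\big)\varphi(\cdot-p_1/q_1)$. By Lemma \ref{linbuplema}, $W(\tau)\varphi\in C^\infty$ whenever $\tau\neq0$; the only terms that are \emph{not} smooth are exactly those with $p_2/q_2=a/b$, i.e. (since the fraction is in lowest terms) $p_2=a$, $q_2=b$, and for those the contribution is $e^{-e^{(q_1+b)}}e^{-(p_1^2+a^2)}\varphi(x-p_1/q_1)$, a sum of translated copies of $\varphi$ centered at the rational points $p_1/q_1$. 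I would then argue: (i) away from $\mathbb{Q}^+$ this residual sum is still $C^\infty$ because near any fixed irrational point only the tails $\varphi(x-p_1/q_1)$ with $p_1/q_1$ far away contribute singular behavior, and those are smooth there while the series of derivatives converges by the same weights — so $W(t)u_0\in C^\infty(\mathbb{R}\setminus\mathbb{Q}^+)$; (ii) at a rational point $x_0=c/d\in\mathbb{Q}^+$, isolate the single term with $p_1/q_1=x_0$: it equals $e^{-e^{(d+b)}}e^{-(c^2+a^2)}\varphi(x-x_0)$, which is \emph{not} $C^{j+1}$ at $x_0$ since $\varphi\notin C^{j+1}(\mathbb{R})$, while every other summand is $C^{j+1}$ near $x_0$ and their sum converges in $C^{j+1}$ by the weights; hence $W(t)u_0\notin C^{j+1}(\mathbb{R})$. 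Together these give the second line of the claim.

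**The generic irrational times.** Fix $t=t_0>0$ with $t_0\in\mathbb{R}\setminus\mathbb{Q}^*$. Now $W(t_0)W(-p_2/q_2)\varphi=W(t_0-p_2/q_2)\varphi$, and since $t_0$ is irrational we always have $t_0-p_2/q_2\neq0$, so \emph{every} term is individually $C^\infty$ by Lemma \ref{linbuplema}. The delicate point — and I expect this to be the main obstacle — is to sum infinitely many $C^\infty$ functions and still land in $C^\infty$: the bound \eqref{linbuplema1}--\eqref{linbuplema2} on $\|\partial_x^\ell W(\tau)\varphi\|_{L^2}$ blows up like $|\tau|^{-\ell/2}$ as $\tau\to0$, and $\tau=t_0-p_2/q_2$ does get arbitrarily close to $0$. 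This is exactly where the genericity hypothesis $t_0\in\mathbb{R}\setminus\mathbb{Q}^*$ enters: by Definition \ref{gene-irrat} and \eqref{generic2}, $|t_0-p_2/q_2|\gtrsim (p_2+q_2)^{-3}$, so $|t_0-p_2/q_2|^{-\ell/2}\lesssim (p_2+q_2)^{3\ell/2}$, a merely polynomial loss in $p_2,q_2$ that is crushed by the factor $e^{-e^{(q_1+q_2)}}$ (and $e^{-(p_1^2+p_2^2)}$). Thus for every $\ell$ the series $\sum \|\partial_x^\ell W(t_0-p_2/q_2)\varphi(\cdot-p_1/q_1)\|_{\cdot}$ converges — working in, say, $e^{\pm x}$-weighted $L^2$ as in Lemma \ref{linbuplema} and then invoking Sobolev embedding on each compact set — so $W(t_0)u_0\in H^\ell_{\mathrm{loc}}$ for all $\ell$, i.e. $W(t_0)u_0\in C^\infty(\mathbb{R})$. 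That establishes the first line and completes the proof.
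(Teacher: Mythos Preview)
Your proposal is correct and follows essentially the same approach as the paper: both arguments apply Lemma \ref{linbuplema} term by term, use the super-exponential weights $e^{-e^{(q_1+q_2)}}e^{-(p_1^2+p_2^2)}$ to absorb the $|\tau|^{-\ell/2}e^{|\tau|}$ growth, invoke the Diophantine bound \eqref{generic2} to control $|t_0-p_2/q_2|^{-\ell/2}$ polynomially for generic irrational $t_0$, and for rational $t=a/b$ split off the singular subseries $(p_2,q_2)=(a,b)$ to exhibit the failure of $C^{j+1}$ at each $x=p_1/q_1$. Your write-up is in fact slightly more explicit than the paper's about why the remaining ``smooth'' sums actually converge in $C^\ell$ for every $\ell$.
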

\begin{proof} Without loss of generality, one can assume that $j$ is odd.
  
Firstly,  according to Lemma  \ref{linbuplema} and \eqref{initval000}, we get
	\begin{align}
		\left\|\partial^{\ell}_{x} e^{-x}u_0\right\|_{L^2}
		\lesssim &\sum_{\substack{p_2,q_2\in\mathbb{Z}^+,\\ gcd(p_2,q_2)=1}} \sum_{\substack{p_1,q_1\in\mathbb{Z}^+,\\ gcd(p_1,q_1)=1}}e^{-e^{q_2}}e^{-p_2^2}  \left\|\partial^{\ell}_{x} e^{-x}W\big(-\frac{p_2}{q_2}\big)\varphi\big(x-\frac{p_1}{q_1}\big)\right\|_{L^2} \notag\\
		\lesssim & \sum_{\substack{p_2,q_2\in\mathbb{Z}^+,\\ gcd(p_2,q_2)=1}} \sum_{\substack{p_1,q_1\in\mathbb{Z}^+,\\ gcd(p_1,q_1)=1}}e^{-e^{q_2}}e^{-p_2^2}  p_2^{-\frac{\ell}{2}}q_2^{\frac{\ell}{2}} e^{\frac{p_2}{q_2}}e^{-\frac{p_1}{q_1}} \left\|e^{-x}\varphi\right\|_{L^2}  <\infty. \nonumber
	\end{align}
By Sobolev embedding theorem, we see that
$u_0\in C^{\infty}	(\mathbb{R})$.

Secondly,
\begin{align}
W(t)	u_0=\sum_{\substack{p_2,q_2\in\mathbb{Z}^+,\\ gcd(p_2,q_2)=1}} \sum_{\substack{p_1,q_1\in\mathbb{Z}^+,\\ gcd(p_1,q_1)=1}}e^{-e^{(q_1+q_2)}}e^{-(p_1^2+p_2^2)} W\big(t-\frac{p_2}{q_2}\big)\varphi\big(x-\frac{p_1}{q_1}\big),\nonumber
	\end{align}
using the same argument as above, one  obtains that
  $W(t)u_0\in C^{\infty}(\mathbb{R})$ for $t\in \mathbb{R}\setminus\mathbb{Q}^*$ and $t>0$. 

Here is the reason why we introduce generic irrational number (see Definition \ref{gene-irrat}). From \eqref{linbuplema1}, we use $|t-\frac{p_2}{q_2}|^{-\frac{\ell}{2}}$ to control $\left\|\partial^{\ell}_{x} e^{-x}W(t-\frac{p_2}{q_2})u_0\right\|_{L^2}$. If $t$ is a irrational number that is quickly approximated by rational numbers, then $|t-\frac{p_2}{q_2}|^{-\frac{\ell}{2}}$ is of singularity. However, this  will not occur for generic irrational number. Because,  one has
$$\big|t-\frac{p_2}{q_2}\big|^{-\frac{\ell}{2}}\lesssim (|p_2|+|q_2|)^{\frac{3\ell}{2}} $$
from \eqref{generic2}.

Finally,  for $t=p/q\in\mathbb{Q}^+$,
	\begin{align}
	W\big(\frac{p}{q}\big)u_0=&\mathop{\sum}_{(p_2,q_2)\neq(p,q)} \sum_{(p_1,q_1)}e^{-e^{(q_1+q_2)}}e^{-(p_1^2+p_2^2)} W\big(\frac{p}{q}-\frac{p_2}{q_2}\big)\varphi\big(x-\frac{p_1}{q_1}\big) \notag \\
 &+ \sum_{(p_1,q_1)}e^{-e^{(q_1+q)}}e^{-(p_1^2+p^2)} \varphi\big(x-\frac{p_1}{q_1}\big).	 \label{rationaltime} 	
\end{align}
The first summation on right-hand side of \eqref{rationaltime} is in $C^\infty(\mathbb{R})$, but the second summation is in $C^{\infty}(\mathbb{R}\backslash \mathbb{Q}^+)\backslash C^{j+1}(\mathbb{R})$.  Hence, $W\big(\frac{p}{q}\big)u_0\in C^{\infty}(\mathbb{R}\backslash \mathbb{Q}^+)\backslash C^{j+1}(\mathbb{R})$.
\end{proof}

\subsection{Nonlinear smoothing}\label{NonlSmo}
This subsection devotes to show Theorem \ref{dispersiveblow-upsolution}, Theorem \ref{ghKdVDuhamelkg2}  and Theorem \ref{DoNotProg}.

\subsubsection{Nonlinear smoothing for $k=1$}
\textbf{Proof of Theorem \ref{dispersiveblow-upsolution}. }
We will show that the Duhamel term defined as
 $$ z_1(t)=\int^t_0 W(t-t')(u\partial^j_x u)(t')\mathrm{d}t'$$ 
belongs to $H^{j+\frac32+}(\mathbb{R})$
for initial value 
\begin{align}
u_0\in \bigcap\limits_{s\in \left[j+1,j+\frac32\right)} Z_{s,r}\nonumber
\end{align}
where $0<r<1$. This  implies in particular $z_1(t)\in C^{j+1}(\mathbb{R})$ by Sobolev embedding theorem.

Now we begin to estimate $\Big\|D_x^{j+\frac32+}z_1\Big\|_{L^2_{xy}}$. Applying the dual version of the smoothing effect \eqref{linear estimate:Kato-2a} and Cauchy-Schwarz inequality, we have
\begin{align}
	\left\|D_x^{j+\frac32+}\int^t_0 W(t-t')u\partial^j_x udt'\right\|_{L^2_{x}}
	\lesssim&  \big\|D_x^{\frac{3}{2}+}u\partial^j_x u\big\|_{L^1_xL^2_{T}}\notag \\
	\lesssim& \big\|uD_x^{\frac{3}{2}+} \partial^j_x u\big\|_{L^1_xL^2_{T}}+\big\|[D_x^{\frac{3}{2}+},u]\partial^j_x u\big\|_{L^1_xL^2_{T}}
:= T_1+T_2.	\nonumber
\end{align}

\noindent
$\bullet$ {\bf Estimate for  $T_1$.}  
\vspace{1mm}

By H\"older's inequality,  we have
\begin{align}
T_1= \big\|uD_x^{\frac{3}{2}+} \partial^j_x u\big\|_{L^1_xL^2_{T}}\lesssim \|u\|_{L^{6/5}_xL^3_{T}}\big\|D_x^{\frac{3}{2}+} \partial^j_x u\big\|_{L^6_xL^6_{T}}.\label{nonliT1a0}
\end{align}

It follows from Duhamel's principle and Strichartz estimate \eqref{ineq:strichartz1} with $p=q=6$, $\theta=2/3$ and $\tilde{p}=2$, $\tilde{q}=\infty$ that
\begin{align}
\big\|D_x^{\frac{3}{2}+} \partial^j_x u\big\|_{L^6_xL^6_{T}}&\lesssim \big\|D_x^{\frac{3}{2}+} \partial^j_x W(t)u_0\big\|_{L^6_xL^6_{T}}+\left\| D_x^{\frac{3}{2}+} \partial^j_x \int_0^tW(t-t')(u \partial^j_x u)(t')dt'\right\|_{L^6_xL^6_{T}}\notag \\
&\lesssim \|u_0\|_{H^{s}}+\int_0^T\big\|J^{j+\frac{3}{2}-\frac{2j-1}{6}+}(u\partial^j_x u)\big\|_{L^2_{x}}dt\notag \\
&\lesssim \|u_0\|_{H^{s}}+\|u\|^2_{X_T}.\label{nonliT1a1}
\end{align}
The veracity of last step in \eqref{nonliT1a1} follows readily from local well-posedness results.

By using H{\"o}lder's inequality, Sobolev's inequality and  interpolation inequality  \eqref{lem:interpolaion2}, we deduce
\begin{align}
\|u\|_{L^{6/5}_xL^3_{T}} \lesssim  \|\langle x\rangle^{\frac{1}{2}+}u\|_{L^{3}_{xT}}&\lesssim \big\| \|\langle x\rangle^{\frac{1}{2}+}u\|_{L^{3}_{x}}\big\|_{L^{3}_{T}}\notag \\
&\lesssim \big\| \|J^{1/6}\langle x\rangle^{\frac{1}{2}+}u\|_{L^{2}_{x}}\big\|_{L^{3}_{T}}\notag \\
&\lesssim T^{1/3} \|J^{\frac{1}{6\gamma}}u\|^{\gamma}_{L^{\infty}_{T}L^{2}_{x}}\|\langle x\rangle^{\frac{1}{2(1-\gamma)}+}u\|^{1-\gamma}_{L^{\infty}_{T}L^{2}_{x}}\notag \\
&\lesssim T^{1/3} \|u\|_{X_T}    \label{nonliT1a2}
\end{align}
via taking $\gamma=\frac{1}{6(j+1)}$ such that $\frac{1}{6\gamma}=j+1\leq s$ and $\frac{1}{2(1-\gamma)}=\frac{3(j+1)}{6j+5}<\frac{j+1}{2j}$.

Collecting \eqref{nonliT1a0}-\eqref{nonliT1a2}, we get
\begin{align}
T_1 \lesssim T^{1/3} \|u\|_{X_T} \left(\|u_0\|_{H^{s_0}}+\|u\|^2_{X_T}\right)<\infty. \label{nonliT1aEnd}
\end{align}

\noindent
$\bullet$ {\bf Estimate for  $T_2$.}  
\vspace{1mm}

Applying H{\"o}lder's inequality and weighted Kato-Ponce inequality \eqref{Kato-Ponce-weight1}, one has
\begin{align}
&T_2=\big\|[D_x^{\frac32+},u]\partial^j_x u\big\|_{L^1_xL^2_{T}} \notag \\
\leq& \big\|\langle x\rangle^{\frac{1}{2}+}[D_x^{\frac32+},u]\partial^j_x u\big\|_{L^2_{xT}}	\notag \\
  \lesssim & T^{\frac{1}{4}}\big\| \langle x\rangle^{\frac{\tilde{p}-2}{2\tilde{p}}+} D_x^{\frac32+}u \big\|_{L^{2\tilde{p}}_{T}L^{\frac{2\tilde{p}}{\tilde{p}-2}}_x} \big\|\langle x\rangle^{\frac{1}{\tilde{p}}+} \partial^j_x u\big\|_{L^{\tilde{q}}_{T}L^{\tilde{p}}_{x}}\notag \\
&+ T^{\frac{1}{4}}\big\| \langle x\rangle^{\frac{p-2}{2p}+}\partial_x u \big\|_{L^{2p}_{T}L^{\frac{2p}{p-2}}_x} \big\|\langle x\rangle^{\frac{1}{p}+} D_x^{\frac12+}\partial^j_x u\big\|_{L^q_{T}L^p_{x}},
\label{T2fraaa}
   \end{align}
where $(p,q)$ and $(\tilde{p},\tilde{q})$ are Strichartz pairs satisfying $4/q+2/p=4/\tilde{q}+2/\tilde{p}=1$ and $2<p, \ \tilde{p}<\infty$. We only estimate the second term in the right-hand side of \eqref{T2fraaa}, as  the first term can be dealt with in a similar way.

\vspace{1mm}
Let us now estimate $\big\| \langle x\rangle^{\frac{1}{p}+} D_x^{\frac1 2+}\partial^j_x u\big\|_{L^q_{T}L^p_x}$. Using interpolation inequality  \eqref{lem:interpolaion1} derivatives
\begin{equation*}
	\begin{aligned}
		\big\|\langle x\rangle^{\frac{1}{p}+} D_x^{\frac1 2+}\partial^j_xu\big\|_{L^p_{x}}
		\lesssim & \left\|\langle x\rangle^{\frac{1}{p\beta}+} u\right\|_{L^p_{x}}^\beta 
\left\|D_x^{\frac{(2j+1)}{2(1-\beta)}+}u\right\|_{L^p_{x}}^{1-\beta}\notag \\
		\lesssim  &\left\|\langle x\rangle^{\frac{1}{p\beta}+} u\right\|_{L^p_{x}}+
\left\|D_x^{\frac{(2j+1)}{2(1-\beta)}+}u\right\|_{L^p_{x}}.
	\end{aligned}
\end{equation*}
Hence,
	\begin{align}
	\big\|\langle x\rangle^{\frac{1}{p}+} D_x^{\frac1 2+}\partial^j_xu\big\|_{L^{q}_{T}L^p_{x}}
		\lesssim 
\left\|\langle x\rangle^{\frac{1}{p\beta}+} u\right\|_{L^{q}_{T}L^p_{x}}+
\left\|D_x^{\frac{(2j+1)}{2(1-\beta)}+}u\right\|_{L^{q}_{T}L^p_{x}}.\label{T2a1aa}
	\end{align}

For the first term, we use Sobolev's inequality and interpolation inequality 
	\begin{align}
		\big\|\langle x\rangle^{\frac{1}{p\beta}+} u\big\|_{L^p_{x}}
		&\lesssim \big\|J^{\frac12-\frac{1}{p}}\langle x\rangle^{\frac{1}{p\beta}+} u\big\|_{L^2_{x}} \notag\\
		&\lesssim \big\|J^{\frac{p-2}{2p(1-\sigma)}} u\big\|_{L^2_{x}}^{1-\sigma} \big\|\langle x\rangle^{\frac{1}{p\beta\sigma}+} u\big\|_{L^2_{x}}^{\sigma}\notag\\
		&\lesssim \big\|J^{\frac{p-2}{2p(1-\sigma)}} u\big\|_{L^2_{x}}+ \big\|\langle x\rangle^{\frac{1}{p\beta\sigma}+} u\big\|_{L^2_{x}}.\label{T2a1a}
	\end{align}
Put 
\begin{align}
\beta= \frac{4p(s-1)-(p+2)(2j-1)}{4ps+(p-2)(2j-1)}-, \hspace{5mm} \sigma= \frac{2j}{p\beta(j+1)}+\label{nonlinindex0}
\end{align}
such that 
\begin{align}
\frac{(2j+1)}{2(1-\beta)}+= s+\frac{(p-2)(2j-1)}{4p},   \hspace{7mm} \frac{1}{p\beta\sigma}=\frac{j+1}{2j}-=r-<\frac{s}{2j}. \nonumber\end{align}
By choosing $p$ large enough, we see from \eqref{nonlinindex0} that 
$$\frac{1}{8}<\beta<1, \hspace{3mm} \text{and} \hspace{3mm} 0<\sigma<\frac{1}{10}.$$
Hence,  $\frac{p-2}{2p(1-\sigma)}<\frac{5}{9}$, then by \eqref{T2a1a} one gets
	\begin{align}
		\big\|\langle x\rangle^{\frac{1}{p\beta}+} u\big\|_{L^{q}_{T}L^p_{x}}
		\lesssim&  T^{\frac{1}{q}}\left(\big\|J^{\frac{p-2}{2p(1-\sigma)}} u\big\|_{L^\infty_T L^2_{x}} +\big\|\langle x\rangle^{\frac{1}{p\beta\sigma}+} u\big\|_{L^\infty_T L^2_{x}}\right) \notag\\
	\lesssim&  T^{\frac{1}{q}}\left(\| u\|_{L^\infty_T H^{5/9}_{x}} +\big\|\langle x\rangle^{r} u\big\|_{L^\infty_T L^2_{x}}\right) 
\lesssim T^{\frac{1}{q}}\|u\|_{X_T}. \label{T2a1b}
	\end{align}

For the second term in the right hand side of \eqref{T2a1aa}, by using Lemma \ref{linear estimate:strichartz}  with $\theta=\frac{p-2}{p}$,   we derive 
\begin{align}
	&\left\|D_x^{\frac{(2j+1)}{2(1-\beta)}+}u\right\|_{L^{q}_{T}L^p_{x}}=\big\|D_x^{s+\frac{\theta(2j-1)}{4}}u\big\|_{L^{q}_{T}L^p_{x}} \notag\\
	\lesssim& 	\big\|D_x^{s+\frac{\theta(2j-1)}{4}}W(t)u_0\big\|_{L^{q}_{T}L^p_{x}}+\left\|D_x^{s+\frac{\theta(2j-1)}{4}}\int_0^T W(t-\tau)(u\partial^j_x u)\mathrm{d}\tau\right\|_{L^{q}_{T}L^p_{x}}\notag\\
	\lesssim & \big\|D_x^{s} u_0\big\|_{L^2_{x}}+\int_0^T\big\|D_x^{s} 
(u\partial^j_x u)\big\|_{L^2_{x}}dt\notag\\
 	\lesssim  &\| u_0\|_{H^{s}}+\|u\|_{X_T}^2.\label{T2a1c}
\end{align}

Combining \eqref{T2a1aa}, \eqref{T2a1b} and \eqref{T2a1c},  we obtain 
	\begin{align}
		\big\|\langle x\rangle^{\frac{1}{p}+} D_x^{\frac1 2+}\partial^j_xu\big\|_{L^{q}_{T}L^p_{x}}
		\lesssim_T \| u_0\|_{H^{s}}+\|u\|_{X_T}+\|u\|_{X_T}^2<\infty.\label{T2aa1a1A}
	\end{align}

Additionally,  from  interpolation inequality  \eqref{lem:interpolaion1} and  Sobolev's inequality, one sees that 
	\begin{align}
&\big\| \langle x\rangle^{\frac{p-2}{2p}+} \partial_x u \big\|_{L^{2p}_{T}L^{\frac{2p}{p-2}}_x}\notag \\
\lesssim&  \big\| \langle x\rangle^{\frac{p-2}{2p\alpha}+} u \big\|^{\alpha}_{L^{2p}_{T}L^{\frac{2p}{p-2}}_x} 
\big\|  D_x^{\frac{1}{1-\alpha}}u \big\|^{1-\alpha}_{L^{2p}_{T}L^{\frac{2p}{p-2}}_x}\notag \\
\lesssim&  \big\| \langle x\rangle^{\frac{p-2}{2p\alpha}+} u \big\|_{L^{2p}_{T}L^{\frac{2p}{p-2}}_x}+ 
\big\|  D_x^{\frac{1}{1-\alpha}}u \big\|_{L^{2p}_{T}L^{\frac{2p}{p-2}}_x}\notag \\
\lesssim&  T^{1/2p}\big\| J^{\frac{1}{p}}\langle x\rangle^{\frac{p-2}{2p\alpha}+} u \big\|_{L^{\infty}_{T}L^{2}_x}+ 
\big\|  D_x^{\frac{1}{1-\alpha}}u \big\|_{L^{2p}_{T}L^{\frac{2p}{p-2}}_x}.\label{T2bb2b1}
	\end{align}
Using again  interpolation inequality deduces that
	\begin{align}
\big\| J^{\frac{1}{p}}\langle x\rangle^{\frac{p-2}{2p\alpha}+} u \big\|_{L^{\infty}_{T}L^{2}_x}\lesssim  \big\| J^{\frac{1}{p(1-\tilde{\alpha})}} u \big\|_{L^{\infty}_{T}L^{2}_x}+\big\| \langle x\rangle^{\frac{p-2}{2p\alpha\tilde{\alpha}}+} u \big\|_{L^{\infty}_{T}L^{2}_x}.\label{T2bb2b1a}
	\end{align}

By choosing $\alpha=1-\frac{2p}{2ps+2j-1}$ and $\tilde{\alpha}=\frac{(p-2)j}{p\alpha(j+1)}+$, such that 
$$ \frac{1}{p(1-\tilde{\alpha})}\leq s, \hspace{5mm} 
\frac{p-2}{2p\alpha\tilde{\alpha}}=\frac{j+1}{2j}-=r-, \hspace{3mm} \text{and} \hspace{3mm} \frac{1}{1-\alpha}=s+\frac{2j-1}{2p},$$ then we have
	\begin{align}
 \big\| J^{\frac{1}{p(1-\tilde{\alpha})}} u \big\|_{L^{\infty}_{T}L^{2}_x}
<\|u\|_{L^{\infty}_{T}H^s}, \hspace{5mm}
\big\| \langle x\rangle^{\frac{p-2}{2p\alpha\tilde{\alpha}}+} u \big\|_{L^{\infty}_{T}L^{2}_x}<\|u\|_{X_T}, \label{T2bb2b2}
	\end{align}
and 
	\begin{align}
\big\|  D_x^{\frac{1}{1-\alpha}}u \big\|_{L^{2p}_{T}L^{\frac{2p}{p-2}}_x}=\big\|  D_x^{s+\frac{2j-1}{2p}}u \big\|_{L^{2p}_{T}L^{\frac{2p}{p-2}}_x}\lesssim  &\| u_0\|_{H^{s}}+\|u\|_{X_T}^2.\label{T2bb2b3}
	\end{align}
The last inequality above is from Strichartz  estimate and Duhamel's principle, see also \eqref{T2a1c}.

So,  \eqref{T2bb2b1}-\eqref{T2bb2b3} yield 
\begin{align}
		\big\| \langle x\rangle^{\frac{p-2}{2p}+} \partial_x u \big\|_{L^{2p}_{T}L^{\frac{2p}{p-2}}_x}
		\lesssim_T \| u_0\|_{H^{s}}+\|u\|_{X_T}+\|u\|_{X_T}^2<\infty.\label{T2bb2bBBB}
	\end{align}
Therefore, we deduce from  \eqref{T2fraaa}, \eqref{T2aa1a1A} and \eqref{T2bb2bBBB} that
\begin{align}
T_2 \lesssim_T   \|u_0\|^2_{H^{s}}+\|u\|^2_{X_T}+\|u\|^4_{X_T} <\infty. \label{nonliT1aEnd}
\end{align}
We finish the proof of Theorem \ref{dispersiveblow-upsolution}.\hspace{86mm}$\square$

\subsubsection{Nonlinear smoothing for $k\geq2$}

\textbf{Proof of Theorem \ref{ghKdVDuhamelkg2}. } 
The arguments utilized to show this theorem  are
the local smoothing effect and maximal function estimates. We only deal with the case $s=2$, because  techniques we used here are applicable to larger $s$. Observe that $z_k\in L^2(\mathbb{R}^2)$, it suffices to control the $L^2$ norms of 
$$\partial_x^{s+j}\int_0^t W(t-t')u^k\partial^j_xu(t')dt' .$$ 

Applying ﻿the dual version of the smoothing effect \eqref{linear estimate:Kato-2} and H\"older's inequality deduce
\begin{align}
 &\left\|\partial_x^{s+j}\int_0^T W(t-t')u^k\partial^j_xu(t')dt' \right\|_{L^2_{x}} 
\lesssim   \big\|\partial_x^{s}(u^k\partial^j_xu)\big\|_{L^1_xL^2_{T}} \notag \\
\lesssim & \|u^{k}\partial_x^{s+j}u\|_{L^1_xL^2_{T}}+\|\partial_x^{j}u \partial_x^{s}u^{k}\|_{L^1_xL^2_{T}}+\sum_{1\leq\ell\leq s-1}\big\|\big(\partial_x^{s-\ell}u^k\big)\big(\partial_x^{j+\ell}u\big)\big\|_{L^1_xL^2_{T}} \notag \\
=:&T_1+T_2+T_3.\nonumber
\end{align}
In the next place, we only consider $T_1$ and $T_2$,  as  $T_3$ can be estimated by using the same argument.

It follows from H\"older's inequality and Sobolev embedding that
\begin{align}
T_1=\|u^{k}\partial_x^{s+j}u\|_{L^1_xL^2_{T}}&\leq \|u^{k}\|_{L^1_xL^{\infty}_{T}}\|\partial_x^{s+j}u\|_{L^{\infty}_xL^2_{T}} \notag \\
&\leq \|u\|^{k}_{L^{k}_xL^{\infty}_{T}}\|\partial_x^{s+j}u\|_{L^{\infty}_xL^2_{T}}\notag \\
&\leq \|J^{1/2-1/k}u\|^{k}_{L^{2}_xL^{\infty}_{T}}\|\partial_x^{s+j}u\|_{L^{\infty}_xL^2_{T}}\notag \\
&\leq \|u\|^{k}_{L^{\infty}_{T}H^{s}_x}\|\partial_x^{s+j}u\|_{L^{\infty}_xL^2_{T}}\nonumber
\end{align}
which is bounded by local well-posedness theory.

For $T_2$, we only estimate the term  $\|u^{k-1}\partial_x^{s}u \partial_x^{j}u\|_{L^1_xL^2_{T}}$, because other terms can be dealt with in a similar way.  By using H\"older's inequality, Sobolev embedding and local well-posedness theory,  we obtain
\begin{align}
\|u^{k-1}\partial_x^{s}u \partial_x^{j}u\|_{L^1_xL^2_{T}}&\leq \|u^{k-1}\|_{L^{2}_xL^{\infty}_{T}}
\|\partial_x^{s}u\|_{L^{4}_xL^4_{T}}\|\partial_x^{j}u\|_{L^{4}_xL^4_{T}} \notag \\
&\leq \|u\|^{k-1}_{L^{2(k-1)}_xL^{\infty}_{T}}
\|\partial_x^{s}u\|_{L^4_{xT}}\|\partial_x^{j}u\|_{L^4_{xT}} \notag \\
&\leq T^{1/4}\|J^{\frac{1}{2}-\frac{1}{2(k-1)}}u\|^{k-1}_{L^{2}_xL^{\infty}_{T}}\|\partial_x^{s}u\|_{L^{8}_tL^4_{x}}\|\partial_x^{j}u\|_{L^{8}_tL^4_{x}} <\infty. \nonumber
\end{align}
Therefore, we finish the proof. \hspace{93.5mm}$\square$

In the end,  we show that the singularities of solutions to the higher KdV equation \eqref{hKdV} do not propagate in any direction.

\textbf{Proof of Theorem \ref{DoNotProg}. } From Theorem \ref{ghKdVDuhamelkg2}, we see that the nonlinear part of the solution is in $H^{2j+1}(\mathbb{R})$ which yields that 
$z_k(t)\in W^{j+1,p}(\mathbb{R})$ for any $p>2$ by  Sobolev embedding theorem.

Choosing $\phi \in H^{j+1}(\mathbb{R})\cap W^{j+1,1}(\mathbb{R})$ and $\phi \notin  W^{j+1,p}(\mathbb{R})$ for every $p>2$, by using the dispersive estimate \eqref{hKdVdispEs}, we have
$$ \|W(t)\partial_x^{j+1} \phi\|_{L_x^{\infty}}\lesssim |t|^{-1/2}\|\partial_x^{(2j+1)/4} \phi\|_{L_x^{1}}$$
which further implies $W(t)\phi\in W^{j+1,p}(\mathbb{R})$ for any $p\geq2$ via an interpolation with 
$$ \|W(t)\partial_x^{j+1} \phi\|_{L_x^{2}}=\|\partial_x^{j+1} \phi\|_{L_x^{2}}.$$

Put $u_0(x)=c_0W(-t^*)\phi(x)$ with $0<c_0\ll1$  such that the corresponding solution $u(t)\in C(\mathbb{R};  H^{j+1}(\mathbb{R}))$  is global in time. It is easy to verify $u_0(x)\in H^{j+1}(\mathbb{R})\cap W^{j+1,p}(\mathbb{R})$ with $W(t^*)u_0(x)=\phi(x)\in H^{j+1}(\mathbb{R})$, but $W(t^*)u_0(x)\notin W^{j+1,p}(\mathbb{R})$ for every $p>2$.

Now we prove the second part. It follows from Strichartz estimate (see Lemma \ref{linear estimate:strichartz}) that
\begin{align}
\big\|D_x^{\frac{(p-2)(2j-1)}{4p}}W(t)u_0\big\|_{L^{\frac{4p}{p-2}}_tL^p_{x}}\lesssim \|u_0\|_{L^2_x}.
\label{DoNotProg1}
\end{align}
Taking $\widetilde{\phi}(x)\in H^{j+1}(\mathbb{R})$ and $\widetilde{\phi}(x)\notin W^{r,p}(\mathbb{R}_{+})$ with $r=\frac{(p-2)(2j-1)}{4p}+j+1$, then by \eqref{DoNotProg1} we know that
there exists $t_0>0$ such that
\begin{align}W(\pm t_0)\widetilde{\phi} \in W^{r,p}(\mathbb{R}) \hspace{5mm}\text{and}  \hspace{5mm} W(\pm 2t_0)\widetilde{\phi} \in W^{r,p}(\mathbb{R}).\label{DoNotProg2}
\end{align}

Choosing $u_0=W(t_0)\widetilde{\phi} +W(- t_0)\widetilde{\phi}$ ( multiplying $u_0$ by a small constant, if necessary), then it is easy to see that $u_0\in H^{j+1}(\mathbb{R})\cap W^{r,p}(\mathbb{R})$. The linear part of the global solution with small initial data $u_0$ is
$$W(t)u_0=W(t+t_0)\widetilde{\phi} +W(t- t_0)\widetilde{\phi}$$
which is not in $W^{r,p}(\mathbb{R}_{+})$ at time $t_0$ from \eqref{DoNotProg2} and $\widetilde{\phi}(x)\notin W^{r,p}(\mathbb{R}_{+})$. The same holds for $W(t_0)u_0$.

However, the nonlinear part of the global solution is much more smooth than the linear component by Theorem \ref{ghKdVDuhamelkg2}. Thus, we derive the desired result.  \hspace{37.5mm}$\square$

\section*{Acknowledgments}
M.S is partially supported by the National Natural Science Foundation of China, Grant No.  12101629.


\footnotesize

\end{document}